\newtheorem{thm}{Theorem}[section]
\newtheorem{cor}[thm]{Corollary}
\newtheorem{lem}[thm]{Lemma}
\newtheorem{prop}[thm]{Proposition}
\newtheorem{ex}[thm]{Example}
\theoremstyle{definition}
\newtheorem{defn}[thm]{Definition}
\theoremstyle{remark}
\newtheorem{rem}[thm]{Remark}
\numberwithin{equation}{section}
\newcommand{\abs}[1]{\left\vert#1\right\vert}
\newcommand{\eps}{\varepsilon}
\patchcmd{\section}{\scshape}{\bfseries}{}{}
\renewcommand{\@secnumfont}{\bfseries}
\newcounter{nameOfYourChoice}
\begin{document}

%\title[]{On the eigenvalues of the Aharonov-Bohm operator with a small hole around the pole and with pole moving to the boundary}%

\title[]{Geometric bounds for the Magnetic Neumann eigenvalues in the plane}%

\author{Bruno Colbois}
\author{Corentin Léna}
\address{}
\author{Luigi Provenzano}
\author{Alessandro Savo}

\address{Bruno Colbois, Université de Neuch\^atel, Institute de Mathémathiques, Rue Emile Argand 11, 2000 Neuch\^atel, Switzerland}
\email{bruno.colbois@unine.ch}
\address{Corentin Léna, Università degli Studi di Padova, Dipartimento di Tecnica e Gestione dei Sistemi Industriali (DTG), Stradella S. Nicola 3, 36100 Vicenza, Italy}
\email{corentin.lena@unipd.it}
\address{Luigi Provenzano, Sapienza Università  di Roma, Dipartimento di Scienze di Base e Applicate per l'Ingegneria (SBAI), Via Antonio Scarpa 16, 00161 Roma, Italy}
\email{luigi.provenzano@uniroma1.it}
\address{Alessandro Savo, Sapienza Università  di Roma, Dipartimento di Scienze di Base e Applicate per l'Ingegneria (SBAI), Via Antonio Scarpa 16, 00161 Roma, Italy}
\email{alessandro.savo@uniroma1.it}

\thanks{The first author acknowledges support of the SNSF project ‘Geometric Spectral Theory’, grant number 200021-19689. The third and fourth author acknowledge support of the Gruppo Nazionale per le Strutture Algebriche, Geometriche e le loro Applicazioni (GNSAGA) of the I\-sti\-tuto Naziona\-le di Alta Matematica (INdAM)}

\keywords{Magnetic Laplacian, constant field, Neumann eigenvalues, upper and lower bounds, semiclassical estimates}

\subjclass[2020]{35P15, 35J25, 81Q10}

%\author{Bruno Colbois}%
%\address{Bruno Colbois, Universit\'e de Neuch\^atel, Institute de Math\'ematiques, Rue Emile Argand 11, 2000 Neuch\^atel, Switzerland}%
%\email{bruno.colbois@unine.ch}%
%\author{Luigi Provenzano}%
%\address{Luigi Provenzano, Universit\`a degli Studi di Padova, Dipartimento di Matematica, Via Trieste 63, 35121 Padova, Italy}%
%\email{luigi.provenzano@math.unipd.it}%
%\thanks{}%
%\subjclass[2010]{35P15; 35J30, 58J50}%
%\keywords{Steklov problems, higher order operator, eigenvalue bounds}

\date{}%
%\dedicatory{}%
%\commby{}%
% ----------------------------------------------------------------

\begin{abstract}
We consider the eigenvalues of the magnetic Laplacian on a bounded domain $\Omega$  of $\mathbb R^2$ with uniform magnetic field $\beta>0$ and magnetic Neumann boundary conditions. We find upper and lower bounds for the ground state energy $\lambda_1$ and we provide semiclassical estimates in the spirit of Kr\"oger for the first Riesz mean of the eigenvalues. We also discuss upper bounds for the first eigenvalue for non-constant magnetic fields $\beta=\beta(x)$ on a simply connected domain in a Riemannian surface.\\
In particular: we prove the upper bound $\lambda_1<\beta$ for a general plane domain, and the upper bound $\lambda_1<\sup_{x\in\Omega}\abs{\beta(x)}$ for a variable magnetic field when $\Omega$ is simply connected. \\
For smooth domains, we prove a lower bound of $\lambda_1$ depending only on the intensity of the magnetic field $\beta$ and the rolling radius of the domain.
\\The estimates on the Riesz mean imply an upper bound for the averages of the first $k$ eigenvalues which is sharp when $k\to\infty$ and consists of the semiclassical limit $\dfrac{2\pi k}{\abs{\Omega}}$ plus an oscillating term. \\
We also construct several examples, showing the importance of the topology: in particular we show that an arbitrarily small tubular neighborhood of a generic simple closed curve has lowest eigenvalue bounded away from zero, contrary to the case of a simply connected domain of small area, for which $\lambda_1$ is always small. 
\end{abstract}

\maketitle

%\bfseries
\tableofcontents
%\normalfont
% ----------------------------------------------------------------

%%%%%%%%%%%%%%%%%%%%%%%%%%%%%%%%%%%%%%%%%%%%%%%%%%%%%%%%%%%%%%%%%%%%%%%%%%%%%%%%%%%%%%%%%%%%%%%%%%%%%%%%%%%%%%%%%%%%%%%%%%%%%%%%%%%%%%%%
%%%%%%%%%%%%%%%%%%%%%%%%%%%%%%%%%%%%%%%%%%%%%%%%%%%%%%%%%%%%%%%%%%%%%%%%%%%%%%%%%%%%%%%%%%%%%%%%%%%%%%%%%%%%%%%%%%%%%%%%%%%%%%%%%%%%%%%%
%%%%%%%%%%%%%%%%%%%%%%%%%%%%%%%%%%%%%%%%%%%%%%%%%%%%%%%%%%%%%%%%%%%%%%%%%%%%%%%%%%%%%%%%%%%%%%%%%%%%%%%%%%%%%%%%%%%%%%%%%%%%%%%%%%%%%%%%

%%%%%%%%%%%%%%%%%%%%%%%%%%%%%%%%%%%%%%%%%%%%                INTRODUCTION                  %%%%%%%%%%%%%%%%%%%%%%%%%%%%%%%%%%%%%%%%%%%%% 

%%%%%%%%%%%%%%%%%%%%%%%%%%%%%%%%%%%%%%%%%%%%%%%%%%%%%%%%%%%%%%%%%%%%%%%%%%%%%%%%%%%%%%%%%%%%%%%%%%%%%%%%%%%%%%%%%%%%%%%%%%%%%%%%%%%%%%%%
%%%%%%%%%%%%%%%%%%%%%%%%%%%%%%%%%%%%%%%%%%%%%%%%%%%%%%%%%%%%%%%%%%%%%%%%%%%%%%%%%%%%%%%%%%%%%%%%%%%%%%%%%%%%%%%%%%%%%%%%%%%%%%%%%%%%%%%%
%%%%%%%%%%%%%%%%%%%%%%%%%%%%%%%%%%%%%%%%%%%%%%%%%%%%%%%%%%%%%%%%%%%%%%%%%%%%%%%%%%%%%%%%%%%%%%%%%%%%%%%%%%%%%%%%%%%%%%%%%%%%%%%%%%%%%%%%

%\begin{comment}

\section{Introduction}

The main scope of this paper is to derive upper and lower bounds for the eigenvalues of the magnetic Laplacian with constant magnetic field $\beta>0$ and magnetic Neumann boundary conditions on domains of $\mathbb R^2$. Specifically, we will consider the magnetic Laplacian associated with the potential $1$-form 
\begin{equation}\label{standard}
A=\frac{\beta}{2}(-x_2dx_1+x_1dx_2)
\end{equation}
which  generates the magnetic field of constant strength $\beta$, in the sense that $dA=\beta dv$ where $dv$ is the volume 2-form. Note that replacing $\beta$ by $-\beta$ does not change the spectrum, therefore it is not restrictive to consider $\beta>0$, see Subsection \ref{properties}. The eigenvalues correspond to the energy levels of a quantum charged particle in a two-dimensional region subject to a transversal magnetic field of constant strength $\beta$. It is clear that the interest in the study of the corresponding spectrum  originates in Quantum Mechanics and Mathematical Physics. We refer to the books \cite{FH_book,ray} for more detailed discussions on the topic. 

\smallskip

Nevertheless the subject has attracted a lot of attention in the last decades also in Analysis and Geometry. Relevant questions which are usually posed in these contexts include geometric bounds for the eigenvalues and isoperimetric inequalities. In the present paper we will focus on eigenvalue bounds, and in particular on how the geometry of the domain influences the eigenvalues, with particular attention to the ground state energy $\lambda_1(\Omega,\beta)$, which turns out to be positive. Concerning previous results on eigenvalue bounds for the magnetic Neumann problem, we refer to \cite{CEIS,CS1,ELMP,kovarik,FH2,mag_cheeger,LS}.

\smallskip

 In this paper the notation $\lambda_j(\Omega,\beta)$ refers to the $j$-th eigenvalue of the magnetic Laplacian with Neumann conditions and potential $A$ as in \eqref{standard}
(When $\Omega$ is not simply connected, the choice of the potential form generating the magnetic field $\beta$ may affect the spectrum, see Subsection \ref{cmf}.)

\smallskip

If we set $\beta=0$ in \eqref{standard}, that is, if $A=0$, we fall back into the case of the Neumann Laplacian, for which a huge literature on eigenvalue bounds is available. In our notation, $\lambda_1(\Omega,0)=0$, while $\lambda_2(\Omega,0)>0$ is the first positive eigenvalue of the (non-magnetic) Neumann Laplacian. On the other hand, when $\beta>0$, the magnetic spectrum, in particular $\lambda_1(\Omega,\beta)$, displays a peculiar behavior when compared  to the usual Laplacian spectrum. We will list here just a few instances in order to give a glimpse of this fact. 

$ - $ First, the behavior of the first eigenvalue under homotheties involves the strength of the magnetic field: for any $\alpha>0$ one has:
$$
\lambda_j(\alpha\Omega,\beta)=\dfrac{1}{\alpha^2}\lambda_j(\Omega,\beta\alpha^2).
$$

$-$ This and the upper bound \eqref{A0} imply that $\lambda_1(\alpha\Omega,\beta)\to 0$ as $\alpha\to 0^+$: the first eigenvalue vanishes when the domain is homothetically shrunk.

$-$ However, the first eigenvalue does not necessarily go to $0$ when $|\Omega|\to 0^+$: there exist domains with arbitrarily small area and first eigenvalue bounded away from zero (as a matter of fact, a small tubular neighborhood of a ``generic'' simple closed curve has first eigenvalue bounded away from zero, see Example \ref{annuli0}).

$-$ Still concerning homotheties, given any domain $\Omega$, we have $\lambda_1(\alpha\Omega,\beta)\to\Theta_0\beta$ as $\alpha\to +\infty$, where $\Theta_0\approx 0.590106$ is a universal constant (de Gennes constant, see \cite[Chapter 3]{FH_book}).  This implies that an arbitrarily large volume does not imply a small first eigenvalue. Moreover, note that the function $\alpha\mapsto\lambda_1(\alpha\Omega,\beta)$ is not generally increasing (see Figure \ref{F1} when $\Omega$ is a disk).

$-$ There exist convex domains with inradius bounded below by a positive constant and first eigenvalue arbitrarily small (see Example \ref{ex:tr}).

$-$ There are striking differences between the magnetic Neumann and the magnetic Dirichlet eigenvalues. 

Let us briefly comment on that. Let  $\lambda_1^D(B_R,\beta)$ denote the first magnetic Dirichlet eigenvalue on a disk of radius $R$. It is quite standard to prove that  $\lambda_1^D(B_R,\beta)$ is decreasing from $+\infty$ to $\beta$ (which is a strict and sharp lower bound) as a function of $R\in(0,+\infty)$, and that the first eigenfunction is real and radial for any $R$ (see e.g., \cite{son}). Moreover, the Faber-Krahn inequality holds for $\lambda_1^D(\Omega,\beta)$, see \cite{erdos_FK}. On the other hand, the understanding of the behavior of the first Neumann eigenvalue $\lambda_1(B_R,\beta)$ on disks as a function of $R$ is very complicated: the first eigenfunction has angular momentum which increases with $R$; the eigenvalue is uniformly bounded, vanishes as $R\to 0^+$ and presents an oscillating (i.e., non-monotonic) behavior as a function of $R$; from numerical studies it seems that $\lambda_1(B_R,\beta)<\Theta_0\beta$ for all $R$, but we have no proof of this fact at the moment. See Figure \ref{DvsN} for a plot of $\lambda_1^D(B_R,1)$ and $\lambda_1(B_R,1)$ as functions of $R$. We refer to Appendix \ref{sec:disk} for more details on the Neumann problem for disks.

\begin{figure}
\includegraphics[width=0.7\textwidth]{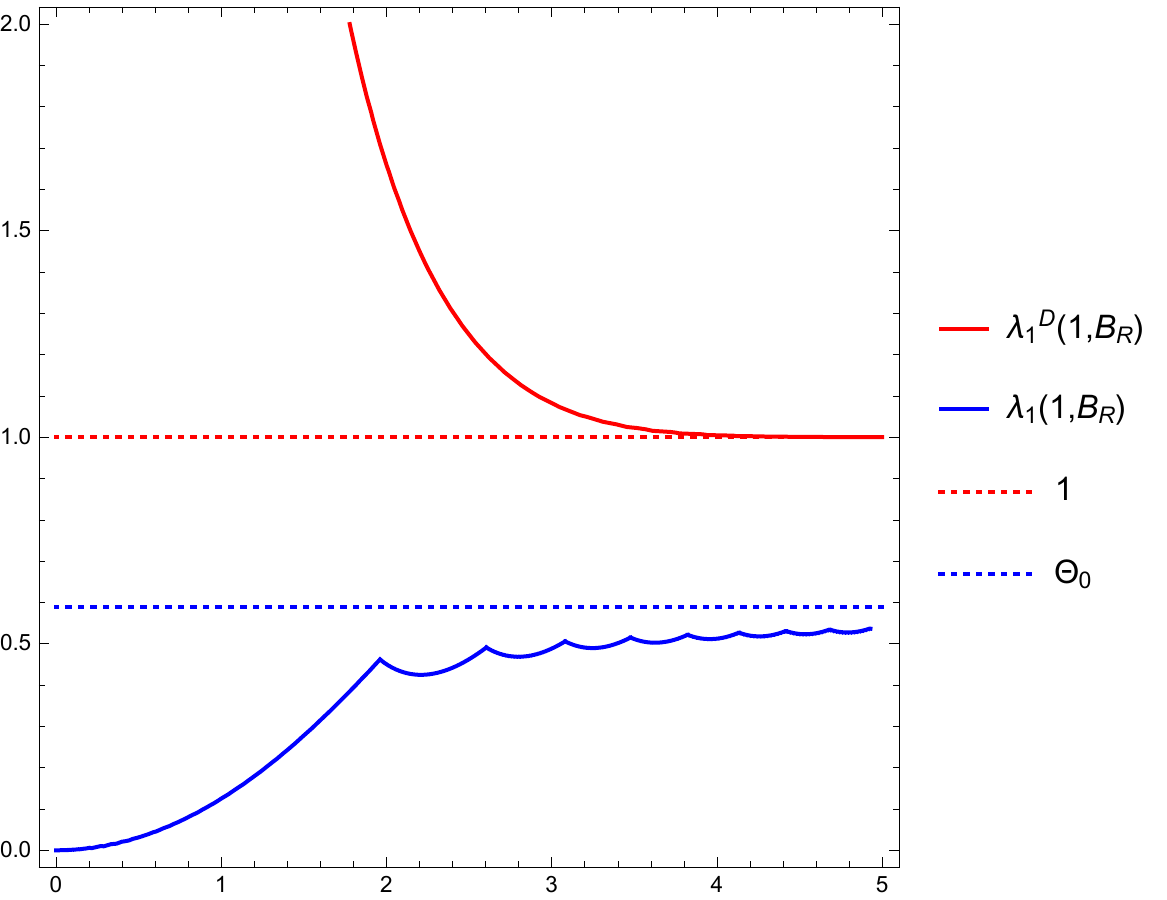}
\caption{First magnetic Dirichlet eigenvalue (in red) of a disk $B_R$ as a function of $R$; first magnetic Neumann eigenvalue (in blue) of a disk $B_R$ as a function of $R$. Here $\beta=1$.}
\label{DvsN}
\end{figure}

$ - $ Finally, we note that the reverse Faber-Krahn inequality for the first magnetic Neumann eigenvalue is still an open problem, and it definitely does not hold for multiply connected domains. In fact, in \cite{FH2} the authors show that, given an annulus $\Omega$, there exists $\beta_0(\Omega)$ such that, for any $\beta>\beta_0$ $\lambda_1(\Omega^*,\beta)<\lambda_1(\Omega,\beta)$, where $\Omega^*$ is a disk with $|\Omega^*|=|\Omega|$. Note that this is an asymptotic counterexample. More easily, from our Example \ref{annuli0} we see that, for any $\beta>0$, there exist plenty of non-simply connected domains which have larger first eigenvalue than the disk of the same volume. In fact, as already mentioned, a small tubular neighborhood (of small area) of a ``generic'' simple closed curve has first eigenvalue uniformly bounded away from zero, while a disk with the same (small) area has small first eigenvalue.

\smallskip

These few examples show that understanding which geometrical properties imply upper or lower bounds on $\lambda_1(\Omega,\beta)$ is not trivial. 

\smallskip

In the present paper we improve the known bounds in different ways.
First, we focus on the ground state energy $\lambda_1(\Omega,\beta)$, which is strictly positive for any value of $\beta>0$. We prove a universal upper bound, valid for any domain, which is strict and given by $\beta$, the intensity of the magnetic field (Theorem \ref{TA}). For certain classes of domains, which include sub-graphs and self-tiling domains, we prove that an upper bound is given by $\Theta_0\beta$ (Theorem \ref{TB}), which is optimal in view of the asymptotic behavior $\lambda_1(\Omega,\beta)=\Theta_0\beta+o(\beta)$ as $\beta\to +\infty$ (see \cite[Chapter 5]{FH_book}). We also prove a general upper bound for the first eigenvalue when the magnetic field is non constant, on simply connected Riemannian surfaces, in terms of the sup-norm of the magnetic field (see Theorem \ref{TF} and also Section \ref{sub:variable} for a discussion on the non simply connected case). 

We continue by considering lower bounds for $\lambda_1(\Omega,\beta)$. Our starting point are the lower bounds proved in \cite{kovarik} for simply connected domains, which we use to produce a new lower bound for arbitrary smooth domains in terms of $\beta$ or $\beta^2$, depending only on the rolling radius $\delta$ of the domain, see Theorem \ref{TC}.

We then consider the whole spectrum and we prove semiclassical estimates on eigenvalues averages in the spirit of Kr\"oger \cite{Kro}, which are asymptotically sharp (Theorem \ref{TD}). These estimates imply upper bounds on single eigenvalues of any order. Note that bounds on eigenvalue averages turn out to be equivalent to bounds on the first Riesz mean $R_1(z)$, in the tradition of Berezin and Li-Yau \cite{Berezin,LiYau}. Lower bounds for Riesz-means in case of variable magnetic field have been also obtained in \cite{CEIS}. Upper bounds for Riesz means already exist for the Dirichlet magnetic eigenvalues in \cite {erdos} (see also \cite{frank_magnetic}). We note that the behavior of our lower bounds on $R_1$ is significantly different from the behavior of the upper bounds for Dirichlet Riesz means \cite{erdos}, and reflects the interplay between the area of the domain, the strength of the magnetic field and the eigenvalue index.

We include three appendices, where we discuss results which are related to eigenvalue bounds, but have an interest on their own. Namely, we consider the magnetic Laplacian on embedded curves, establishing in this setting a sort of ``reverse Faber-Krahn'' inequality. We also discuss the case of disks and collect a few other examples, which are instructive in order to understand some of the difficulties in establishing precise bounds.

\smallskip

The  paper is organized as follows. In Section \ref{sec:results} we introduce the mathematical problem, fix the notation, and state our main results. In Sections \ref{sec:up} and \ref{sec:lo} we prove, respectively, the upper and the lower bounds for $\lambda_1(\Omega,\beta)$. In Section \ref{sub:variable} we prove upper bounds for the first eigenvalue in the case of variable magnetic field on a Riemannian surface. In Section \ref{sec:sums} we prove the asymptotically sharp, semiclassical estimates for Riesz means  and averages. In Appendix \ref{sec:thin} we study the eigenvalue problem obtained by restricting the magnetic potential to embedded curves, and prove an isoperimetric result. In Appendix \ref{sec:disk} we collect a few properties of magnetic eigenvalues on disks. In Appendix \ref{sub:examples} we provide further examples which help to clarify the difficulties in finding good bounds.

\section{Notation and statement of results}\label{sec:results}

\subsection{Generalities on the magnetic Laplacian}

 Let $\Omega$ be a bounded domain in $\mathbb R^2$ and let $A$ be a smooth real $1$-form.  We define the {\it magnetic differential} of a smooth complex valued  function $u$ as the complex $1$-form defined as follows:
$$
d^Au=du-iuA.
$$
The adjoint of $d^A$ is the operator $\delta^A$ acting on a $1$-form $\omega$ as $\delta^A\omega=\delta\omega+i\omega(A^{\sharp})$, where $A^{\sharp}$ is the dual vector field of $A$ and $\delta$ is the adjoint of $d$ (note that $\delta=-{\rm div}$). The {\it magnetic Laplacian} associated to the potential $A$ is then defined as
$
\Delta_Au=\delta^Ad^Au.
$
A standard calculation shows that 
$$
\Delta_Au=\Delta u+|A|^2u+2i\langle du, A \rangle+iu\,{\rm div}\,A,
$$
where ${\rm div A}=-\delta A$ is the usual divergence of the $1$-form $A$. 
Note that in this paper $\Delta$ denote the positive-definite operator
$
\Delta:=-\partial^2_{x_1x_1}-\partial^2_{x_2x_2}.
$ 
By $x$ we denote a point in $\mathbb R^2$ with Cartesian coordinates $(x_1,x_2)$. We will often use polar coordinates $(r,t)\in[0,+\infty)\times[0,2\pi]$. In particular $r=|x|=\sqrt{x_1^2+x_2^2}$.

\medskip

We will often identify, by abuse of language (and when this will not create confusion) the form $A$ with its dual vector field $A^{\sharp}$ ({\it vector potential}); the magnetic Laplacian can then be written
in the following form, often found in the literature:
$$
\Delta_A=-(\nabla-iA)^2.
$$
Dually, we define the {\it magnetic gradient} of a complex function $u$ as
$$
\nabla^Au:= \nabla u-iuA
$$
where $A$ is thought as a vector potential.
We consider the eigenvalue problem for the magnetic Laplacian with {\it magnetic Neumann conditions} in $\Omega$, namely
\begin{equation}\label{AB_N_0}
\begin{cases}
\Delta_A u=\lambda u\,, & {\rm in\ } \Omega,\\
\langle\nabla^Au,N\rangle=0\,, & {\rm on\ }\partial\Omega,
\end{cases}
\end{equation}
where $N$ is the outer unit normal to $\partial\Omega$. In the second line, one can see $\langle\nabla^Au,N\rangle$ as being the magnetic normal derivative of $u$.

\smallskip

Problem \eqref{AB_N_0} is understood in the weak sense as follows: find $u\in H^1(\Omega)$ and $\lambda\in\mathbb R$ such that
\begin{equation}\label{AB_N_0_weak}
\int_{\Omega}\langle\nabla^Au,\overline{\nabla^A\phi}\rangle=\lambda\int_{\Omega} u\overline\phi\,,\ \ \ \forall\phi\in H^1(\Omega).
\end{equation}
Here $H^1(\Omega)$ is the standard Sobolev space of complex valued functions in $L^2(\Omega)$ with weak first derivatives in $L^2(\Omega)$.

It is standard to prove that, under reasonable assumptions on $\Omega$ (e.g., $\Omega$ Lipschitz) problem \eqref{AB_N_0_weak} admits an increasing sequence of positive eigenvalues of finite multiplicity diverging to $+\infty$
$$
0<\lambda_1\leq\lambda_2\leq\cdots\leq\lambda_j\leq\cdots\nearrow+\infty.
$$
Through the rest of the paper we shall implicitly assume that $\Omega$ is a bounded domain for which the spectrum of \eqref{AB_N_0_weak} is discrete. The eigenvalues are variationally characterized as follows
\begin{equation}\label{minmax}
\lambda_j=\min_{\substack{U\in H^1(\Omega)\\{\rm dim}\,U=j}}\max_{0\ne u\in U}\frac{\int_{\Omega}|\nabla^Au|^2}{\int_{\Omega}|u|^2}.
\end{equation}
Note that $\lambda_j=\lambda_j(\Omega,A)$ normally depends on both the domain and the potential $1$-form; however we observe the well-known {\it gauge invariance} of the spectrum, according to which if we replace $A$ by $A+df$, for any smooth function $f$, the spectrum remains unchanged:
$$
\lambda_j(\Omega,A)=\lambda_j(\Omega,A+df).
$$

\subsection{Constant magnetic field}\label{cmf} In the present paper we will mainly consider the following potential $1$-form $A$ in \eqref{AB_N_0}: 
\begin{equation}\label{magnetic_potential}
A=\frac{\beta}{2} (-x_2dx_1+x_1dx_2), 
\end{equation}
which we will often call {\it standard potential};  here $\beta$ is a positive constant. Note that $dA=\beta dv$ is a constant magnetic field of strength $\beta$.  Also observe that ${\rm div}\,A=0$. Here $dv$ is the usual area element in $\mathbb R^2$.

When it is necessary for the purposes of the presentation, we shall highlight the dependence of the eigenvalues $\lambda_j$ on  $\beta$ and $\Omega$, and denote them as 
\begin{equation}\label{notation}
\lambda_j(\Omega,\beta). 
\end{equation}
In other words, the notation in \eqref{notation} refers to the eigenvalues of the magnetic Laplacian with potential form as in \eqref{magnetic_potential}. 

\smallskip

To this regard, a remark is perhaps in order here. Observe that if $\Omega$ is simply connected, then the spectrum of \eqref{AB_N_0} depends only on $\beta$ and not on the potential $A$. In fact, if $A,A'$ are two potentials such that $dA=dA'=\beta dv$, then they differ by a closed $1$-form, which is exact on $\Omega$ provided that $\Omega$ is simply connected: in that case the spectra of \eqref{AB_N_0} with $A$ and $A'$ coincide by gauge invariance.  The situation is completely different if the domain is not simply connected, in which case the spectra corresponding to $A,A'$ differing by a closed $1$-form in general may not coincide. In this case, as we have already declared, we are considering the spectrum of \eqref{AB_N_0} with $A$ defined by \eqref{magnetic_potential}.

\subsection{Upper bounds for $\lambda_1$}
We list here the main results concerning upper bounds on the ground state energy $\lambda_1(\Omega,\beta)$.

The first result is an upper bound for $\lambda_1(\Omega,\beta)$, valid for any bounded domain in $\mathbb R^2$. We present its proof in Section \ref{sec:up} (Theorem \ref{up_B_diam}).

\begin{thm}\label{TA}
Let $\Omega$ be a bounded domain in $\mathbb R^2$. Then
\begin{equation}\label{BA}
\lambda_1(\Omega,\beta)<\beta.
\end{equation}
\end{thm}
Actually, the upper bound \eqref{BA} is a consequence of a more precise bound that we establish in Theorem \ref{up_B_diam}, namely
$$
\lambda_1(\Omega,\beta)\leq
\begin{cases}
\beta-\frac{1}{2R_{\Omega}^2}\,, & {\rm if\ }R_{\Omega}>\frac{1}{\sqrt{\beta}}\,,\\
\frac{R_{\Omega}^2\beta^2}{2}\,, & {\rm if\ }R_{\Omega}\leq\frac{1}{\sqrt{\beta}},
\end{cases}
$$
where $R_{\Omega}$ denotes the circumradius of $\Omega$, namely, the radius of the smaller disk containing $\Omega$. Alternatively, for simply connected domains the upper bound \eqref{BA} follows from
$$
\lambda_1(\Omega,\beta)\leq\beta(1-e^{-\frac{\beta|\Omega|}{2\pi}})
$$
which we prove in Theorem \ref{variable} (see Theorem \ref{TF} here below). Note that this latter bound implies that for simply connected domains, $\lambda_1(\Omega,\beta)\to 0$ as $|\Omega|\to 0^+$. This is no longer true if $\Omega$ is not simply connected, see Example \ref{annuli0}.
\medskip

For certain classes of domains we prove an asymptotically sharp upper bound. This bound depends on a universal positive constant, the {\it de Gennes constant} $\Theta_0\approx 0.590106$, which we discuss in more detail in the next subsection (see \eqref{T00}).  Namely, we prove the following
\begin{thm}\label{TB}
Let $\Omega$ be a bounded domain in $\mathbb R^2$. Assume that, up to isometries, one of the following holds:
\begin{enumerate}[1)]
\item $\Omega$ is a sub-graph, namely $\Omega=\{(x_1,x_2)\in\mathbb R^2:a<x_1<b,0<x_2<g(x_1)\}$ for some smooth $g:(a,b)\to[0,+\infty)$;
\item $\Omega$ is contained in some strip $(a,b)\times(0,+\infty)$ and contains $(a,b)\times(0,2\sqrt{\Theta_0/\beta})$.
\setcounter{nameOfYourChoice}{\value{enumi}}
\end{enumerate}
Then
\begin{equation}\label{BB1}
\lambda_1(\Omega,\beta)< \Theta_0\beta.
\end{equation}
Assume that
\begin{enumerate}[1)]
\setcounter{enumi}{\value{nameOfYourChoice}}
\item $\Omega$ is self-tiling, namely $\Omega={\rm Int}\left(\bigcup_{i=1}^m\overline\omega_i\right)$, where each $\omega_i$ is isometric to $\frac{1}{\sqrt{m}}\Omega$.
\end{enumerate}
Then
\begin{equation}\label{BB2}
\lambda_1(\Omega,\beta)\leq\Theta_0\beta.
\end{equation}
\end{thm}
The constant $\Theta_0$ is related with the asymptotic behavior of $\lambda_1(\Omega,\beta)$ as $\beta\to+\infty$, in fact, for any smooth $\Omega$ it holds $\Theta_0=\lim_{\beta\to+\infty}\lambda_1(\Omega,\beta)/\beta$ (see \eqref{asympt_B}). Hence the bounds \eqref{BB1}-\eqref{BB2} are optimal in this sense. We can pose then the following question.

\medskip

\noindent{\bf Open problem 1.} Prove that $\lambda_1(\Omega,\beta)<\Theta_0\beta$ for all $\Omega$.

\medskip

Note that our Theorem \ref{TB} does not cover the case of all disks (disks of small radius  are covered by Theorems \ref{TA} and \ref{TF}). However, numerical evidences (see Appendix \ref{sec:disk}) suggest that $\Theta_0\beta$ is a strict upper bound for all disks. 

\medskip

\noindent{\bf Open problem 2.} Prove that  $\lambda_1(B,\beta)<\Theta_0\beta$ for any disk $B$.
\medskip

Points 1) and 2) of Theorem \ref{TB} are proved in Theorem \ref{T0_1}, while point 3) is proved in Theorem \ref{propUpper}. In the case 3) we actually prove in Theorem \ref{propUpper} that $\lambda_1(\Omega,\beta)\leq\Lambda(\Omega)\beta$, where $\Lambda(\Omega)\leq\Theta_0$ is given by $\lim_{\beta\to+\infty}\lambda_1(\Omega,\beta)/\beta$. For certain domains with convex corners it is possible to show that $\Lambda(\Omega)<\Theta_0$, see \cite{vir1}. Self-tiling domains with this property are, for example, triangles and parallelograms.

\medskip

Concerning $\lambda_1$ we have also considered the case of a variable magnetic field $\beta=\beta(x)$ on simply connected Riemannian surfaces. We have established a general upper bound which depends only on the field $\beta$, and not on the magnetic potential.

\begin{thm}\label{TF}
Let $\Omega$ be a simply connected, orientable, compact Riemannian surface, let $A$ be a smooth $1$-form, and let $\beta:\Omega\to\mathbb R$ be defined by $dA=\beta dv$. If $\partial\Omega=\emptyset$, assume also $\int_{\Omega}\beta=0$. Let $\lambda_1(\Omega,A)$ denote the first eigenvalue of \eqref{AB_N_0} with magnetic potential $A$. Let $\phi:\Omega\to\mathbb R$ be the unique solution to
$$
\begin{cases}
\Delta	\phi=\beta\,, & {\rm in\ }\Omega,\\
\phi=0\,, & {\rm on\ }\partial\Omega {\rm\ (if\ }\partial\Omega\ne\emptyset{\rm )}.
\end{cases}
$$
and let $\beta^*:=\max_{\overline\Omega}|\beta|$, $\phi^*:=\max_{\overline\Omega}|\phi|$. The following inequalities hold:
\begin{enumerate}[1)]
\item $\lambda_1(\Omega,A)<\beta^*$.
\item If $\beta\geq 0$, then $\lambda_1(\Omega,A)\leq\beta^*(1-e^{-2\phi^*})$.
\item  If $\beta\geq 0$ and $\Omega$ is a domain of $\mathbb R^2$, then $\lambda_1(\Omega,A)\leq\beta^*(1-e^{-\frac{\beta^*|\Omega|}{2\pi}})$.
\item If $\Omega$ is a domain of $\mathbb R^2$ and $A$ is given by the standard potential \eqref{magnetic_potential}, then 
$$
\lambda_1(\Omega,A)=\lambda_1(\Omega,\beta)\leq\beta(1-e^{-\frac{\beta|\Omega|}{2\pi}}).
$$
\end{enumerate}
\end{thm} 
Theorem \ref{TF} is a consequence of Theorem \ref{variable} and Corollary \ref{variable_1}. In  Section \ref{sub:variable} we also discuss bounds in the case of non-simply connected domains.

\begin{rem}\label{rem_not_sc}We remark that 3) cannot hold if $\Omega$ is not simply connected. In fact, as we show in Example \ref{annuli0}, there exist non-simply connected domains $\omega_h$ with $|\omega_h|\to 0$ and $\lambda_1(\omega_h,\beta)\to c>0$ as $h\to 0^+$. On the other hand, if $\Omega$ is simply connected, 3) implies that $\lambda_1(\Omega,A)\to 0$ as $|\Omega|\to 0$. In the case of $\beta>0$ constant, this can be also deduced from \cite[Theorem 1.2]{FH2}, which implies that $\lambda_1(\Omega,\beta)\leq\frac{\beta^2|\Omega|}{8\pi}$.
\end{rem}

%In particular, point 4) of Theorem \ref{TF} establishes an  upper bound for $\lambda_1(\Omega,\beta)$ of the form $\lambda_1(\Omega,\beta)<\beta$, which is analogous to that of Theorem \ref{TA}, but with a different estimate of the distance of the eigenvalue from $\beta$.

\subsection{Lower bounds for $\lambda_1$}
The next result concerns lower bounds for $\lambda_1(\Omega,\beta)$. In order to state the result, we recall that a domain is said to satisfy the $\delta$-interior ball condition with $\delta>0$ if for any $x\in\partial\Omega$ there exists a disk of radius $\delta>0$ tangent to $\partial\Omega$ at $x$ and entirely contained in $\Omega$. In more technical terms, this condition can be also expressed by saying that the injectivity radius of the boundary is bounded below by $\delta$.

\begin{thm}\label{TC}
Let $\Omega$ be a smooth bounded domain satisfying the $\delta$-interior ball condition. Then there exists a universal constant $C>0$ such that
\begin{enumerate}[1)]
\item $\lambda_1(\Omega,\beta)\geq C\beta^2\delta^2$ if $\beta\delta^2\leq 1$;
\item $\lambda_1(\Omega,\beta)\geq C\beta$ if $\beta\delta^2\geq 1$.
\end{enumerate}
\end{thm}
Theorem \ref{TC} is proved in Section \ref{sec:lo} (Theorem \ref{thm_lower}). Its proof relies on a combination of the lower bounds for magnetic eigenvalues in \cite{kovarik} (see Theorem \ref{kov1}) and for the Laplacian eigenvalues in \cite{ChLi1997} (see Theorem \ref{ChenLi}).  Note that the behavior of our lower bounds in $\delta$ and $\beta$ is consistent with  the upper bounds of Theorem \ref{TA}. It is also consistent with the asymptotic behavior of the first eigenvalue with respect to $\beta$ as $\beta\to 0^+$ and $\beta\to+\infty$. We refer to Remark \ref{rem:lowerthm} for more discussions on the sharp behavior in $\beta$ and $\delta$ of the lower bounds of Theorem \ref{TC}. A series of examples show that in many situations the bounds given by Theorem \ref{TC} are good in capturing the behavior of the first eigenvalue (see Examples \ref{ex_kov}, \ref{conv_small}, \ref{open_curve}). In Section \ref{sec:lo}  we also prove lower bounds for star-shaped domains in terms of the radii $0<R<R_0$ of two disks $B(p,R),B(p,R_0)$ such that $B(p,R)\subset\Omega\subset B(p,R_0)$ (Proposition \ref{lo_star}).

\subsection{Upper bounds for higher eigenvalues and averages}
The next results involve upper bounds for all the eigenvalues. By means of the so-called averaged variational principle (Theorem \ref{thm:AVP}) we obtain asymptotically sharp lower bounds on the first Riesz mean $R_1$ of magnetic eigenvalues $\lambda_j$, which is defined by $R_1(z):=\sum_{j=1}^{\infty}(z-\lambda_j)_+$, where $a_+:=\max\{0,a\}$. Here we drop the dependence of $\lambda_j$ on $\beta$ and $\Omega$. Lower bounds on $R_1(z)$ are equivalent to  upper bounds for eigenvalues averages.

\begin{thm}\label{TD}
For all $z\geq 0$ we have
\begin{equation}\label{BD1}
R_1(z)\geq\frac{|\Omega|}{8\pi}z^2-\frac{|\Omega|\beta^2}{2\pi}\psi^2\left(\frac{z}{2\beta}+\frac{1}{2}\right),
\end{equation}
where $\psi(a)=a-[a]-\frac{1}{2}$ denotes the fluctuation function of $a\in\mathbb R$, and $[a]$ denotes the integer part of $a\in\mathbb R$. Moreover, for any $k\in\mathbb N$, $k\geq 1$  we have
\begin{equation}\label{BD2}
\begin{cases}
\frac{1}{k}\sum_{j=1}^k\lambda_j\leq\beta\,, & {\rm if\ }k\leq\frac{\beta|\Omega|}{2\pi}\\
\frac{1}{k}\sum_{j=1}^k\lambda_j\leq\frac{2\pi k}{|\Omega|}+R\left(\frac{2\pi k}{\beta|\Omega|}\right)\,, & {\rm if\ }k>\frac{\beta|\Omega|}{2\pi},
\end{cases}
\end{equation}
where $R(a)=\frac{\beta}{a}(a-[a])([a]-a+1)\in[0,\beta/4a]$.
\end{thm}

Theorem \ref{TD} is proved in Section \ref{sec:sums} (Theorem \ref{main_averages}). We note that our upper bounds are asymptotically sharp, in fact Weyl's law for magnetic eigenvalues implies that $R_1(z)=\frac{|\Omega|}{8\pi}z^2+o(z^2)$ as $z\to+\infty$, or, equivalently, $\frac{1}{k}\sum_{j=1}^k\lambda_j=\frac{2\pi k}{|\Omega|}+o(k)$ as $k\to+\infty$. The bounds given in Theorem \ref{TD} are the analogue of the Kr\"oger upper bounds for the averages of Laplacian eigenvalues \cite{Kro}. Note that the Weyl term $\frac{2\pi k}{|\Omega|}$ appears in the estimates \eqref{BD2} only for large $k$, and this is natural, since magnetic eigenvalues do not scale as Laplacian eigenvalues.  The first inequality of \eqref{BD2} tells us that for small $k$ (depending on $\beta$ and $|\Omega|$) the average of the first $k$ eigenvalues is smaller than $\beta$. This is somehow sharp, as this behavior can be observed in the case of disks, see Appendix \ref{sec:disk}. However, as $k\to+\infty$, the upper bound is given by the semiclassical limit $\frac{2\pi k}{|\Omega|}$, plus a remainder term which is oscillating, bounded, and of $o(1/k)$ as $k\to+\infty$.

As a corollary of Theorem \ref{TD} we get upper bounds on single eigenvalues (see Corollary \ref{single_bounds}).

\begin{cor}\label{CD}
For all $k\in\mathbb N$ we have
\begin{equation}
\lambda_{k+1}(\Omega,\beta)\leq\frac{8\pi k}{|\Omega|}+\beta.
\end{equation}
\end{cor}
Note that this agrees with the fact that the second eigenvalue might go to $+\infty$ as $|\Omega|\to 0^+$ (contrarily to the first eigenvalue).

\subsection{A reverse Faber-Krahn inequality for the first eigenvalue of embedded curves}

In Appendix \ref{sec:thin} we consider a one-dimensional eigenvalue problem related with the magnetic Laplacian. Let $\Gamma$ be a simple closed curve bounding some connected domain $\Omega$. We consider $\lambda_1(\Gamma,\hat A)$, the first eigenvalue of a magnetic operator obtained by restricting the magnetic potential $A$ to $\Gamma$. We call $\hat A$ such restriction: by definition, $\hat A(X)=A(X)$ for all tangent vectors $X$ to $\Gamma$.

 It turns out (see Theorem \ref{thm1d}) that
$$
\lambda_1(\Gamma,\hat A)=\frac{4\pi^2}{|\Gamma|^2}\min_{n\in\mathbb Z}\left(n-\frac{\beta|\Omega|}{2\pi}\right)^2.
$$
Note that $\lambda_1(\Gamma,\hat A)=0$ if and only if $\frac{\beta|\Omega|}{2\pi}\in\mathbb N$. We deduce then the following isoperimetric inequality (see Theorem \ref{reverse_curves})
\begin{thm}\label{TE}
Let $\Omega$ be a bounded, simply connected domain with boundary $\Gamma$, and let $\Omega^*$ be a disk with $|\Omega|=|\Omega^*|$ and boundary $\Gamma^*$. Then
\begin{equation}\label{BE}
\lambda_1(\Gamma,\hat A)\leq\lambda_1(\Gamma^*,\hat A).
\end{equation}
If $\frac{\beta|\Omega|}{2\pi}\notin\mathbb N$, then equality holds if and only if $\Omega=\Omega^*$.
\end{thm}

\subsection{A few properties of magnetic eigenvalues}\label{properties}

We collect in this subsection a few properties of the eigenvalues of \eqref{AB_N_0} which will be useful in the sequel.

\medskip

First, we recall that $\lambda_j(\Omega,\beta)$ are invariant under isometries, namely, if $M$ is an isometry of $\mathbb R^2$, then
$$
\lambda_j(\Omega,\beta)=\lambda_j(M(\Omega),\beta).
$$
For the proof we refer to \cite[Appendix A]{laugesen_liang}.

\medskip
For any $\beta\in\mathbb R$ it is not difficult to show that
$$
\lambda_j(\Omega,\beta)=\lambda_j(\Omega,-\beta).
$$
The proof can be performed by observing that $u$ is an eigenfunction corresponding to $\lambda_j(\Omega,\beta)$ if and only if $\bar u$ is an eigenfunction corresponding to $\lambda_j(\Omega,-\beta)$. Therefore it is not restrictive to consider only positive values of $\beta$.
\medskip

The asymptotics of $\lambda_1(\Omega,\beta)$ for large magnetic field have been investigated in depth (see e.g., \cite[\S 8]{FH_book}). It turns out that if $\Omega$ is smooth, then
\begin{equation}\label{asympt_B}
\lim_{\beta\to+\infty}\frac{\lambda_1(\Omega,\beta)}{\beta}=\Theta_0
\end{equation}
where $\Theta_0>0$ is a universal constant (de Gennes constant) defined as
\begin{equation}\label{T00}
\Theta_0=\min_{\xi\in\mathbb R}\mu_1(\xi)=\mu_1(\xi_0),
\end{equation}
with $\mu_1(\xi)$ being the first eigenvalue of the following one-dimensional problem:
\begin{equation}\label{ODE}
\begin{cases}
-f''(t)+(\xi+t)^2f(t)=\mu(\xi)f(t)\,, & t\in(0,+\infty)\\
f'(0)=0.
\end{cases}
\end{equation}
For any $\xi\in\mathbb R$ it is standard to show that problem \eqref{ODE} admits a discrete spectrum made of a sequence of simple, non-negative eigenvalues diverging to $+\infty$.
%$$
%0<\mu_1(\xi)<\mu_2(\xi)<\cdots\nearrow+\infty
%$$
It is known (see \cite{FH_book}) that
$$
\Theta_0=\xi_0^2\approx 0.590106,
$$
and that $\xi_0<0$. We may refer e.g., to \cite{bon} for the numerical approximation of $\Theta_0$ and for an estimate of the remainder.

\medskip
The limit \eqref{asympt_B} has a surprising consequence for the first eigenvalue of a family of homothetic domains. It follows from \eqref{minmax} (see also \cite{FH_book}) that for all $\alpha>0$
\begin{equation}\label{homothety}
\lambda_j(\Omega,\beta)=\alpha^2\lambda_j\left(\alpha\Omega,\frac{\beta}{\alpha^2}\right).
\end{equation}
From \eqref{homothety} we see that
$$
\lambda_1(\alpha\Omega,\beta)=\frac{1}{\alpha^2}\lambda_1(\Omega,\alpha^2\beta)=\Theta_0\beta+o(1)\,,\ \ \ {\rm as\ }\alpha\to +\infty.
$$
In particular, the asymptotic limit is strictly positive and does not depend on the measure of the domain.

\medskip

On the other hand, as $\alpha\to 0^+$, from \cite[\S 1.5]{FH_book} we have that there exists a potential $1$-form $A'$ and a constant $C_{\Omega}>0$, both depending on $\Omega$, such that
\begin{equation}\label{A0} 
\frac{\alpha^4\beta^2}{\vert \Omega \vert} \int_{\Omega} \vert A'\vert^2 -C_{\Omega}\alpha^8\beta^4 \le \lambda_1(\Omega,\alpha^2\beta) \le \frac{\alpha^4\beta^2}{\vert \Omega \vert} \int_{\Omega} \vert A'\vert^2.
\end{equation}
If $\Omega$ is simply connected, $A'=A_{can}$, where $A_{can}$ is a distinguished potential $1$-form which differs from $A$ by an exact $1$-form (see Section \ref{sub:variable} for the precise definition of $A_{can}$).

This implies that 

$$
\lambda_1(\alpha\Omega,\beta)=\frac{1}{\alpha^2} \lambda_1(\Omega,\alpha^2\beta)=O(\alpha^2)\,,\ \ \ {\rm as\ }\alpha\to 0^+.
$$

The peculiar behavior of magnetic Neumann eigenvalues is clearly highlighted in Figure \ref{F1}, were we have represented the analytic branches of the eigenvalues of the magnetic Laplacian with $\beta=1$ on the disk $B_R:=B(0,R)$ as functions of $R$. The first eigenvalue is singled out just by taking the minimum among all analytic branches. It vanishes as $R\to 0^+$ with quadratic speed, and shows an oscillating behavior as $R$ grows. It remains bounded and converges to $\Theta_0$ as $R\to+\infty$. For the disk $B_R$, from \eqref{A0} (see also \cite{FH2}) we can make the asymptotic behavior at $R=0$ more precise:
\begin{equation}\label{asympt_B_R_0}
\lambda_1(B_R,\beta)=\frac{\beta^2R^2}{8}+o(R^2)\,,\ \ \ {\rm as\ R\to 0^+}.
\end{equation}

\section{Upper bounds for $\lambda_1$}\label{sec:up}

In this section we establish upper bounds for $\lambda_1(\Omega,\beta)$. 

\medskip

In order to state our first result, we recall the definition of circumradius $R_{\Omega}$ of a domain $\Omega$:
$$
R_{\Omega}:=\inf\left\{R>0:{\rm there\ exists\ } x_R\in\mathbb R^2{\rm\ such\ that\ }\Omega\subset B(x_R,R)\right\}.
$$
We  have the following theorem, which implies Theorem \ref{TA}.

\begin{thm}\label{up_B_diam}
For any bounded domain $\Omega$ with circumradius $R_{\Omega}$ we have 
\begin{equation}\label{up_B_0}
\lambda_1(\Omega,\beta)\leq
\begin{cases}
\beta-\frac{1}{2R_{\Omega}^2}\,, & {\rm if\ }R_{\Omega}>\frac{1}{\sqrt{\beta}},\\
\frac{R_{\Omega}^2\beta^2}{2}\,, & {\rm if\ }R_{\Omega}\leq\frac{1}{\sqrt{\beta}}
\end{cases}
\end{equation}
In particular, if $R_{\Omega}\leq 1/\sqrt{\beta}$, then $\lambda_1(\Omega,\beta)\leq\beta/2$. It follows that, for all $\beta>0$
$$
\lambda_1(\Omega,\beta)<\beta.
$$
\end{thm}
\begin{proof}
Through the proof we shall denote $\lambda_1(\Omega,\beta)$ simply by $\lambda_1$.
Let $(r,t)$ denote the standard polar coordinates in $\mathbb R^2$, where $r=|x|$ and $t$ is the angular variable. We define the family of functions $\{u_n(r,t)\}_{n\in\mathbb N}$, expressed in polar coordinates, by setting $u_n(r,t):=r^ne^{in t}e^{-\frac{\beta r^2}{4}}$.  Recalling that in polar coordinates
\begin{equation}\label{laplacian_polar}
\Delta_Au=-\partial^2_{rr}u-\frac{\partial_r u}{r}-\frac{\partial^2_{tt}u}{r^2}+\frac{\beta^2r^2}{4}u+i\beta\partial_t u,
\end{equation}
it is standard to prove that $\Delta_Au_n=\beta u_n$. A standard computation (see also \cite{bauman}) shows that
$$
|\nabla^A u_n|^2=\beta|u_n|^2-\frac{1}{2}\Delta|u_n|^2.
$$
Hence, from the min-max principle \eqref{minmax} we find that for all $n\in\mathbb N$
 
\begin{equation}\label{av_0}
\lambda_1\int_{\Omega}|u_n|^2\leq\int_{\Omega}|\nabla^Au_n|^2=\beta\int_{\Omega}|u_n|^2-\frac{1}{2}\int_{\Omega}\Delta |u_n|^2.
\end{equation}
Now, if the last term of \eqref{av_0} has a negative sign, this would immediately imply that $\beta$ is a strict upper bound, but this is not in general the case.

\medskip

We start by proving the second inequality of \eqref{up_B_0}. Assume that, up to translations, $\Omega\subset B(0,R_{\Omega})$, and consider \eqref{av_0} with $n=0$. We have
$$
\lambda_1\int_{\Omega}e^{-\frac{\beta r^2}{2}}\leq\beta\int_{\Omega}e^{-\frac{\beta r^2}{2}}+\frac{1}{2}\int_{\Omega}\beta(r^2\beta-2)e^{-\frac{\beta r^2}{2}}=\frac{\beta^2}{2}\int_{\Omega}r^2e^{-\frac{\beta r^2}{2}}\leq\frac{R_{\Omega}^2\beta^2}{2}\int_{\Omega}e^{-\frac{\beta r^2}{2}}.
$$
This proves the second inequality in \eqref{up_B_0}. Note that this inequality is valid for any $\beta$, however it implies a strict upper bound by $\beta$ only for $R_{\Omega}<\sqrt{\frac{2}{\beta}}$.

\medskip 
We want to improve the upper bound for large $R_{\Omega}$ and to conclude the proof of \eqref{up_B_0}. The main idea behind the proof of the first inequality of \eqref{up_B_0} is to {\it average} inequality \eqref{av_0} with respect to $n$. Namely, we multiply both sides of \eqref{av_0} by some $a_n>0$, and sum the resulting inequalities over $n$, where $n$ ranges in some subset of $\mathbb N$. Choosing the weights $a_n$ in a suitable way, we will be able to make the sum of the terms involving $\Delta|u_n|^2$ at the right-hand side of \eqref{av_0} negative, in a controlled way.
Let then $a_n>0$, $n=0,...,N$. From \eqref{av_0} we get
$$
\lambda_1\int_{\Omega}\sum_{n=0}^Na_n|u_n|^2\leq\beta\int_{\Omega}\sum_{n=0}^Na_n|u_n|^2-\frac{1}{2}\int_{\Omega}\Delta\left(\sum_{n=0}^Na_n |u_n|^2\right),
$$
which implies
\begin{equation}\label{add}
\lambda_1\leq\beta-\frac{1}{2}\frac{\int_{\Omega}\Delta\left(\sum_{n=0}^Na_n |u_n|^2\right)}{\int_{\Omega}\sum_{n=0}^Na_n|u_n|^2}.
\end{equation}
Now, we note that

$$
|u_n|^2=e^{-\frac{\beta r^2}{2}}r^{2n},
$$
hence
$$
\sum_{n=0}^Na_n |u_n|^2=e^{-\frac{\beta r^2}{2}}\sum_{n=0}^Na_nr^{2n}.
$$
The scope is now to choose suitable $a_n>0$ and take the limit as $N\to\infty$. This is done by noting that
$$
e^{\frac{\beta r^2}{2}}=\sum_{n=0}^{\infty}\frac{\beta^n}{2^nn!}r^{2n},
$$
and the convergence is uniform on any compact subset of $\mathbb R^2$. Then we choose
$$
a_n=\frac{\beta^n}{c^nn!}
$$
with $c>0$.
Hence, on any compact set, we have
\begin{equation}\label{add0}
\lim_{N\to+\infty}\sum_{n=0}^Na_n|u_n|^2=e^{\frac{\beta r^2}{2c}(2-c)}
\end{equation}
and
$$
\lim_{N\to+\infty}-\Delta\left(\sum_{n=0}^Na_n|u_n|^2\right)=e^{\frac{\beta r^2}{2c}(2-c)}\beta\frac{(2c+(2-c)r^2\beta)(2-c)}{c^2}
$$
Now, we assume that $\Omega\subset B(0,R_{\Omega})$. Then we have, for $r\leq R_{\Omega}$
$$
\frac{(2c+(2-c)r^2\beta)(2-c)}{c^2}=\frac{2(2-c)}{c}+\frac{(2-c)^2r^2\beta}{c^2}\leq \frac{2(2-c)}{c}+\frac{(2-c)^2R_{\Omega}^2\beta}{c^2}.
$$
Consider now the function
$$
g(c)=\frac{2(2-c)}{c}+\frac{(2-c)^2R_{\Omega}^2\beta}{c^2}.
$$
We have that
$$
g'(c)=-\frac{4}{c^3}(c+(2-c)R_{\Omega}^2)
$$
and
$$
\lim_{c\to 0^+}g(c)=+\infty\,,\ \ \ \lim_{c\to +\infty}g(c)=\beta R_{\Omega}^2-2.
$$
We see that
$$
g'(c)=0{\rm\ \ \ \iff\ \ \ }c=\frac{2R_{\Omega}^2\beta}{R_{\Omega}^2\beta-1}.
$$
Now, if $R_{\Omega}>\frac{1}{\sqrt{\beta}}$, we choose $c=\frac{2R_{\Omega}^2\beta}{R_{\Omega}^2\beta-1}$
and with this choice $g\left(\frac{2R_{\Omega}^2\beta}{R_{\Omega}^2\beta-1}\right)=-\frac{1}{\beta R_{\Omega}^2}$. We conclude that, when $c=\frac{2R_{\Omega}^2\beta}{R_{\Omega}^2\beta-1}$,
\begin{multline}\label{add1}
\lim_{N\to+\infty}-\Delta\left(\sum_{n=0}^Na_n|u_n|^2\right)=e^{\frac{\beta r^2}{2c}(2-c)}\beta\frac{(2c+(2-c)r^2\beta)(2-c)}{c^2}
\leq -\frac{1}{R_{\Omega}^2}e^{\frac{\beta r^2}{2c}(2-c)}.
\end{multline}
Using \eqref{add0} and \eqref{add1} in \eqref{add} we deduce the first inequality of \eqref{up_B_0}.

%If $R_{\Omega}\leq\frac{1}{\sqrt{\beta}}$, then the function $g(c)$ is strictly decreasing to $\beta R_{\Omega}^2-2$, so we can choose $c>0$ arbitrarily large in order to minimize $g(c)$, and obtain the desired inequality.
\end{proof}

\begin{rem}
The upper bound for $R_{\Omega}\leq\frac{1}{\sqrt{\beta}}$ shows a correct behavior with respect to $\beta$, which is quadratic, in view of \eqref{homothety}, \eqref{A0} and \eqref{asympt_B_R_0}.
\end{rem}

In view of the asymptotic behavior \eqref{asympt_B} the natural question is whether $\Theta_0\beta$ is an upper bound for $\lambda_1(\Omega,\beta)$, for any domain $\Omega$. We prove this result for certain classes of domains. 

\begin{thm}\label{T0_1}
Let $\Omega$ be a bounded domain of $\mathbb R^2$ satisfying (up to isometries) one of the following two conditions:
\begin{enumerate}[1)]
\item $\Omega$ is a sub-graph, namely
$$
\Omega=\{(x_1,x_2)\in\mathbb R^2:a<x_1<b\,, 0< x_2<  g(x_1)\}
$$
for some smooth $g:(a,b)\to[0,+\infty)$, $-\infty<a<b<+\infty$.
\item $\Omega$ is contained in some strip $(a,b)\times(0,+\infty)$ and contains $(a,b)\times(0,-2\xi_0/\sqrt{\beta})$, where $\xi_0<0$ is the constant defined in \eqref{T00}.
\end{enumerate}
Then
\begin{equation}\label{ineq_T0_1}
\lambda_1(\Omega,\beta)<\Theta_0\beta.
\end{equation}
\end{thm}
\begin{proof}We first remark that it is sufficient to prove the result for $\beta=1$. In fact, from \eqref{homothety} we have that $\lambda_1(\Omega,\beta)=\beta\lambda_1(\Omega',1)$, where $\Omega'=\sqrt{\beta}\Omega$. Now, $\Omega$ is a sub-graph of the form 1) if and only if $\Omega'$ is; $\Omega$ satisfies condition 2) if and only if $\Omega'$ does  with $\beta=1$.

\medskip

Let $f$ be a first eigenfunction of \eqref{ODE} with $\xi=\xi_0<0$ and hence first eigenvalue $\Theta_0$, defined in \eqref{T00}. Recall that $\Theta_0=\xi_0^2$. We can choose $f>0$ on $[0,+\infty)$. We recall that $f$ satisfies
\begin{equation}\label{ODE2}
-f''(t)+(\xi_0+t)^2f(t)=\Theta_0f(t)\,, \ t\in(0,+\infty)\,,\ \ \ f'(0)=0.
\end{equation}
We prove that $f'<0$ on $(0,+\infty)$ and that $\lim_{t\to+\infty}f(t)=\lim_{t\to+\infty}f'(t)=0$.
Equation \eqref{ODE2} implies that $f''$ has only one zero in $(0,+\infty)$, namely $-2\xi_0$ (moreover, $f''(0)=0$). This implies that $f'$ is monotone on $(-2\xi_0,+\infty)$, and since $f\in H^1((0,+\infty))$, necessarily $f'$ is increasing to $0$ on $(-2\xi_0,+\infty)$ and  $\lim_{t\to\infty}f'(t)=0$. On the other hand, $f'$ is decreasing on $(0,-2\xi_0)$. Moreover, $f'(0)=0$ and $f(0)>0$. Suppose by contradiction that $f$ is not decreasing on $(0,+\infty)$. This would imply the existence of $t_0\in(0,+\infty)$ such that $f'(t_0)=0$. Since $f'(0)=0$ and $\lim_{t\to+\infty}f'(t)=0$, this would imply the existence of two distinct points $t_1, t_2\in(0,+\infty)$ such that $f''(t_1)=f''(t_2)=0$, but this is impossible since $f''(t)=0$ on $(0,+\infty)$ if and only if $t=-2\xi_0$. This proves that $f'<0$ on $(0,+\infty)$. In particular then, $\lim_{t\to+\infty}f(t)=0$, since $f\in L^2((0,+\infty))$.
\medskip

We consider the magnetic Laplacian $\Delta_{A'}$, where $A'=-(\xi_0+x_2)dx_1$. By gauge invariance, the Neumann spectrum of $\Delta_{A'}$ coincides with the Neumann spectrum of $\Delta_A$ on $\Omega$ (the two forms differ by an exact 1-form). From now on we shall denote $\lambda_1(1,\Omega)$ simply by $\lambda_1$
\medskip

We are ready to prove 1) for $\beta=1$.  Using $f(x_2)$ as test function in \eqref{minmax} we get
\begin{multline*}
\lambda_1\int_{\Omega}f^2\leq\int_{\Omega}|\nabla f|^2+(\xi_0+x_2)^2f^2=\int_a^b\int_0^{g(x_1)}(f'(x_2)^2+(\xi_0+x_2)^2f^2(x_2))dx_2dx_1\\
=\Theta_0\int_{\Omega}f^2+\int_a^b f(g(x_1))f'(g(x_1))dx_1<\Theta_0\int_{\Omega}f^2.
\end{multline*}
We have used, in the integration by parts, the fact that $f'(0)=0$; the last inequality follows since $f$ is positive and strictly decreasing.

\medskip

In order to prove 2) (and re-prove 1)), we use $f(x_2)$ as test function in \eqref{minmax}, and use the identity $f'^2=-ff''+\frac{1}{2}(f^2)''$. We obtain

\begin{multline}\label{Alessandro1}
\lambda_1\int_{\Omega}f^2\leq\int_{\Omega}|\nabla f|^2+(\xi_0+x_2)^2f^2=\int_{\Omega}(-ff''+(\xi_0+x_2)f^2)+\frac{1}{2}\int_{\Omega}(f^2(x_2))''\\=\Theta_0\int_{\Omega}f^2+\frac{1}{2}\int_{\Omega}(f^2)''
\end{multline}
We are left with the study $\frac{1}{2}\int_{\Omega}(f^2)''$. In fact the upper bound $\lambda_1<\Theta_0$ holds provided $\frac{1}{2}\int_{\Omega}(f^2)''< 0$.

We see that
$$
\frac{1}{2}\int_0^{+\infty}(f^2(t))''=\frac{1}{2}\lim_{s\to+\infty}\int_0^s(f^2(t))''=\lim_{s\to+\infty}\frac{1}{2}(f^2(s))'=\lim_{t\to+\infty}f(s)f'(s)=0
$$
since $f'(0)=0$. Moreover, using the identity $f'^2=-ff''+\frac{1}{2}(f^2)''$ and the differential equation satisfied by $f$, we find that
$$
\frac{1}{2}(f^2(t))''=(f'(t))^2+tf^2(t)(t+2\xi_0)
$$
and this quantity is non-negative for $t\geq-2\xi_0$.

We easily deduce two facts:
\begin{enumerate}[a)]
\item $\frac{1}{2}\int_0^L(f^2(t))''<0$ for all $L>0$.
\item $\frac{1}{2}\int_I(f^2(t))''<0$ for all $I\subset\mathbb R^+$ such that $(0,-2\xi_0)\subset I$.
\end{enumerate}

Either a) or b) imply the inequality \eqref{ineq_T0_1}. We show now that 1) and 2) in the statement of Theorem \ref{T0_1} imply a) and b), respectively. In fact, we can re-write the last term of \eqref{Alessandro1} as
$$
\frac{1}{2}\int_{\Omega}(f^2)''=\int_{P(\Omega)}\left(\frac{1}{2}\int_{E(x_1)}(f^2(x_2))''dx_2\right)dx_1
$$
where $P(\Omega)=\{x_1\in\mathbb R:(x_1,x_2)\in\Omega\}\subset\mathbb R$ and, for any $x_1\in P(\Omega)$, $E(x_1)=\{x_2\in(0,+\infty):(x_1,x_2)\in\Omega\}$. 
Recall that we have assumed that $\Omega$ is in the half-plane $x_2>0$. Assume we are in the hypothesis 1) or 2):
\begin{enumerate}[1)]
\item $\Omega$ is a sub-graph. Then we have $E(x_1)=(0,L(x_1))$ for all $x_1$, $L(x_1)>0$, which is a).
\item $\Omega$ is contained in some strip $(a,b)\times(0,+\infty)$ and $\Omega$ contains $(a,b)\times(0,-2\xi_0)$, which is b).
\end{enumerate}
The proof is now concluded.

%\end{comment}

\end{proof}

\begin{rem}
Note that the case 2) of Theorem \ref{T0_1} implies an upper bound for $\lambda_1$ with $\Theta_0\beta$ for a class of domains containing also non-simply connected domains. We just require that a suitable rectangle is contained in the domain. Note that as $\beta\to+\infty$ the size of the rectangle becomes small, hence more domains are allowed for the upper bound. As $\beta\to 0^+$, less domains are allowed. However, as $\beta\to 0^+$ we have in general better upper bounds than $\Theta_0\beta$, in fact upper bounds behave like $C\beta^2$ as $\beta\to 0^+$ (see \eqref{A0}, see also \cite{FH_book}). Note also that  suitable unions of domains of the form 1) and 2) still enjoy the upper bound $\Theta_0\beta$.
\end{rem}

We prove now a similar upper bound for {\it self-tiling} domains.

\begin{defn} A piecewise smooth domain $\Omega\subset\mathbb R^2$ is called \emph{self-tiling} if there exists an integer $m\ge2$ such that 
\[\Omega={\rm Int}\left(\bigcup_{i=1}^m\overline{\omega_i}\right),\]
where each $\omega_i$ is isometric to $\frac{1}{\sqrt{m}}\Omega$.
\end{defn}

\begin{rem}
	The definition is of course restrictive. Nevertheless, all triangles and all parallelograms are self-tiling.  Note that not all self-tiling domains are covered by the previous Theorem \ref{T0_1}, e.g., parallelograms are not.
\end{rem}

We have already recalled that for any smooth bounded  domain, the limit \eqref{asympt_B} holds. We now assume  that the boundary of $\Omega$ is a curvilinear polygon, that is to say 
\begin{equation*}
						\partial \Omega=\bigcup_{s=1}^N\gamma_s,\end{equation*}
the $\gamma_s$ being $C^\infty$-arcs which are disjoint, except at the endpoints, where $\gamma_{s-1}$ and $\gamma_s$ meet with an angle $\alpha_s\in]0,2\pi[$ (using the convention $\gamma_0=\gamma_N$).  Under this assumption, it follows from \cite[Corollary 1.3]{vir1} that  there exists a constant $\Lambda(\Omega)\le \Theta_0$, depending only on the angles $\alpha_s$, such that
\begin{equation*}
	\lim_{\beta\to +\infty}\frac{\lambda_1(\Omega,\beta)}{\beta}=\Lambda(\Omega).
\end{equation*}
Furthermore, as described in \cite[Remarks 2.6 and 4.3]{vir1}, $\Lambda(\Omega)<\Theta_0$ whenever $\min_{1\le s\le N}\alpha_s\le\frac\pi2$. This is in particular the case when $\Omega$ is a triangle or a parallelogram. 

\medskip

We are now ready to state the next theorem.

\begin{thm}\label{propUpper} Let $\Omega$ be a self-tiling curvilinear polygon. Then 
\begin{equation}\label{ineq_T_1}
\lambda_1(\Omega,\beta)\leq \Lambda(\Omega)\,\beta.
\end{equation}
\end{thm}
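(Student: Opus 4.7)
The plan is to combine Neumann bracketing with the scaling identity \eqref{homothety} and the asymptotic $\lim_{\beta\to+\infty}\lambda_1(\Omega,\beta)/\beta=\Lambda(\Omega)$ (valid for curvilinear polygons by \cite[Corollary 1.3]{vir1}). The key observation is that $\beta\mapsto\lambda_1(\Omega,\beta)/\beta$ is nondecreasing along the geometric progression $\{m^k\beta\}_{k\geq 0}$, so its (finite) limit at infinity majorizes it at $\beta$.

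First I would rescale the self-tiling decomposition upward by $\sqrt{m}$: if $\Omega=\mathrm{Int}\bigl(\bigcup_{i=1}^{m}\overline{\omega_i}\bigr)$ with each $\omega_i$ isometric to $\Omega/\sqrt{m}$, then
\[
\sqrt{m}\,\Omega=\mathrm{Int}\Bigl(\bigcup_{i=1}^{m}\sqrt{m}\,\overline{\omega_i}\Bigr),
\]
and now each tile $\sqrt{m}\,\omega_i$ is isometric to $\Omega$. I then apply Neumann bracketing to the magnetic problem: the min-max \eqref{minmax} on $H^{1}(\sqrt{m}\,\Omega)$ is bounded below by the min-max on the larger space of functions only required to lie in $H^{1}$ on each tile, which decouples into the direct sum of magnetic Neumann problems on the tiles. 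Since $\Omega$ is simply connected, any two $1$-forms on $\Omega$ with the same exterior derivative $\beta\,dv$ differ by an exact form, so gauge invariance plus the isometry between $\sqrt{m}\,\omega_i$ and $\Omega$ gives $\lambda_1(\sqrt{m}\,\omega_i,\beta)=\lambda_1(\Omega,\beta)$. This yields
\[
\lambda_1(\sqrt{m}\,\Omega,\beta)\,\geq\,\min_{i}\lambda_1(\sqrt{m}\,\omega_i,\beta)=\lambda_1(\Omega,\beta).
\]

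Next I would combine this with \eqref{homothety} applied with $\alpha=\sqrt{m}$, which gives $\lambda_1(\sqrt{m}\,\Omega,\beta)=\frac{1}{m}\lambda_1(\Omega,m\beta)$. Substituting, I obtain $\lambda_1(\Omega,m\beta)\geq m\,\lambda_1(\Omega,\beta)$, or equivalently
\[
\frac{\lambda_1(\Omega,m\beta)}{m\beta}\,\geq\,\frac{\lambda_1(\Omega,\beta)}{\beta}.
\]
Iterating $k$ times produces $\lambda_1(\Omega,m^{k}\beta)/(m^{k}\beta)\geq\lambda_1(\Omega,\beta)/\beta$ for every $k\geq 0$. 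Letting $k\to\infty$ (so that $m^{k}\beta\to\infty$ since $m\geq 2$) and invoking the asymptotic $\lambda_1(\Omega,\beta')/\beta'\to\Lambda(\Omega)$ from \cite{vir1}, I conclude $\Lambda(\Omega)\geq\lambda_1(\Omega,\beta)/\beta$, which is \eqref{ineq_T_1}.

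The only delicate step is the Neumann bracketing in the magnetic setting, which requires (i) checking that the decoupled problem on $\bigsqcup_{i}\sqrt{m}\,\omega_i$ really is the variational problem associated to $\Delta_A$ on each piece with magnetic Neumann conditions, and (ii) identifying the individual spectra with $\lambda_1(\Omega,\beta)$ via an isometry and a gauge transformation; both are straightforward but rely on the simply connected nature of the curvilinear polygon $\Omega$. Everything else is just the scaling identity \eqref{homothety} and one limit passage.
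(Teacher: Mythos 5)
Your proof is correct and is essentially the paper's own Pólya-type argument: Neumann bracketing on the tiling, the scaling identity \eqref{homothety}, iteration along $m^k\beta$, and the limit from \cite{vir1}. The only cosmetic difference is that you rescale the decomposition up to $\sqrt{m}\,\Omega$ while the paper keeps $\Omega$ fixed and rescales the field to $m\beta$; also, your appeal to simple connectivity is unnecessary, since isometry invariance of the spectrum for the standard potential holds via a gauge function defined on all of $\mathbb{R}^2$.
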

\begin{proof} 
The proof is an adaptation of the argument from Pólya (see \cite{polyatiling}). We fix one of the pieces in the decomposition of $\Omega$, say $\omega_1$. For all $\beta\in\mathbb R$, 
\begin{equation}\label{eq:var}
\lambda_1(\omega_1,\beta)\le \lambda_1(\Omega,\beta).
\end{equation}
Indeed, the Sobolev space $H^1(\Omega)$ can be seen as  a subspace of $H^1(\Omega')$, with
 \[\Omega':=\bigcup_{i=1}^m \omega_i. {\rm \ \ \ (the\ union\ is\ disjoint)}\] 
The Hilbert space $H^1(\Omega')$ can itself be seen as the direct sum
 \[\bigoplus_{i=1}^m H^1(\omega_i).\]
This last identification tells us that $\lambda_1(\Omega',\beta)=\lambda_1(\omega_1,\beta)$, and the inclusion $H^1(\Omega)\subset H^1(\Omega')$ implies, by the variational definition of eigenvalues, that 
\[\lambda_1(\Omega',\beta)\le \lambda_1(\Omega,\beta).\]
This yields \eqref{eq:var}. In particular, replacing $\beta$ by $m\beta$ we have
$$
\lambda_1(\omega_1,m\beta)\leq\lambda_1(\Omega,m\beta)
$$
for all $m>0$. On the other hand, the scaling property of the magnetic eigenvalues tells us that
\begin{equation}\label{eq:scal}
\lambda_1(\omega_1,m\beta)=\lambda_1\left(\frac{1}{\sqrt{m}}\Omega,m\beta\right)=m\,\lambda_1\left(\Omega,\beta\right).
\end{equation}
We get
\begin{equation}\label{eq:step}
	\lambda_1(\Omega,\beta)\le \frac1{m}\lambda_1(\Omega,m\beta).
\end{equation}
Iterating \eqref{eq:step}, we obtain that for all positive integers $k$,
\begin{equation}
	\lambda_1(\Omega,\beta)\le \frac1{m^k}\lambda_1\left(\Omega,m^k\beta\right).
\end{equation}
Taking $k\to+\infty$, we find
\[\frac{\lambda_1(\Omega,\beta)}\beta\le \liminf_{k\to+\infty}\frac{\lambda_1\left(\Omega,m^k\beta\right)}{m^k\,\beta}\le \Lambda(\Omega).\qedhere\]
This concludes the proof.
\end{proof}

\section{Upper bounds of $\lambda_1$ for variable magnetic fields}\label{sub:variable}
In this section we consider the first eigenvalue of problem \eqref{AB_N_0} when $\Omega$ is a planar domain or more generally an orientable compact surface with boundary, and $A$ is a generic smooth magnetic potential (a smooth $1$-form) giving rise to a (variable) magnetic field $dA=\beta dv$, where $dv$ is the Riemannian volume form of $\Omega$. Through this section we shall denote the first eigenvalue of \eqref{AB_N_0} by $\lambda_1(\Omega,A)$. As we have already discussed in Subsection \ref{properties}, if $\Omega$ is simply connected, $\lambda_1(\Omega,A)$ depends only on $\beta$. In the case that $\Omega$ is not simply connected, given $A,A'$ with $dA=dA'=\beta dv$, in general $\lambda_1(\Omega,A)\ne\lambda_1(\Omega,A')$. In this case we shall choose a distinguished {\it primitive} of $\beta dv$, which we will call $A_{can}$. In order to define $A_{can}$, we need a few preliminaries.

Let $\Omega$ be a compact orientable surface with boundary, and let $\beta:\Omega\to\mathbb R$ be a given smooth function. Consider problem
\begin{equation}\label{torsion}
\begin{cases}
\Delta	\phi=\beta\,, & {\rm in\ }\Omega,\\
\phi=0\,, & {\rm on\ }\partial\Omega.
\end{cases}
\end{equation}
Problem \eqref{torsion} admits a unique solution which reduces to the torsion function when $\beta=1$. We call $A_{can}$  the $1$-form defined by $A_{can}=-\star d\phi$, where $\star$ is the Hodge-star operator acting on differential forms, for a chosen orientation of $\Omega$. Recall that the Hodge-star operator is defined by the following relation: for any pair of $1$-forms $\psi_1,\psi_2$ we have
$$
\psi_1\wedge\star\psi_2:=\langle\psi_1,\psi_2\rangle dv,
$$
and $\star 1=dv$, $\star dv=1$. For example, in $\mathbb R^2$ $\langle\cdot,\cdot\rangle$ stands for the standard scalar product and, with Cartesian coordinates $(x_1,x_2)$ and positive orthonormal basis $\left(\frac{\partial}{\partial x_1},\frac{\partial}{\partial x_2}\right)$, one has $\star 1=dx_1\wedge dx_2$, $\star dx_1=dx_2$, $\star dx_2=-dx_1$, $\star(dx_1\wedge dx_2)=1$. We denote by $\delta$ the co-differential (on $1$-forms we have $\delta=-{\rm div}$).
We prove the following lemma.
\begin{lem} The form $A_{can}$ is a primitive of $\beta dv$, i.e., $dA_{can}=\beta dv$. 
\end{lem}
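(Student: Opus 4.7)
The plan is to show $dA_{can} = \beta\, dv$ by a direct computation, using the standard identity relating $\Delta$ on functions to the Hodge star.

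First I would compute $dA_{can} = -d\!\star\! d\phi$. Since $d\phi$ is a $1$-form on a surface, $\star d\phi$ is again a $1$-form, and $d\!\star\! d\phi$ is a $2$-form. The key observation is that, on a Riemannian surface, the codifferential acting on a $1$-form $\omega$ is $\delta\omega = -\!\star\! d\!\star\!\omega$, so for the $0$-form $\phi$ one has
\[
\Delta \phi = \delta d\phi = -\!\star\! d\!\star\! d\phi.
\]
Applying $\star$ to both sides, and using that $\star\star = 1$ on $2$-forms, gives $d\!\star\! d\phi = -(\Delta \phi)\, dv = -\beta\, dv$. Hence $dA_{can} = -d\!\star\! d\phi = \beta\, dv$, as required.

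For readers preferring a coordinate verification (which is perhaps more transparent for this short lemma), I would work in a local positively oriented orthonormal frame (in $\mathbb R^2$, simply Cartesian coordinates $(x_1,x_2)$). Then $d\phi = \phi_{x_1} dx_1 + \phi_{x_2} dx_2$, so using the conventions $\star dx_1 = dx_2$ and $\star dx_2 = -dx_1$ recalled just before the lemma, one gets $A_{can} = -\!\star\! d\phi = \phi_{x_2}\, dx_1 - \phi_{x_1}\, dx_2$, and
\[
dA_{can} = -(\phi_{x_1 x_1} + \phi_{x_2 x_2})\, dx_1\wedge dx_2 = (\Delta\phi)\, dv = \beta\, dv.
\]
On a general orientable Riemannian surface the same computation is carried out in geodesic normal coordinates, or invariantly through the Hodge identity above.

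The step that is slightly subtle is the sign bookkeeping: the paper uses the positive Laplacian $\Delta = -\partial_{x_1}^2 - \partial_{x_2}^2$ on functions, together with $\delta = -\mathrm{div}$ on $1$-forms, and the given orientation conventions $\star 1 = dv$, $\star dv = 1$. I would therefore state the sign conventions explicitly at the start, so that the identity $\Delta\phi = -\!\star\! d\!\star\! d\phi$ (with the specific sign) is unambiguous. Once this is in place, the proof is a one-line calculation and no further obstacle remains.
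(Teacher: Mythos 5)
Your argument is correct and is essentially the same as the paper's: both derive $dA_{can}=-d\star d\phi=\star\delta d\phi=\star\Delta\phi=\beta\,dv$ from the identity $\delta=-\star d\star$ on a surface. The added coordinate verification is consistent with the paper's sign conventions (positive Laplacian, $\star dx_1=dx_2$, $\star dx_2=-dx_1$) and checks out.
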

\begin{proof}  We recall that $\delta=-\star d\star$, which implies that, for $1$-forms, $\star\delta=-d\star$ and $\delta\star=-\star d$.  Consider the $1$-form $A_{can}=-\star d\phi$, where $\phi$ solves \eqref{torsion}. Then
$$
dA_{can}=-d\star d\phi=\star\delta d\phi=\star \Delta\phi=\star\beta=\beta dv.
$$ 
\end{proof} 
We are ready to state the main result of this section.
\begin{thm}\label{variable}
Let $\Omega$ be a simply connected, orientable, compact Riemannian surface and let $\beta:\Omega\to\mathbb R$ be a smooth function.  Let $A$ be any potential $1$-form such that $dA=\beta dv$ and let $\lambda_1(\Omega,A)$ denote the first eigenvalue of \eqref{AB_N_0} with magnetic potential $A$. Let $\phi:\Omega\to\mathbb R$ be the unique solution to \eqref{torsion}. Then
\begin{equation}\label{bound_general}
\lambda_1(\Omega,A)\leq\frac{\int_{\Omega}\beta(e^{2\phi}-1)dv}{\int_{\Omega}e^{2\phi}dv}.
\end{equation}
If $\Omega$ is not simply connected, the inequality holds for $\lambda_1(\Omega,A_{can})$.
\end{thm}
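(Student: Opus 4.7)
The strategy is to apply the min-max characterization \eqref{minmax} with a carefully chosen real-valued test function, after fixing the gauge so that the magnetic potential is expressed in terms of $\phi$.

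\textbf{Step 1 (Gauge fixing).} If $\Omega$ is simply connected, then any two primitives of $\beta\,dv$ differ by a closed (hence exact) $1$-form, so by gauge invariance $\lambda_1(\Omega,A)=\lambda_1(\Omega,A_{can})$, where $A_{can}=-\star d\phi$. In the non-simply-connected case, I work directly with $A_{can}$, which is well defined via the lemma proved just above the theorem. In particular $|A_{can}|^2=|d\phi|^2$.

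\textbf{Step 2 (Choice of test function).} The candidate is the real-valued function $u=e^{\phi}$. For a real $u$ and a real $1$-form $A$, the cross terms in
\[
|\nabla^Au|^2=\langle du-iuA,\,du+iuA\rangle
\]
are purely imaginary and cancel, so that $|\nabla^Au|^2=|du|^2+u^2|A|^2$. Plugging in $u=e^{\phi}$ and $A=A_{can}$ gives $|du|^2=e^{2\phi}|d\phi|^2=u^2|d\phi|^2=u^2|A|^2$, hence
\[
|\nabla^Au|^2=2e^{2\phi}|d\phi|^2.
\]

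\textbf{Step 3 (Integration by parts).} The key identity is
\[
\int_{\Omega}2e^{2\phi}|d\phi|^2\,dv=\int_{\Omega}\beta(e^{2\phi}-1)\,dv.
\]
To prove it, write $2e^{2\phi}|d\phi|^2=\langle d(e^{2\phi}),d\phi\rangle$ and apply Green's formula together with $\Delta\phi=\beta$:
\[
\int_{\Omega}\langle d(e^{2\phi}),d\phi\rangle=\int_{\partial\Omega}e^{2\phi}\,\partial_N\phi\,d\sigma+\int_{\Omega}e^{2\phi}\,\Delta\phi\,dv.
\]
Since $\phi=0$ on $\partial\Omega$, the boundary factor $e^{2\phi}$ equals $1$; and since $\int_{\Omega}\Delta\phi\,dv=-\int_{\partial\Omega}\partial_N\phi\,d\sigma=\int_{\Omega}\beta\,dv$, the boundary integral contributes $-\int_{\Omega}\beta\,dv$. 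The volume term contributes $\int_{\Omega}e^{2\phi}\beta\,dv$, yielding the identity. (If $\partial\Omega=\emptyset$ the boundary term is absent and the same computation goes through using $\int_\Omega\beta\,dv=0$.)

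\textbf{Step 4 (Conclusion).} Inserting $u=e^{\phi}$ into the min-max principle \eqref{minmax} gives
\[
\lambda_1(\Omega,A_{can})\leq\frac{\int_{\Omega}|\nabla^Au|^2\,dv}{\int_{\Omega}u^2\,dv}=\frac{\int_{\Omega}\beta(e^{2\phi}-1)\,dv}{\int_{\Omega}e^{2\phi}\,dv},
\]
and in the simply connected case this coincides with $\lambda_1(\Omega,A)$ by Step 1.

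\textbf{Main obstacle.} The computations in Steps 2--3 are routine once the test function is fixed; the real point is recognizing that $u=e^{\phi}$ is the function that makes $|du|^2$ and $u^2|A_{can}|^2$ combine into an exact derivative $\tfrac{1}{2}\langle d(e^{2\phi}),d\phi\rangle$, so that the defining PDE $\Delta\phi=\beta$ converts the numerator of the Rayleigh quotient into an expression involving only $\beta$ and $e^{2\phi}$. This is what motivates the canonical choice $A_{can}=-\star d\phi$ in the non-simply-connected case as well.
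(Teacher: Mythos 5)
Your proposal is correct and follows essentially the same route as the paper: gauge-fix to $A_{can}=-\star d\phi$, use the real test function $u=e^{\phi}$ so that $|\nabla^{A_{can}}u|^2=|du|^2+u^2|d\phi|^2=2e^{2\phi}|d\phi|^2$, and convert the Rayleigh quotient's numerator into $\int_\Omega\beta(e^{2\phi}-1)\,dv$ by integrating by parts against $\Delta\phi=\beta$ with the Dirichlet condition $\phi=0$ on $\partial\Omega$. The only difference is cosmetic (you integrate $\langle d(e^{2\phi}),d\phi\rangle$ directly, while the paper routes the same identity through $\Delta(e^\phi)$), so the two arguments coincide.
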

\begin{proof} Clearly, it is enough to show the assertion for $A=A_{can}=-\star d\phi$. 
Note that then, since the Hodge star operator is an isometry, we have $\abs{A}^2=\abs{d\phi}^2$. If $u$ is a real valued smooth function then we have from \eqref{minmax}
\begin{equation}\label{minmaxV}
\lambda_1(\Omega,A)\int_{\Omega}u^2\leq \int_{\Omega}\abs{\nabla u}^2+\abs{A}^2u^2
=\int_{\Omega}\abs{\nabla u}^2+\abs{\nabla\phi}^2u^2
\end{equation}
We take $u=e^{\phi}$ so that $\Delta u=-u\abs{\nabla\phi}^2+u\Delta\phi=-u\abs{\nabla\phi}^2+u\beta$. Integrating by parts, taking into account that on $\partial\Omega$ one has $u=1$ and $\frac{\partial u}{\partial N}=\frac{\partial \phi}{\partial N}$, and using the Green formula on the last boundary integral, we get:
$$
\int_{\Omega}\abs{\nabla u}^2=\int_{\Omega}u\Delta u+\int_{\partial\Omega}u\frac{\partial u}{\partial N}=
-\int_{\Omega}\abs{\nabla\phi}^2u^2+\int_{\Omega}\beta(u^2-1).
$$
Inserting this identity in the right-hand side of \eqref{minmaxV} we obtain the assertion. 
\end{proof}
 Let us set $\beta^*=\max_{\overline{\Omega}}|\beta|$, $\phi^*=\max_{\overline{\Omega}}|\phi|$. From Theorem \ref{variable} we deduce the following

\begin{cor}\label{variable_1}
Let $\Omega$ be a simply connected, orientable, compact Riemannian surface, let $A$ be any smooth $1$-form such that $dA=\beta dv$, with $\beta:\Omega\to\mathbb R$ smooth.  Then
\begin{enumerate}[1)]
\item
\begin{equation}\label{bound_general_0}
\lambda_1(\Omega,A)<\beta^*.
\end{equation}
\item If $\beta\geq 0$, then
\begin{equation}\label{bound_general_1}
\lambda_1(\Omega,A)\leq\beta^*(1-e^{-2\phi^*}).
\end{equation}
\item
If $\beta\geq 0$ and $\Omega$ is a domain in $\mathbb R^2$, then
\begin{equation}\label{bound_general_2}
\lambda_1(\Omega,A)\leq\beta^*(1-e^{-\frac{\beta^*|\Omega|}{2\pi}}).
\end{equation}
\item If $\Omega$ is a domain in $\mathbb R^2$ and $A$ is given by \eqref{magnetic_potential} (hence $\beta(x)\equiv\beta>0$), then
\begin{equation}\label{bound_general_3}
\lambda_1(\Omega,A)=\lambda_1(\Omega,\beta)\leq\beta(1-e^{-\frac{\beta|\Omega|}{2\pi}}).
\end{equation}
\item If $\Omega$ is not simply connected, all the inequalities above hold with $A=A_{can}$.
\end{enumerate}
\end{cor}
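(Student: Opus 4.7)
The strategy is to extract (1)--(4) from Theorem \ref{variable} via elementary pointwise comparisons on $\beta$ and $\phi$, and finally (5) by observing that the proof of Theorem \ref{variable} never used simple connectedness. I would begin with (2). Since $\beta \geq 0$ and $\phi$ solves $\Delta\phi = \beta$ with $\phi|_{\partial\Omega} = 0$ in our sign convention (the Laplacian is positive), the maximum principle forces $\phi \geq 0$ throughout $\Omega$, and hence $e^{2\phi} - 1 \geq 0$. Substituting the pointwise bound $\beta \leq \beta^*$ in the numerator of Theorem \ref{variable} yields
\[
\lambda_1(\Omega, A) \leq \beta^* \cdot \frac{\int_\Omega (e^{2\phi}-1)\,dv}{\int_\Omega e^{2\phi}\,dv} = \beta^*\Bigl(1 - \frac{|\Omega|}{\int_\Omega e^{2\phi}\,dv}\Bigr),
\]
and the trivial estimate $\int_\Omega e^{2\phi}\,dv \leq |\Omega|\,e^{2\phi^*}$ completes (2).

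For (1), I would proceed case by case. If $\beta \geq 0$, strict inequality follows from (2) since $1 - e^{-2\phi^*} < 1$. For $\beta$ of general sign, rewrite $\int_\Omega \beta(e^{2\phi}-1)\,dv = \int_\Omega \beta e^{2\phi}\,dv - \int_\Omega \beta\,dv$ and use $\beta e^{2\phi} \leq \beta^* e^{2\phi}$ to obtain $\lambda_1 \leq \beta^* - \int_\Omega \beta\,dv/\int_\Omega e^{2\phi}\,dv$, which settles (1) when $\int_\Omega \beta\,dv \geq 0$. If instead $\int_\Omega\beta\,dv < 0$, I would exploit the identity $\lambda_1(\Omega,A) = \lambda_1(\Omega,-A)$ (obtained by complex conjugation of eigenfunctions) and apply the same argument to $-A$: its field $-\beta$ has the same sup-norm $\beta^*$ and torsion function $-\phi$, giving $\lambda_1 \leq \beta^* + \int_\Omega \beta\,dv/\int_\Omega e^{-2\phi}\,dv \leq \beta^*$. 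Strict inequality persists because the pointwise bound $\beta \leq \beta^*$ (respectively $-\beta \leq \beta^*$) is attained throughout $\Omega$ only if $\beta$ is the constant $\pm\beta^*$, in which case the direct argument already yields a strict inequality: as $\phi \not\equiv 0$, one has $\int_\Omega e^{\pm 2\phi}\,dv > |\Omega|$.

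For (3) I would compare torsion functions. Since $\Delta\phi = \beta \leq \beta^*$ with Dirichlet data, the maximum principle gives $\phi \leq \beta^* \phi_1$, where $\phi_1$ solves $\Delta\phi_1 = 1$ with $\phi_1|_{\partial\Omega} = 0$. The classical Saint-Venant / Talenti symmetrization bound yields $\phi_1^* \leq |\Omega|/(4\pi)$, the value attained on the disk of equal area; hence $\phi^* \leq \beta^* |\Omega|/(4\pi)$, and inserting this into (2) proves (3). Part (4) is then immediate from (3) applied to the constant field produced by the standard potential \eqref{magnetic_potential}. Finally, for (5) I would note that the derivation of Theorem \ref{variable} only invoked the identity $|A_{can}|^2 = |\nabla\phi|^2$ and integration by parts, never the topology of $\Omega$; consequently the bound and all the estimates above transfer verbatim once one takes $A = A_{can}$.

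The main obstacle I anticipate is the strict inequality in (1) for sign-changing $\beta$: one has to combine the two Rayleigh bounds coming from $A$ and $-A$ and rule out the equality case by inspection of when $\beta$ can be constant. The Saint-Venant bound invoked in (3) is classical but must be applied with the correct normalization for our positive-definite convention on $\Delta$.
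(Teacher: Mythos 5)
Your proposal is correct, and for parts (2)--(5) it coincides with the paper's proof essentially line for line: the maximum principle gives $\phi\ge 0$, the crude bound $\int_\Omega e^{2\phi}\,dv\le|\Omega|e^{2\phi^*}$ gives (2), the comparison $\phi\le\beta^*\phi_1$ together with Talenti's bound $\phi_1^*\le|\Omega|/(4\pi)$ gives (3), and (4)--(5) are immediate. The only place where you genuinely diverge is part (1). The paper also exploits $\lambda_1(\Omega,A)=\lambda_1(\Omega,-A)$, but instead of splitting into cases according to the sign of $\int_\Omega\beta\,dv$, it \emph{adds} the two instances of Theorem \ref{variable} (for $(\beta,\phi)$ and $(-\beta,-\phi)$) to obtain
\[
\lambda_1(\Omega,A)\int_\Omega\cosh(2\phi)\le\int_\Omega\beta\sinh(2\phi)\le\beta^*\int_\Omega\sinh(|2\phi|)<\beta^*\int_\Omega\cosh(2\phi),
\]
where the final strict inequality is the pointwise fact $\cosh t-\sinh|t|=e^{-|t|}>0$. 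This makes strictness automatic and case-free, whereas your route has to rule out the equality configurations by hand (your discussion of when $\beta\le\beta^*$ is saturated is correct but is the fiddly part of your argument, and is exactly what the $\cosh/\sinh$ trick avoids). Both proofs buy the same conclusion; the paper's is shorter, yours is more elementary in that it never leaves the realm of one-sided comparisons. One shared caveat, which affects the paper's proof equally and is therefore not a defect of yours: when $\beta\equiv 0$ (so $\beta^*=0$) the strict inequality in (1) degenerates, since then $A$ is exact on a simply connected $\Omega$ and $\lambda_1(\Omega,A)=0=\beta^*$; both arguments implicitly assume $\beta\not\equiv 0$.
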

\begin{proof}  We start by proving 1). We note that replacing $A$ by $-A$, the eigenfunctions of problem \eqref{AB_N_0} are changed to their conjugates, but the spectrum remains the same.  From Theorem \ref{variable} we have
$$
\lambda_1(\Omega,A)\int_{\Omega}e^{2\phi}\leq \int_{\Omega}\beta(e^{2\phi}-1)
$$
and, at the same time, changing $\beta$ to $-\beta$ and $\phi$ to $-\phi$:
$$
\lambda_1(\Omega,A)\int_{\Omega}e^{-2\phi}\leq \int_{\Omega}-\beta(e^{-2\phi}-1) .
$$
Summing up the two inequalities we obtain
$$
\lambda_1(\Omega,A)\int_{\Omega}\cosh(2\phi)\leq \int_{\Omega}\beta\sinh(2\phi)\leq \beta^*\int_{\Omega}\abs{\sinh(2\phi)}
= \beta^*\int_{\Omega}\sinh(\abs{2\phi})<\beta^*\int_{\Omega}\cosh(2\phi).
$$
This proves \eqref{bound_general_0}.

\medskip

We pass to 2). Inequality \eqref{bound_general_1} follows immediately from \eqref{bound_general}: from the maximum principle, since $\beta\geq 0$, we have that $\phi\geq 0$, and we conclude by rough estimates.

\medskip

We prove 3). Inequality \eqref{bound_general_2}, is a consequence of the well-known isoperimetric inequality
$$
\max_{\overline{\Omega}}\phi_{\Omega}\leq \max_{\overline{B_R}}\phi_{B_R},
$$
where $\phi_{\Omega},\phi_{B_R}$ are the solutions of \eqref{torsion} with $\beta=1$ on $\Omega$ and $B_R$, respectively, see \cite{talenti}. Here $\Omega\subset\mathbb R^2$, and $B_R\subset\mathbb R^2$ is a disk of radius $R$ centered at $0$ with $|B_R|=|\Omega|$. In fact, $\phi_{R}(x)=\frac{1}{4}(R^2-|x|^2)$, and therefore $\phi_{B_R}^*:=\max_{\overline{B_R}}\phi_{B_R}=\frac{|\Omega|}{4\pi}$. Now, observe that $\Delta(\beta^*\phi_{\Omega})=\beta^*\geq\beta$ on $\Omega$ and $\beta^*\phi_{\Omega}=0$ on $\partial\Omega$. Hence, by the maximum principle we have $\phi\leq\beta^*\phi_{\Omega}$ on $\Omega$ and hence
$$
\phi^*=\max_{\overline{\Omega}}\phi\leq\beta^*\max_{\overline\Omega}\phi_{\Omega}\leq\frac{\beta^*|\Omega|}{4\pi}.
$$
\medskip

Next we consider 4):  inequality \eqref{bound_general_3} follows immediately from \eqref{bound_general_2}.

\medskip

The assertion 5) is straightforward. This concludes the proof.
\end{proof}

\section{Lower bounds for $\lambda_1$}\label{sec:lo}

The study of lower bounds for $\lambda_1(\Omega,\beta)$ is rather challenging. It is easy to produce small eigenvalues by  perturbing a domain $\Omega$ with a local perturbation near the boundary, namely, attaching to $\Omega$ a small Cheeger dumbbell, as for the usual Neumann problem for the Laplacian. On the other hand, one can produce examples of convex domains of large diameter and any measure with first eigenvalue either arbitrarily small or bounded away from zero. Also, there exist thin domains with first eigenvalue arbitrarily small or uniformly bounded away from zero as the thickness goes to zero. We have collected a series of  examples in Appendix \ref{sub:examples}.

\medskip The starting point of our analysis is \cite[Theorem 5.1]{kovarik}, which provides lower bounds for $\lambda_1(\Omega,\beta)$ in terms of $\beta,|\Omega|$, $\lambda_2^N(\Omega)$ (the second Neumann eigenvalue of the Laplacian on $\Omega$), and the inradius $\rho_{\Omega}$ of $\Omega$, defined by
\begin{equation}\label{inradius}
\rho_{\Omega}:=\sup_{x\in\Omega}\inf_{y\in\partial\Omega}|x-y|
\end{equation}
We recall it here for the reader's convenience.
\begin{thm}[{\cite[Theorem 5.1]{kovarik}}]\label{kov1}
Let $\Omega$ be bounded, simply connected domain in $\mathbb R^2$. Then
\begin{equation}\label{ko1}
\lambda_1(\Omega,\beta)\geq\frac{\pi}{4|\Omega|}\cdot \frac{\beta^2\rho_{\Omega}^4\lambda_2^N(\Omega)}{\beta^2\rho_{\Omega}^2+6\lambda_2^N(\Omega)}\,,\ \ \ {\rm if\ }\beta\leq\rho_{\Omega}^{-2}
\end{equation}
and
\begin{equation}\label{ko2}
\lambda_1(\Omega,\beta)\geq\frac{\pi}{4|\Omega|}\cdot \frac{\beta\rho_{\Omega}^2\lambda_2^N(\Omega)}{\beta+24\lambda_2^N(\Omega)}\,,\ \ \ {\rm if\ }\beta\geq\rho_{\Omega}^{-2},
\end{equation}
where $\lambda_2^N(\Omega)$ is the first positive eigenvalue of the Neumann Laplacian on $\Omega$ and $\rho_{\Omega}$ is the inradius of $\Omega$.
\end {thm}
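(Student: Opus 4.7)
The strategy is to combine two complementary estimates for a normalized first magnetic eigenfunction $u$ written in polar form $u = fe^{i\varphi}$: a Poincaré inequality controlling the modulus $f = |u|$, and a Stokes/flux argument controlling the phase $e^{i\varphi}$. The crucial geometric input is that on any disk contained in $\Omega$ the flux of the magnetic field is nontrivial, which forces a mismatch between $d\varphi$ and $A$ and turns into a lower bound for $\lambda_1$.

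Let $u$ be a first eigenfunction with $\int_\Omega|u|^2\,dv = 1$ and decompose $u = fe^{i\varphi}$ on $\{u\neq 0\}$. After a standard regularization (replacing $f$ by $\sqrt{f^{2}+\eps^{2}}$ and letting $\eps\to 0^{+}$) one obtains the identity
$$\lambda_1(\Omega,\beta) = \int_\Omega|\nabla f|^2\,dv + \int_\Omega f^2|\nabla\varphi - A|^2\,dv.$$
For the first summand, the Neumann--Poincaré inequality gives
$$\int_\Omega|\nabla f|^2\,dv \;\geq\; \lambda_2^{N}(\Omega)\bigl(1 - |\Omega|\,\bar f^{\,2}\bigr),\qquad \bar f := \frac{1}{|\Omega|}\int_\Omega f\,dv,$$
so this term is already large whenever $\bar f$ is small.

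For the second summand, fix a disk $B = B(x_0,\rho_\Omega)\subset\Omega$ provided by the inradius. On each concentric circle $\gamma_r = \partial B(x_0,r)$, Stokes's theorem applied to the closed $1$-form $d\varphi - A$ yields
$$\oint_{\gamma_r}(d\varphi - A) \;=\; 2\pi N(r) - \beta\pi r^2,$$
where $N(r)\in\Bbb Z$ is the winding number of $u/|u|$ along $\gamma_r$. Whenever $\beta\pi r^2 < \pi$ one has $|2\pi N(r) - \beta\pi r^2|\geq \beta\pi r^2$, and Cauchy--Schwarz on $\gamma_r$ gives $\int_{\gamma_r}|d\varphi - A|^2\,d\sigma \geq (\beta\pi r^2)^2/(2\pi r)$. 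Integrating in $r$ over $\bigl(0,\min\{\rho_\Omega,1/\sqrt{\beta}\}\bigr)$ produces an unweighted lower bound of order $\beta^{2}\rho_\Omega^{4}$ in the regime $\beta\leq\rho_\Omega^{-2}$ and of order $\beta\,\rho_\Omega^{2}$ in the regime $\beta\geq \rho_\Omega^{-2}$, which is exactly the correct scaling in \eqref{ko1} and \eqref{ko2} respectively.

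To reinstate the weight $f^{2}$ and to merge the two estimates, I would split $B$ along a threshold $t$: on $\{f\geq t\}\cap B$ the flux estimate is multiplied by $t^{2}$, while on $\{f < t\}\cap B$ the Poincaré inequality is invoked to force $\int|\nabla f|^{2}$ to be of order $\lambda_2^{N}(\Omega)$ times the squared deviation of $f$ from $\bar f$. Writing the resulting two lower bounds in terms of $\bar f$ and $t$, rewriting $\bar f$ using $\|u\|_2=1$ and $|\Omega|$, and optimizing over $t$ produces a harmonic-mean combination of the form $\frac{X Y}{X + cY}$, reproducing precisely the shape of the right-hand sides in \eqref{ko1}--\eqref{ko2}. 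The main obstacle I expect is the bookkeeping of the weight $f^{2}$ inside the Stokes argument and the treatment of the winding number $N(r)$ at radii where the nodal set of $u$ meets $\gamma_r$; this is handled by choosing $x_0$ generically in $B$ and using the co-area formula to reduce to a.e.\ $r$ for which $\gamma_r\cap\{u=0\}=\emptyset$, so that $N(r)$ is a bona fide integer and the quantization argument applies.
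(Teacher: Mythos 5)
First, a point of comparison: the paper does \emph{not} prove Theorem \ref{kov1}. It is quoted verbatim from \cite[Theorem 5.1]{kovarik} and used as a black box in Section \ref{sec:lo}, so there is no internal proof to measure your argument against. Judging the sketch on its own merits, the overall philosophy — polar decomposition $u=fe^{i\varphi}$, a Neumann--Poincar\'e bound for the modulus, flux quantization for the phase on circles inside the inscribed disk, and a harmonic-mean combination of the two — is the right one and is close in spirit to Kovařík's actual argument. Two steps, however, do not work as written.

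The first gap is quantitative and affects \eqref{ko2}. Your circle-by-circle estimate gives, for a.e.\ $r$, $\oint_{\gamma_r}|d\varphi-A|^2\,ds\geq \bigl(2\pi\,\mathrm{dist}(\beta r^2/2,\mathbb Z)\bigr)^2/(2\pi r)$. Integrating over $r\in(0,\rho_\Omega)$ and substituting $s=\beta r^2/2$ yields $\pi\int_0^{\beta\rho_\Omega^2/2}\mathrm{dist}(s,\mathbb Z)^2\,s^{-1}\,ds$, which is $O\bigl(\log(\beta\rho_\Omega^2)\bigr)$ when $\beta\rho_\Omega^2\gg 1$ (and only $O(1)$ if you stop at $r=1/\sqrt{\beta}$ as you propose). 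But \eqref{ko2}, in the limit where the Poincar\'e term is not binding, requires an unweighted magnetic term of size $\beta\rho_\Omega^2/24$, which is unboundedly larger. The one-dimensional phase-circulation mechanism saturates logarithmically in the large-flux regime; to get the linear-in-$\beta$ behaviour one needs a genuinely two-dimensional input, e.g.\ the commutator (supersymmetry) identity giving $\int|\nabla^A v|^2\geq\beta\int|v|^2$ for $v$ compactly supported in the inscribed disk, applied to $\chi u$ with a cutoff whose gradient costs $O(\rho_\Omega^{-2})\leq O(\beta)$, or equivalently the sharp magnetic lower bound $\sim\min\{\beta^2\rho_\Omega^2,\beta\}$ on the inscribed disk. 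Regime \eqref{ko1} is fine at this level: there all fluxes are below half a quantum and your computation correctly produces $\pi\beta^2\rho_\Omega^4/8$.

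The second gap is the reinstatement of the weight $f^2$. Multiplying the flux estimate by $t^2$ on $\{f\geq t\}\cap B$ is not legitimate, because the quantization $\oint_{\gamma_r}d\varphi\in 2\pi\mathbb Z$ uses the \emph{whole} circle; the circulation of $d\varphi-A$ restricted to $\{f\geq t\}\cap\gamma_r$ is not quantized. The correct move is $\oint_{\gamma_r}f^2|d\varphi-A|^2\geq(\min_{\gamma_r}f)^2\oint_{\gamma_r}|d\varphi-A|^2$, after which one must either bound $\min_{\gamma_r}f$ from below in terms of the mean of $f$, or treat separately the circles on which $f$ dips below the threshold and convert that into oscillation of $f$ feeding the $\int_\Omega|\nabla f|^2$ term. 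Your genericity/coarea remark only repairs the measurability of the winding number $N(r)$, not this weighting problem, which is precisely where the constants $6$ and $24$ and the prefactor $\pi/(4|\Omega|)$ come from. Since the result is imported from the literature, the clean resolution is to cite Kovařík's proof rather than to reprove it.
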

Note that for some domains the lower bounds given by Theorem \ref{kov1} are not optimal. The following example clarifies this.

 \begin{ex}\label{ex_kov}
 Let $\Omega=]-k,k[\times]-1/2,1/2[$, with $k\in\mathbb N$. Then $\rho_{\Omega}=1/4$. As $k\to+\infty$, $\lambda_2^N(\Omega)\to 0$, and therefore the lower bound given by \eqref{ko1}-\eqref{ko2} goes to $0$ as well. On the other hand, we have 
$$
\Omega={\rm Int}\bigcup_{l=-k}^k\overline {\Omega_l}
$$
where
$$
\Omega_l=]l,(l+1)[\times]-1/2,1/2[.
$$
Clearly, bounds \eqref{ko1}-\eqref{ko2} hold for  $\lambda_1(\Omega_l,\beta)$ with $\rho_{\Omega_l}=1/4$, $\lambda_2^N(\Omega_l)=\pi^2$ and $|\Omega_l|=1$. Therefore the same lower bound holds for $\Omega$ (see Theorem \ref{ko_imp}), and this lower bound does not depend on $k$, therefore is uniformly bounded away from $0$ as $k\to+\infty$.
\end{ex}
The direct application of Theorem \ref{kov1} to the previous example does not yield a good lower bound since the behavior of $\lambda_2^N(\Omega)$ and $\lambda_1(\Omega,\beta)$ drastically diverge as $k\to+\infty$: $\lambda_2^N(\Omega)$  vanishes (the area goes to $+\infty$) while $\lambda_1(\Omega,\beta)$ stays uniformly bounded away from zero (large area does not imply small eigenvalue). However, the use of a suitable covering of $\Omega$ and the application of Theorem \ref{kov1} on each piece of the covering, allow to improve the lower bound. This is the main idea behind the main result of this section. Before stating it, we need some preliminary results.

\begin{thm}\label{ko_imp}
Let $\Omega$ be a smooth bounded domain in $\mathbb R^2$ such that
$$
\Omega={\rm Int}\bigcup_{i=1}^N\overline{\Omega_i},
$$
where $\Omega_i\subset\Omega$ are subdomains such that a point $p \in \Omega$ is contained in at most $K$ subdomains. Then
\begin{equation}\label{lo_imp}
\lambda_1(\Omega,\beta)\geq\frac{1}{K}\min_{i=1,...,N}\lambda_1(\Omega_i,\beta).
\end{equation}
\end{thm}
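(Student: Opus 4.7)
The plan is to use the variational characterization \eqref{minmax} directly, testing the first eigenfunction of $\Omega$ against the Rayleigh quotient on each piece $\Omega_i$. The key point is that a magnetic Neumann Rayleigh inequality holds for \emph{any} $H^1$ function on $\Omega_i$ (no boundary condition needs to be imposed on $u$ along $\partial\Omega_i$, since Neumann conditions arise naturally in the weak formulation).

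Concretely, I would let $u \in H^1(\Omega)$ be a first eigenfunction associated with $\lambda_1(\Omega,\beta)$. Since $\Omega_i \subset \Omega$, the restriction $u|_{\Omega_i}$ lies in $H^1(\Omega_i)$, so by \eqref{minmax} applied on each $\Omega_i$,
\begin{equation*}
\lambda_1(\Omega_i,\beta)\int_{\Omega_i}|u|^2 \;\leq\; \int_{\Omega_i}|\nabla^A u|^2.
\end{equation*}
Setting $\mu := \min_{1\le i\le N}\lambda_1(\Omega_i,\beta)$, summing over $i$, and exploiting that $\mu>0$ so that $\sum_i\lambda_1(\Omega_i,\beta)\int_{\Omega_i}|u|^2 \geq \mu\sum_i\int_{\Omega_i}|u|^2$, I obtain
\begin{equation*}
\mu\sum_{i=1}^{N}\int_{\Omega_i}|u|^2 \;\leq\; \sum_{i=1}^{N}\int_{\Omega_i}|\nabla^A u|^2.
\end{equation*}

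Next, I would encode the covering hypothesis through the pointwise bound $1 \leq \sum_{i=1}^{N}\chi_{\Omega_i}(x) \leq K$, valid a.e.\ on $\Omega$. The lower bound gives $\sum_i\int_{\Omega_i}|u|^2 \geq \int_\Omega |u|^2$, while the upper bound gives $\sum_i\int_{\Omega_i}|\nabla^A u|^2 \leq K\int_\Omega |\nabla^A u|^2 = K\lambda_1(\Omega,\beta)\int_\Omega |u|^2$. Chaining these inequalities,
\begin{equation*}
\mu\int_\Omega |u|^2 \;\leq\; K\,\lambda_1(\Omega,\beta)\int_\Omega|u|^2,
\end{equation*}
and dividing by $\int_\Omega |u|^2 > 0$ yields \eqref{lo_imp}.

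There is no real obstacle in this argument; the only point deserving care is that the overlap condition is applied in two opposite directions (lower bound on the $L^2$ side, upper bound on the Dirichlet side), both of which follow from the same hypothesis $\sum_i \chi_{\Omega_i}\le K$ combined with $\bigcup_i\overline{\Omega_i}\supset\Omega$. I would also remark that no regularity of the subdomains beyond $H^1$-extendability of restrictions is needed, which is why the statement remains valid for fairly arbitrary coverings — a feature that is crucial for the intended application to the strip of Example~\ref{ex_kov}, where one splits a long rectangle into unit squares to bypass the vanishing of $\lambda_2^N$.
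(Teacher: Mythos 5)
Your proof is correct and follows essentially the same route as the paper: restrict the first eigenfunction of $\Omega$ to each $\Omega_i$, apply the min-max principle on each piece, sum, and use the covering property $1\le\sum_i\chi_{\Omega_i}\le K$ to compare both sides with the corresponding integrals over $\Omega$. No further comment is needed.
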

\begin{proof}
Let $u$ be an eigenfunction associated with $\lambda_1(\Omega,\beta)$. Then its restriction to $\Omega_i$ is a suitable test function for the min-max principle \eqref{minmax} for $\lambda_1(\Omega_i,\beta)$, which means:
$$
\lambda_1(\Omega_i,\beta)\int_{\Omega_i}|u|^2\leq\int_{\Omega_i}|\nabla^A u|^2.
$$
Summing over $i=1,...,N$ we get
\begin{equation}\label{sum0}
\sum_{i=1}^N\lambda_1(\Omega_i,\beta)\int_{\Omega_i}|u|^2\leq\sum_{i=1}^N\int_{\Omega_i}|\nabla^A u|^2.
\end{equation}
For the left-hand side of \eqref{sum0} we have
\begin{equation}\label{sum1}
\sum_{i=1}^N\lambda_1(\Omega_i,\beta)\int_{\Omega_i}|u|^2\geq \min_{j=1,...,N}\lambda_1(\Omega_j,\beta)\sum_{i=1}^N\int_{\Omega_i}|u|^2\geq \min_{j=1,...,N}\lambda_1(\Omega_j,\beta)\int_{\Omega}|u|^2,
\end{equation}
while for the right-hand side of \eqref{sum0} we have
\begin{equation}\label{sum2}
\sum_{i=1}^N \int_{\Omega_i} \vert \nabla^A u \vert^2 \le  K\int_{\Omega} \vert \nabla^A u \vert^2.
\end{equation}
Thanks to \eqref{sum1}, \eqref{sum2} and \eqref{sum0} we get
$$
\min_{j=1,...,N}\lambda_1(\Omega_j,\beta)\int_{\Omega}|u|^2\leq K\int_{\Omega} \vert \nabla^A u \vert^2.
$$
It follows that
$$
\frac{\int_{\Omega}\vert  \nabla^A   u\vert^2}{\int_{\Omega} \vert u\vert^2} \ge \frac{1}{K}\min_{j=1,...,N}\lambda_1(\Omega_j,\beta).
$$
 \end{proof}

We will look for coverings $\Omega={\rm Int}\bigcup_{i=1}^N\overline{\Omega_i}$, where each $\Omega_i$ is star-shaped with respect to some of its points. In order to combine Theorems \ref{kov1} and \ref{ko_imp} we need to have good estimates for $\lambda_2^N(\Omega_i)$. We can get it as a special case of a result of \cite{ChLi1997}:
\begin{thm}[{\cite[Theorem 1]{ChLi1997}}] \label{ChenLi} Let $\Omega \subset \mathbb R^2$ be a bounded domain. Assume that $\Omega$ is star-shaped with respect to a point $p \in \Omega$. Let $R$ be the radius of the largest ball centered at $p$ contained in $\Omega$ and $R_0$ be the radius of the smallest ball centered at $p$ containing $\Omega$. There exists a universal constant $C_1>0$ such that the first nonzero Neumann eigenvalue $\lambda_2^N(\Omega)$ has a lower bound given by
\begin{equation} \label{lowerneumann}
    \lambda_2^N(\Omega) \ge C_1\frac{R^2}{R_0^4}.
\end{equation}
    \end{thm}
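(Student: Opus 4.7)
The plan is to reduce the statement, via the variational characterization $\lambda_2^N(\Omega) = \inf\{\int_\Omega |\nabla u|^2/\int_\Omega u^2 : u \in H^1(\Omega),\ \int_\Omega u = 0\}$, to a Poincaré-type inequality of the form $\int_\Omega (u-\bar u)^2 \le C(R_0^4/R^2)\int_\Omega|\nabla u|^2$ for a suitable constant $\bar u$. Since zero-mean functions minimize the $L^2$-norm among all choices of additive constant, I would take $\bar u$ to be the average of $u$ over the inscribed disk $B(p,R)$; the Chen--Li lower bound then follows immediately by the min-max principle.

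Using star-shapedness, I would parameterize $\Omega$ in polar coordinates centered at $p$ as $\{(r,\omega): 0\le r\le \rho(\omega)\}$ with $R\le \rho(\omega)\le R_0$, and decompose $\Omega = B(p,R)\cup(\Omega\setminus B(p,R))$. On $B(p,R)$ the classical Poincaré inequality for the disk gives $\int_{B(p,R)}(u-\bar u)^2 \le CR^2\int_{B(p,R)}|\nabla u|^2$. On the annular complement, setting $v(\omega):=u(p+R\omega)$ for the radial trace on $\partial B(p,R)$, the fundamental theorem of calculus and Cauchy--Schwarz yield
\begin{equation*}
|u(p+s\omega) - v(\omega)|^2 \le (s-R)\int_R^s |\partial_r u(p+r\omega)|^2\,dr
\end{equation*}
for $s\in[R,\rho(\omega)]$. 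Integrating against the polar Jacobian $s\,ds\,d\omega$ and using $s\le R_0$ together with $r\ge R$ to pass from the measure $dr$ to $r\,dr$ controls $\int_{\Omega\setminus B(p,R)}|u-v|^2$ by $\int_\Omega|\nabla u|^2$ up to a geometric prefactor. To replace $v(\omega)$ by the constant $\bar u$, I would apply a scaled trace inequality on $\partial B(p,R)$, which combined with the interior Poincaré bound gives $\int_{\partial B(p,R)}|u-\bar u|^2\, ds\le CR\int_{B(p,R)}|\nabla u|^2$; this is then propagated into the annulus by multiplying by $\int_R^{\rho(\omega)}s\,ds\le R_0^2/2$ and dividing by $R$.

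The main obstacle is keeping careful track of the powers of $R$ and $R_0$ at each step so that the final Poincaré constant does not exceed $CR_0^4/R^2$. A naive application of Cauchy--Schwarz overshoots; the correct bookkeeping requires bounding the one-dimensional kernel $\int_R^{\rho(\omega)}(s-R)s\,ds$ by a controlled power of $R_0$, using division by $R$ to pass between radial measures, and combining the three contributions (interior Poincaré, radial annular estimate, trace term) so that the weakest exponent dominates (exploiting $R\le R_0$). All universal constants ultimately reduce to the Poincaré and trace constants of the unit disk, yielding the universal $C_1$, and no finer geometric input about $\Omega$ beyond star-shapedness is needed.
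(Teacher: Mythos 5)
Your proposal is correct, but it is worth noting that the paper does not prove this statement at all: Theorem \ref{ChenLi} is quoted verbatim from Chen--Li \cite{ChLi1997}, whose argument is carried out for compact Riemannian manifolds with boundary, and the paper only adds a remark on extending it to piecewise smooth Lipschitz domains by approximation. What you supply is therefore a genuinely different, self-contained and purely Euclidean proof. Your bookkeeping checks out: the interior Poincar\'e term contributes $CR^2$, the radial estimate on the annular region contributes $C R_0^3/R$ (from $\int_R^{\rho(\omega)}(s-R)s\,ds\le R_0^3/2$ together with $dr\le r\,dr/R$), and the trace term contributes $CR_0^2$ (since $\int_{\partial B(p,R)}|u-\bar u|^2\le CR\int_{B(p,R)}|\nabla u|^2$ by scaling, and the factor $R_0^2/(2R)$ from integrating over the annulus cancels the $R$); all three are dominated by $C R_0^4/R^2$ because $R\le R_0$, and the reduction to the eigenvalue bound via $\int_\Omega u^2=\min_c\int_\Omega(u-c)^2\le\int_\Omega(u-\bar u)^2$ for zero-mean $u$ is standard. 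In fact your argument yields the slightly stronger constant $CR_0^3/R$, which only strengthens the stated conclusion. The only caveats are routine: the radial fundamental-theorem-of-calculus step requires the absolute continuity of $H^1$ functions on almost every ray (standard Fubini-type property of Sobolev functions), and the passage from the Poincar\'e inequality to a statement about $\lambda_2^N$ presupposes discreteness of the Neumann spectrum, which is exactly the regularity issue the paper's accompanying remark addresses. What the citation buys the authors is generality (manifolds, weaker smoothness hypotheses handled by approximation); what your route buys is an elementary, fully explicit proof with a universal constant traceable to the Poincar\'e and trace constants of the unit disk.
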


\begin{rem}
The result of \cite{ChLi1997} is stated for compact manifold with smooth boundary. In particular it holds for smooth Euclidean domains. However, it is valid also for piecewise smooth, Lipschitz domains by approximation with smooth domains. 
\end{rem}

A combination of Theorems \ref{kov1} and \ref{ChenLi} allows to prove a lower bound for $\lambda_1(\Omega,\beta)$ for star-shaped domains in term of the inner and the outer radius.

\begin{prop}\label{lo_star}
Let $\Omega$ be domain which is star-shaped with respect to $p$, with $p$ as in Theorem \ref{ChenLi}, and such that $B(p,R)\subset\Omega\subset B(p,R_0)$ for some  $0<R<R_0$. Then
$$
\lambda_1(\Omega,\beta) \ge c\beta^2\frac{R^8}{R_0^6}\,,\ \ \ {\rm if\ }\beta\leq\rho_{\Omega}^{-2}
$$
and
$$
\lambda_1(\Omega,\beta) \ge c\frac{R^6\beta}{R_0^6(R^2\beta+1)}\,,\ \ \ {\rm if\ }\beta\geq\rho_{\Omega}^{-2},
$$
for some universal constant $c>0$.
\end{prop}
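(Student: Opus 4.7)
The plan is to apply the Kovařík lower bounds of Theorem \ref{kov1} to $\Omega$ and to use Theorem \ref{ChenLi} to control the Neumann eigenvalue $\lambda_2^N(\Omega)$ appearing in those bounds purely in terms of $R$ and $R_0$. Since $\Omega$ is star-shaped with respect to $p$ and contains $B(p,R)$, Theorem \ref{ChenLi} is applicable (the constant $R$ in its statement is our $R$, and the constant $R_0$ is our $R_0$). The three geometric inputs I will use throughout are:
\begin{equation*}
\rho_\Omega\geq R,\qquad |\Omega|\leq \pi R_0^2,\qquad \lambda_2^N(\Omega)\geq C_1\,\frac{R^2}{R_0^4}.
\end{equation*}
The first two are immediate from $B(p,R)\subset\Omega\subset B(p,R_0)$.

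\textbf{Decoupling the harmonic means.} Both \eqref{ko1} and \eqref{ko2} involve a harmonic-mean-type quotient of the form $\tfrac{xy}{x+ky}$ with $k\in\{6,24\}$. I will use the elementary bound
\begin{equation*}
\frac{xy}{x+ky}\;\geq\;\frac{1}{2k}\min(x,y)\qquad\text{for all }x,y,k>0,
\end{equation*}
which follows by splitting according to whether $x\leq ky$ or $x\geq ky$.

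\textbf{Case $\beta\leq\rho_\Omega^{-2}$.} Factoring $\rho_\Omega^2$ out of \eqref{ko1} and applying the decoupling inequality with $x=\beta^2\rho_\Omega^2$, $y=\lambda_2^N(\Omega)$, $k=6$, I obtain
\begin{equation*}
\lambda_1(\Omega,\beta)\;\geq\;\frac{\pi}{48\,|\Omega|}\,\rho_\Omega^2\min\bigl(\beta^2\rho_\Omega^2,\lambda_2^N(\Omega)\bigr)\;\geq\;\frac{c'\,R^4}{R_0^2}\min\!\Bigl(\beta^2,\ \frac{C_1}{R_0^4}\Bigr).
\end{equation*}
Two sub-cases finish the case. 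If $\beta^2\leq C_1/R_0^4$, the right-hand side equals $c'\beta^2R^4/R_0^2$, and since $R\leq R_0$ one has $R^4/R_0^2\geq R^8/R_0^6$. If instead $\beta^2>C_1/R_0^4$, the right-hand side equals $c'C_1R^4/R_0^6$; comparing with the target $c\beta^2R^8/R_0^6$ is equivalent to $\beta^2 R^4\leq c'C_1/c$, which is valid because the standing assumption $\beta\leq\rho_\Omega^{-2}\leq R^{-2}$ gives $\beta^2 R^4\leq 1$.

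\textbf{Case $\beta\geq\rho_\Omega^{-2}$.} The same scheme applied to \eqref{ko2} (with $x=\beta$, $y=\lambda_2^N(\Omega)$, $k=24$) yields
\begin{equation*}
\lambda_1(\Omega,\beta)\;\geq\;\frac{c''\,R^2}{R_0^2}\min\!\Bigl(\beta,\ \frac{C_1R^2}{R_0^4}\Bigr).
\end{equation*}
The target $\frac{R^6\beta}{R_0^6(R^2\beta+1)}$ satisfies the two crude bounds $\frac{R^6\beta}{R_0^6(R^2\beta+1)}\leq\frac{R^6\beta}{R_0^6}$ and $\frac{R^6\beta}{R_0^6(R^2\beta+1)}\leq\frac{R^4}{R_0^6}$. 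In the sub-case $\beta\leq C_1 R^2/R_0^4$ the lower bound is $c''R^2\beta/R_0^2$, and $R^2/R_0^2\geq R^6/R_0^6$ (since $R\leq R_0$) forces it above a constant multiple of the first crude bound. In the sub-case $\beta>C_1R^2/R_0^4$ the lower bound is $c''C_1 R^4/R_0^6$, which is comparable to the second crude bound. Choosing the smaller of the two universal constants completes the proof.

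\textbf{Where the difficulty lies.} There is no conceptual hurdle; the whole argument is a chain of elementary inequalities. The main obstacle is purely bookkeeping, namely checking that in each of the four sub-cases (two regimes of $\beta$ relative to $\rho_\Omega^{-2}$, each split according to the decoupling minimum) the resulting estimate indeed dominates the stated expression. The universal constant $c$ in the proposition is the minimum of the finitely many absolute constants produced along the way.
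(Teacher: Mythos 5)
Your proof is correct. The overall architecture is the same as the paper's: feed the Chen--Li bound $\lambda_2^N(\Omega)\ge C_1R^2/R_0^4$, together with $\rho_\Omega\ge R$ and $|\Omega|\le\pi R_0^2$, into the Kovařík bounds \eqref{ko1}--\eqref{ko2}. The one genuine divergence is how the denominator $\beta^2\rho_\Omega^2+6\lambda_2^N$ (resp. $\beta+24\lambda_2^N$) is handled. The paper controls it from above by invoking the Friedlander inequality $\lambda_2^N(\Omega)\le\lambda_1^D(\Omega)$ together with domain monotonicity of the Dirichlet eigenvalue ($\lambda_1^D(\Omega)\le\gamma/R^2$ since $B(p,R)\subset\Omega$), which yields the stated bounds in one pass with an explicit constant $c=C_1/(96\gamma)$. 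You instead use the elementary decoupling $\tfrac{xy}{x+ky}\ge\tfrac{1}{2k}\min(x,y)$ and then resolve four sub-cases; your sub-case closures (using $\beta^2R^4\le 1$ in the first regime via $\beta\le\rho_\Omega^{-2}\le R^{-2}$, and the two crude upper bounds on $\tfrac{R^6\beta}{R_0^6(R^2\beta+1)}$ in the second) all check out, and every constant produced is universal. Your route buys independence from the Friedlander inequality (a nontrivial external input) at the cost of bookkeeping; the paper's route buys a shorter, case-free derivation with a cleaner constant. Either is acceptable; note only that both implicitly use that a star-shaped domain is simply connected, as required by Theorem \ref{kov1}.
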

\begin{proof} 
By Theorem \ref{ChenLi} we have
$$
\lambda_2^N(\Omega) \ge C_1\frac{R^2}{R_0^4}.
$$
Moreover
$$
\lambda_2^N(\Omega)\leq\lambda_1^D(\Omega)\leq\frac{\gamma}{R^2}\,,\ \ \ \frac{\pi}{4 \vert\Omega\vert }\ge \frac{1}{4R_0^2}\,,\ \ \ \rho_{\Omega}\geq R,
$$
where
$\gamma:=\lambda_1^D(B(0,1))$ denotes the first Dirichlet eigenvalue of the disk of radius $1$. Recall that $\lambda_2^N(\Omega)\leq\lambda_1^D(\Omega)$ is the Friedlander inequality.

We use these inequalities in \eqref{ko1} to get
$$
\lambda_1(\Omega,\beta)\geq\frac{C_1\beta^2 R^8}{4R_0^6(\beta^2\rho_{\Omega}^2R^2+6\gamma)}
$$
when $\beta\leq\rho_{\Omega}^{-2}$. We note that in this hypothesis, $\beta^2\rho_{\Omega}^2R^2\leq\beta^2\rho_{\Omega}^4\leq 1$. Hence we deduce that for $\beta\leq\rho_{\Omega}^{-2}$
$$
\lambda_1(\Omega,\beta)\geq\frac{C_1\beta^2 R^8}{4R_0^6(1+6\gamma)}.
$$
Analogously, considering \eqref{ko2}, we get that
$$
\lambda_1(\Omega,\beta)\geq\frac{C_1\beta R^6}{4R_0^6(R^2\beta+24\gamma)}
$$
when $\beta\geq\rho_{\Omega}^{-2}$. The proof is concluded by observing that a suitable constant $c$ in the proposition is given by $c=\frac{C_1}{96\gamma}$.
\end{proof}

We recall the following

\begin{defn}A domain $\Omega$ satisfies the $\delta$-interior ball condition if, for any $x\in\partial\Omega$, there exists a ball
of radius $\delta$ tangent to $\partial\Omega$ at $x$ and entirely contained in $\Omega$.
\end{defn}

 For smooth domains, this is equivalent to saying that
the injectivity radius of the normal exponential map is at least $\delta$. Therefore any point of a segment hitting the boundary orthogonally at $p\in\partial\Omega$ minimizes the distance to the boundary up to distance $\delta$ to $p$.

\begin{defn}
Let $\Omega$ be a bounded domain with smooth boundary, and let $\eps>0$. A maximal collection of points $\mathcal P_{\eps}=\{p_1,...,p_n\}$ with the following properties
\begin{itemize}
\item ${\rm dist}(p_j,p_k)\geq\eps$ for all $j\ne k$,
\item ${\rm dist}(p_j,\partial\Omega)\geq\eps$ for all $j$,
\end{itemize}
is called a {\it maximal $\eps$-net}.
\end{defn}

The goal is to produce a general lower bound for the first eigenvalue $\lambda_1(\Omega,\beta)$ of domains $\Omega$ with the $\delta$-interior ball condition depending only on $\delta$ (and $\beta$). To this aim we cover $\Omega$ by star-shaped subdomains $\Omega_i$, and use Proposition \ref{lo_star} to control $\lambda_1(\Omega_i,\beta)$ and then Theorem \ref{ko_imp} to control $\lambda_1(\Omega,\beta)$. In the case of convex domains with the $\delta$-interior ball condition, a suitable covering is proved in  \cite[Lemma 11]{co_sa_agag}. We extend here this last result dropping the convexity assumption.

\begin{lem}\label{agag}
Let $\Omega$ be a bounded domain with smooth boundary and let $\mathcal P_{\eps}=\{p_1,...,p_n\}$ be a maximal $\eps$-net in $\Omega$. Assume that $\Omega$ satisfies the $\delta$-interior ball condition with $\delta\geq \eps$. Then $\Omega$ admits an open covering $\{\Omega_1,...,\Omega_n\}$, $\Omega=\bigcup_{i=1}^n{\Omega_i}$, with the following properties:
\begin{itemize}
\item every $\Omega_i$ is star-shaped with respect to some point $p_i\in\Omega_i$ and has piecewise smooth boundary.
\item For each $i=1,...,n$ one has $B(p_i,\eps/2)\subseteq\Omega_i\subseteq B(p_i,2\eps)$.
\item There exists a universal constant $M\in\mathbb N$ (not depending on $\Omega$) such that a point $x\in\Omega$ can be contained in at most $M$ of the domains $\Omega_i$.
\end{itemize}
\end{lem}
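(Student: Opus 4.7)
The plan is to define, for each point $p_i$ of the net, the visibility region from $p_i$ truncated at distance $2\eps$:
$$
\Omega_i := \{x \in B(p_i, 2\eps) : [p_i, x] \subset \Omega\}.
$$
By construction $\Omega_i$ is open, star-shaped with respect to $p_i$, and contained in $B(p_i, 2\eps)$. Since ${\rm dist}(p_i, \partial\Omega) \geq \eps$ by the $\eps$-net property, the convex ball $B(p_i, \eps)$ is contained in $\Omega$ and fully visible from $p_i$, so $B(p_i, \eps/2) \subset \Omega_i$. The boundary $\partial \Omega_i$ consists of pieces of $\partial\Omega$, pieces of $\partial B(p_i, 2\eps)$, and radial segments from $p_i$ along directions tangent to $\partial\Omega$; since $\partial\Omega$ is smooth, $\partial \Omega_i$ is piecewise smooth.

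The heart of the argument is to show $\bigcup_i \Omega_i = \Omega$, i.e.\ that every $x \in \Omega$ is visible from some sufficiently close $p_i$. If ${\rm dist}(x, \partial\Omega) \geq \eps$, maximality of the net gives a $p_i$ with $|x - p_i| < \eps$, so $[p_i, x] \subset B(p_i, \eps) \subset \Omega$ and $x \in \Omega_i$. If instead ${\rm dist}(x, \partial\Omega) < \eps \leq \delta$, then $x$ lies on the inward normal to its (necessarily unique) nearest boundary point $y$; letting $N$ be the inward unit normal at $y$ and $c := y + \delta N$ the center of the interior tangent disk, I introduce the shifted probe point
$$
c' := y + \eps N.
$$
From $|c - c'| = \delta - \eps$ together with $|c - z| \geq \delta$ for every $z \in \partial\Omega$, one obtains ${\rm dist}(c', \partial\Omega) \geq \eps$, so maximality yields some $p_i$ with $|p_i - c'| < \eps$. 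Both $x$ and $p_i$ lie in the convex ball $B(c', \eps) \subset \Omega$, so $[p_i, x] \subset \Omega$, and the triangle inequality combined with $|c' - x| = \eps - |x - y|$ produces $|p_i - x| < 2\eps$; hence $x \in \Omega_i$.

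For the bounded overlap, note that $x \in \Omega_i$ forces $p_i \in B(x, 2\eps)$; the $\eps$-separation of the net makes the balls $B(p_i, \eps/2)$ pairwise disjoint and all contained in $B(x, 5\eps/2)$, and a comparison of areas yields at most $M = 25$ such indices, independently of $\Omega$. I expect the boundary case of the covering to be the main obstacle: the shifted point $c'$ must be pulled back only to distance $\eps$ from $\partial\Omega$ rather than all the way to $c$ at distance $\delta$, since the naive choice of $c$ would only produce $|p_i - x| \leq \eps + \delta$, which may exceed $2\eps$ under the sole hypothesis $\delta \geq \eps$ and would thus violate the inclusion $\Omega_i \subset B(p_i, 2\eps)$.
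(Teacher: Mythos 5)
Your covering argument is correct, and it takes a genuinely different route from the paper. You keep the given $\eps$-net and define $\Omega_i$ as the visibility region of $p_i$ truncated at radius $2\eps$; the delicate point is showing that a point $x$ within distance $\eps$ of $\partial\Omega$ is visible from some net point at distance $<2\eps$, and your shifted probe point $c'=y+\eps N$ (rather than the center $c=y+\delta N$ of the full interior tangent disk) is exactly what makes both ${\rm dist}(c',\partial\Omega)\ge\eps$ and $|p_i-x|<2\eps$ come out, as you yourself observe at the end. The paper instead discards the given net and builds two new $\delta/2$-nets, one on the equidistant set $\partial\Omega_\delta$ and one in the deep interior $\{{\rm dist}(\cdot,\partial\Omega)\ge 3\delta/2\}$, covering the boundary collar by star-shaped unions of interior $\delta$-balls and the rest by plain balls. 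Your version is closer to the literal statement (the pieces are indexed by the given $p_i$, which are also the star centers), and your overlap bound $M=25$, via disjointness of the $B(p_i,\eps/2)$ inside $B(x,5\eps/2)$, is the same packing argument the paper delegates to a reference.

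The one step you assert rather than prove is that $\partial\Omega_i$ is piecewise smooth. For a visibility region this is not automatic: the ``shadow'' portion of $\partial\Omega_i$ consists of radial segments issued from points of $\partial\Omega$ where the ray from $p_i$ is tangent to the boundary, and for a boundary that is merely $C^\infty$ (not analytic) this tangency set can be infinite, so $\Omega_i$ could a priori carry infinitely many thin radial spikes and fail to be piecewise smooth, or even Lipschitz. This is not purely cosmetic, because the lemma is fed into Proposition \ref{lo_star} and hence into the Chen--Li bound of Theorem \ref{ChenLi}, whose validity beyond smooth domains is justified in the paper by approximation with piecewise smooth Lipschitz domains. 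The paper's construction, being a union of balls of fixed radius $\delta$, satisfies an interior ball condition at every boundary point and is tamer in this respect (though the paper is also terse here). A clean repair of your argument is to note that star-shapedness, the two ball inclusions, and the covering property are all stable under slightly shrinking each $\Omega_i$ to a star-shaped domain with polygonal or generic boundary, or under a generic perturbation of $p_i$ forcing finitely many tangencies; either way the quantitative conclusions of the lemma are unaffected.
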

\begin{proof}
We remark that the difficulty of covering $\Omega$ by star-shaped domains $\Omega_i$ lies in the region near the boundary. Indeed, far from the boundary it is easy to find a nice covering. We will proceed in two steps. The family $\{\Omega_i\}$ will be the union of a family $\{\Omega_{i,B}\}$ of domains having a non empty intersection with the boundary $\partial \Omega$ and a family of domains $\{\Omega_{i,I}\}$ which do not intersect the boundary. We will prove the result for $\varepsilon=\delta$; it clearly holds for $0<\varepsilon<\delta$ since the $\delta$-interior ball condition implies  the $\varepsilon$-interior ball condition for all $0<\varepsilon\leq\delta$.

\textbf{Step 1: construction of the domains $\{\Omega_{i,B}\}$.} Let $\partial\Omega_{\delta}$ be the equidistant set to $\partial\Omega$:
$$
\partial \Omega_{\delta}=\{ x\in \Omega: {\rm dist}(x,\partial \Omega)=\delta\}.
$$
By definition, if $x \in \partial \Omega_{\delta}$, then the ball $B(x,\delta) \subset \Omega$ is tangent to $\partial \Omega$ at at least one point.

Let $\mathcal P_1=\{x_1,...,x_n\}$ be a maximal $\frac{\delta}{2}$-net in $\partial \Omega_{\delta}$, that is ${\rm dist}(x_i,x_j)\ge \frac{\delta}{2}$ if $i\not=j$.

\medskip
Then, we define
$$
\Omega_{i,B}=\left\{\bigcup B(x,\delta): x\in \partial \Omega_{\delta} {\rm \ and\ } \ {\rm dist}(x,x_i)\le \frac{\delta}{2}\right\}.
$$
In particular, $x_i \in  B(x,\delta) $ for each $x\in \partial \Omega_{\delta}$ with ${\rm dist}(x,x_i)\le \frac{\delta}{2}$ so that $\Omega_{i,B}$ is star-shaped with respect to the point $x_i$ and
$$
B(x_i,\delta) \subset \Omega_{i,B} \subset B(x_i,2\delta).
$$
We have:
$$
\left\{x\in \Omega: {\rm dist}(x,\partial \Omega)\le \frac{3\delta}{2}\right\}\subset \cup_{i=1}^n \Omega_{i,B}.
$$
To see this, let $y \in \Omega$ with ${\rm dist}(y,\partial \Omega) \le \frac{3\delta}{2}$ and $y'\in \partial \Omega$ be such that ${\rm dist}(y,y')= {\rm dist}(y,\partial \Omega) $.
The line segment determined by $y'$ and $y$ cuts $\partial \Omega_{\delta}$ at a point $x$. By maximality of the net $\mathcal P_1$, there exists $x_i \in \mathcal P_1$ with ${\rm dist}(x,x_i)\le \frac{\delta}{2}$ and $B(x,\delta)\subset \Omega_{i,B}$. As ${\rm dist}(x,y) \le \delta$, we have $y \in \Omega_{i,B}$.

\textbf{Step 2: construction of the domains $\{\Omega_{i,I}\}$.} Let $\mathcal P_2=\{y_1,...,y_m\}$ be a maximal $\frac{\delta}{2}$-net of the domain $\{x\in \Omega: {\rm dist}(x,\partial \Omega)\ge \frac{3\delta}{2}\}$. Then, we choose the domain $\Omega_{i,I}$ to be the ball $B(y_i,\frac{\delta}{2})$. As the intersection with $\partial \Omega$ is empty, $\Omega_{i,I}$ is convex, and star-shaped with respect to $y_i$. By maximality, the domain $\{x\in \Omega: {\rm dist}(x,\partial \Omega)\ge \frac{3\delta}{2}\}$ is covered by $\cup_{i=1}^mB(y_i,\frac{\delta}{2})$. 

\medskip

It follows that the domain $\Omega$ is covered by the union of the domains $\{\Omega_{i,B}\}$ and  $\{\Omega_{i,I}\}$, which we denote by $\{\Omega_i\}_{i=1}^n$. From Step 1 and Step 2 it follows that for each $i=1,...,n$ there exists $p_i\in\Omega_i$ such that $B(p_i,\frac{\delta}{2})\subset\Omega_i\subset B(p_i,2\delta)$, and ${\rm dist}(p_i,p_j)\geq\frac{\delta}{2}$ for all $i\ne j$. It is then easy to deduce that there is a universal constant $M\in\mathbb N$ (not depending on $\Omega$) such that a point can be contained in at most $M$ of these domains (see for example Step 2 of the proof of Theorem 1 in \cite{co_sa_agag}).

\end{proof}

We introduce the following class of domains:
\begin{equation}\label{Adelta}
\mathcal A_{\delta}=\{\Omega\subset\mathbb R^2: \Omega{\rm\ is\ smooth,\  with\ }\delta{\rm-interior\ ball\ condition}\}.
\end{equation}

We are now ready to state the main result of this section
\begin{thm}\label{thm_lower}
There exist a universal constant $C>0$ such that, for all $\Omega\in \mathcal A_{\delta}$
\begin{enumerate}[i)]
\item $\lambda_1(\Omega,\beta)\geq C\beta^2\delta^2$ if $\beta\delta^2\leq 1$;
\item $\lambda_1(\Omega,\beta)\geq C\beta$ if $\beta\delta^2\geq 1$.
\end{enumerate}
\end{thm}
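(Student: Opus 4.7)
The plan is to combine three ingredients from this section: the covering Lemma~\ref{agag}, the star-shaped lower bound in Proposition~\ref{lo_star}, and the multiplicity comparison Theorem~\ref{ko_imp}. The key observation is that the scale $\varepsilon$ of the covering must be chosen \emph{adaptively}: $\varepsilon=\delta$ in case~i), and $\varepsilon=1/\sqrt{\beta}$ in case~ii). In both regimes one has $\varepsilon\leq\delta$ (trivially in case~i); in case~ii) because $\beta\delta^2\geq 1$), so the hypothesis of Lemma~\ref{agag} is satisfied and one obtains a covering $\Omega=\bigcup_i\Omega_i$ by star-shaped pieces with $B(p_i,\varepsilon/2)\subseteq\Omega_i\subseteq B(p_i,2\varepsilon)$ and a universal overlap multiplicity~$M$.

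For case~i), take $\varepsilon=\delta$ and apply Proposition~\ref{lo_star} on each $\Omega_i$ with $R=\delta/2$, $R_0=2\delta$. If $\beta\leq\rho_{\Omega_i}^{-2}$, the first bound of the proposition immediately gives $\lambda_1(\Omega_i,\beta)\geq c\beta^2\delta^2$. If instead $\beta\geq\rho_{\Omega_i}^{-2}$, the second bound reads $\lambda_1(\Omega_i,\beta)\geq c\beta/(c_1\delta^2\beta+c_2)$ with universal $c_1,c_2$; since $\beta\delta^2\leq 1$, the denominator is universally bounded, so $\lambda_1(\Omega_i,\beta)\geq c'\beta\geq c'\beta^2\delta^2$. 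Theorem~\ref{ko_imp} with $K=M$ then yields the required universal lower bound on $\lambda_1(\Omega,\beta)$.

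For case~ii), take $\varepsilon=1/\sqrt{\beta}$ and apply Proposition~\ref{lo_star} on each $\Omega_i$ with $R=1/(2\sqrt{\beta})$, $R_0=2/\sqrt{\beta}$. Now the ratio $R_0/R=4$ and the product $R^2\beta=1/4$ are both universal constants, and a short computation shows that \emph{both} alternatives in Proposition~\ref{lo_star} reduce to $\lambda_1(\Omega_i,\beta)\geq C\beta$ for a universal $C>0$, regardless of whether $\beta\leq\rho_{\Omega_i}^{-2}$ or $\beta\geq\rho_{\Omega_i}^{-2}$. A second application of Theorem~\ref{ko_imp} closes case~ii).

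The main obstacle, and the reason for the dichotomy in the statement, is precisely this choice of scale. Keeping the natural scale $\varepsilon=\delta$ for all $\beta$ would, once $\beta\delta^2\gg 1$, make the bound on each piece saturate at a quantity of order $1/\delta^2$, which is independent of $\beta$ and therefore much weaker than the target $C\beta$. Switching to the magnetic length $\varepsilon=1/\sqrt{\beta}$, which is available as a covering scale exactly when $\beta\delta^2\geq 1$ thanks to the $\delta$-interior ball hypothesis, forces $R^2\beta$ to be a universal constant on each piece and restores the linear dependence on $\beta$ that we need.
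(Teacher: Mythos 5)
Your proof is correct and follows essentially the same route as the paper: the same three ingredients (Lemma~\ref{agag}, Proposition~\ref{lo_star} with $R=\varepsilon/2$, $R_0=2\varepsilon$, and Theorem~\ref{ko_imp}) assembled in the same way. The paper first normalizes to $\beta=1$ via the scaling $\Omega\mapsto\sqrt{\beta}\Omega$ and covers at scale $\min\{\delta,1\}$, which after undoing the dilation is exactly your adaptive choice $\varepsilon=\min\{\delta,1/\sqrt{\beta}\}$; the two presentations are the same argument conjugated by a homothety.
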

\begin{proof}
We consider first the case $\beta=1$. Let $\Omega\in\mathcal A_{\delta}$ for some $\delta>0$. 

Assume first $\delta\leq 1$. From Lemma \ref{agag} we find that $\Omega$ admits an open covering $\{\Omega_1,...,\Omega_N\}$ with each $\Omega_j$ star-shaped  with respect to some $p_j\in\Omega_j$, and with  $B(p_j,\delta/2)\subseteq\Omega_j\subseteq B(p_j,2\delta)$, $j=1,...,N$. We apply Proposition \ref{lo_star} to each $\Omega_j$, and find that
\begin{equation}\label{lo1}
\lambda_1(\Omega_j,1)\geq c_1\delta^2\,,\ \ \ {\rm if\ }1\leq\rho_{\Omega_j}^{-2}
\end{equation}
and
\begin{equation}\label{lo2}
\lambda_1(\Omega_j,1)\geq c_1\,,\ \ \ {\rm if\ }1 \geq\rho_{\Omega_j}^{-2},
\end{equation}
where $\rho_{\Omega_j}$ is the inradius of $\Omega_j$ and $c_1$ is a universal constant given by $2^{-14}c$. From Theorem \ref{ko_imp}, Lemma \ref{agag}, and the fact that $\delta\leq 1$, we deduce that
\begin{equation}\label{lo3}
\lambda_1(\Omega,1)\geq\frac{c_1}{M}\delta^2.
\end{equation}

Let now $\delta>1$. The fact that $\Omega\in\mathcal A_{\delta}$ implies that $\Omega\in\mathcal A_1$, and with the same arguments above we deduce that 
\begin{equation}\label{lo4}
\lambda_1(\Omega,1)\geq\frac{c_1}{M}.
\end{equation}
In conclusion, from \eqref{lo3} and \eqref{lo4} we get that, for all $\delta>0$,
\begin{equation}\label{lo5}
\lambda_1(\Omega,1)\geq\frac{c_1}{M}\min\{\delta^2,1\}.
\end{equation}
Finally, by applying \eqref{lo5} to the domain $\sqrt{\beta}\Omega$, which has rolling radius $\sqrt{\beta}\delta$, we get that for any $\beta>0$ 
\begin{equation}\label{lo6}
\lambda_1(\Omega,\beta)=\beta\lambda_1(\sqrt{\beta}\Omega,1)\geq\frac{c_1}{M}\min\{\beta^2\delta^2,\beta\}.
\end{equation}
This yields the final result once that we set $C:=c_1/M$.
\end{proof}

\begin{rem}\label{rem:lowerthm}
Let us discuss the bounds of Theorem \ref{thm_lower}. We note that a lower bound behaves like $\beta^2\delta^2$ as $\delta\to 0^+$. This is consistent with the examples of domains with small width (for which $\lambda_1(\Omega,\beta)$ behaves like $\beta^2\delta^2$), see Theorem \ref{small_width}, see also Appendices \ref{sec:thin} and \ref{sub:examples}. Concerning specifically the lower bound $i)$, we note that the behavior is quadratic in $\delta$ and in $\beta$. We have seen in Theorem \ref{up_B_diam} that this quadratic behavior in $\beta$ for small $\beta$ is correct, in particular when $R_{\Omega}\sqrt{\beta}\leq 1$, where $R_{\Omega}$ denotes the circumradius of $\Omega$. As for the linear behavior in $\beta$ of the lower bound $ii)$, this is correct in view of the asymptotic behavior of $\lambda_1(\Omega,\beta)$ as $\beta\to +\infty$, see \eqref{asympt_B}. In any case, we remark that the relevant quantity in our bounds is the behavior of the product $\sqrt{\beta}\delta$ (and not of $\delta$ or $\beta$ alone).
\end{rem}

One may wonder if the hypothesis on the $\delta$-interior ball condition is too restrictive, and if just having a large inradius would imply a large lower bound. This is not the case as we can see in the following example of a convex domain with large inradius and small first eigenvalue.

\begin{ex}\label{ex:tr}
Let $T$ be a triangle with base $\{(x_1,0):x_1\in(-1/2,1/2)\}$ and height of length $L$ (namely, the segment $(0,x_2)$ with $x_2\in(0,L)$). Then, the inradius is uniformly bounded from below and the first eigenvalue vanishes as $L\to+\infty$. To see this, consider the subsets $T'=\{(x_1,x_2)\in T:x_2\in(L-2\sqrt{L},L)\}$ and $T''=\{(x_1,x_2)\in T:x_2\in(L-2\sqrt{L},L-\sqrt{L})\}$. We can build a function $u$ supported on $T'$ with arbitrarily small Rayleigh quotient as follows. Take $u(x_1,x_2)=e^{-\frac{i\beta}{2}x_1x_2}\phi(x_2)$ where $\phi(x_2)\equiv 1$ on $T'\setminus T''$, $\phi(L-2\sqrt{L})=0$ and $\phi(x_2)$ is linear in $T''$. Standard computations (see also Example \ref{conv_small}) show then that $\lambda_1(T,\beta)\leq C L^{-1}$.
\end{ex}

However, Theorem \ref{thm_lower} does not apply to the case of a triangle as in the previous example, since it is not smooth (its rolling radius is $\delta=0$). For non-smooth domains we would rather use Proposition \ref{lo_star}, which gives a good lower bound for the domain in Example \ref{ex:tr}.

\section{Semiclassical estimates for averages of eigenvalues}\label{sec:sums}

In this section we prove asymptotically sharp lower bounds for the first Riesz mean of magnetic eigenvalues (upper bounds for averages) in the spirit of Kr\"oger \cite{Kro}. This will imply upper bounds on single eigenvalues. We recall that the first Riesz mean $R_1$ of magnetic eigenvalues is defined by $R_1(z)=\sum_{j=1}^{\infty}(z-\lambda_j)_+$, where $a_+=\max\{0,a\}$. Through all this section we shall drop the dependance of $\lambda_j(\Omega,\beta)$ on $\beta,\Omega$ and simply write $\lambda_j$. We also denote by $[a]$ the integer part of $a\in\mathbb R$ and by $\psi(a):=a-[a]-\frac{1}{2}$, $a\in\mathbb R$, the fluctuation function.
\begin{thm}\label{main_averages}
Let $\Omega$ be a bounded domain of $\mathbb R^2$.  For all $z\geq 0$ we have
\begin{equation}\label{main_R1}
R_1(z)\geq \frac{|\Omega|}{8\pi}z^2-\frac{\beta^2|\Omega|}{2\pi}\psi^2\left(\frac{z}{2\beta}+\frac{1}{2}\right).
\end{equation}
Equivalently, for all $k\in\mathbb N$, $k\geq 1$ we have
\begin{equation}\label{up_2}
\frac{1}{k}\sum_{j=1}^k\lambda_j\leq\frac{2\pi k}{|\Omega|}+R\left(\frac{2\pi k}{\beta|\Omega|}\right),
\end{equation}
where
\begin{equation}\label{up_2_rem}
R(X)=\frac{\beta}{X}(X-[X])([X]-X+1)
\end{equation}
In particular, for all $X\geq 0$
\begin{equation}\label{up_2_rem_est}
0\leq R(X)\leq\frac{\beta}{4X},
\end{equation}
and $R(X)=\beta(1-X)$ for $0\leq X\leq 1$. Therefore, for $k\leq\frac{\beta|\Omega|}{2\pi}$ we have
\begin{equation}\label{up_1}
\frac{1}{k}\sum_{j=1}^k\lambda_j\leq\beta.
\end{equation}
\end{thm}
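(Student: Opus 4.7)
The natural approach is to apply the averaged variational principle (AVP, Theorem \ref{thm:AVP}) to a family of magnetic coherent states built from the Landau eigenstates of the full-plane magnetic Hamiltonian. Let $\phi_n \in L^2(\mathbb{R}^2)$ denote the $n$-th Landau eigenstate of the standard magnetic Laplacian on $\mathbb{R}^2$, normalized to $\|\phi_n\|=1$ and with eigenvalue $(2n+1)\beta$, and define the magnetic translates
$$
f_{n,q}(x) := e^{i\frac{\beta}{2}(q_1 x_2 - q_2 x_1)}\,\phi_n(x-q),\qquad q \in \mathbb{R}^2,\ n \in \mathbb{N}.
$$
A short gauge computation shows that $A$ in \eqref{magnetic_potential} minus the gradient of the phase $\frac{\beta}{2}(q_1 x_2 - q_2 x_1)$ equals precisely $A(\cdot - q)$, whence $\nabla^A$ transforms covariantly under this magnetic translation and one has the pointwise identities $|f_{n,q}(x)|^2 = |\phi_n(x-q)|^2$ and $|\nabla^A f_{n,q}(x)|^2 = |(\nabla^A \phi_n)(x-q)|^2$. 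Integrating in $q$ by translation invariance, together with $\|\phi_n\|^2=1$ and $\|\nabla^A\phi_n\|^2=(2n+1)\beta$, gives at every $x \in \mathbb{R}^2$
$$
\int_{\mathbb{R}^2}|f_{n,q}(x)|^2\,dq = 1,\qquad \int_{\mathbb{R}^2}|\nabla^A f_{n,q}(x)|^2\,dq = (2n+1)\beta.
$$

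Next, fix $N \geq 1$ and feed the family $\{f_{n,q}\}_{0\le n<N,\,q\in\mathbb{R}^2}$, equipped with the parameter measure $\frac{\beta}{2\pi}\,dq$ on each of the $N$ Landau levels, into the AVP. The frame constant is at most $1$: the operator $\frac{\beta}{2\pi}\int |f_{n,q}\rangle\langle f_{n,q}|\,dq$ is the orthogonal projector $P_n$ onto the $n$-th Landau level in $L^2(\mathbb{R}^2)$, so extending any Neumann eigenfunction $u_j$ by zero yields $\frac{\beta}{2\pi}\sum_{n=0}^{N-1}\int|\langle u_j,f_{n,q}\rangle|^2\,dq = \langle (\sum_{n<N} P_n)u_j,u_j\rangle \leq \|u_j\|^2 = 1$. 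Integrating the two identities above over $\Omega$ by Fubini and applying the AVP produces
$$
R_1(z) \;\geq\; \frac{\beta|\Omega|}{2\pi}\bigl(Nz - \beta N^2\bigr),\qquad N \geq 1.
$$
Choosing the integer $N = [z/(2\beta)+1/2]$ (the best integer maximizer of the right-hand side, which is a quadratic in $N$) and writing $\psi = z/(2\beta) - N$, one checks $\beta N(z-\beta N) = z^2/4 - \beta^2\psi^2$, which recovers \eqref{main_R1}. The range $z<\beta$ corresponds to $N=0$ and the estimate reduces to the trivial $R_1(z)\geq 0$, still consistent with \eqref{main_R1}.

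Finally, \eqref{up_2} follows from \eqref{main_R1} by Legendre duality: $\sum_{j=1}^k \lambda_j = \max_{z\geq 0}(kz - R_1(z)) \leq \max_{z\geq 0}(kz - \mathrm{LB}(z))$, where $\mathrm{LB}(z)$ denotes the right-hand side of \eqref{main_R1}. Since $\mathrm{LB}$ is piecewise linear on the Landau intervals $[(2n-1)\beta,(2n+1)\beta)$ with slope $\beta|\Omega|n/(2\pi)$, the slope of $z\mapsto kz - \mathrm{LB}(z)$ changes sign exactly once. For $k\leq \beta|\Omega|/(2\pi)$ this slope is non-positive on every interval with $n\geq 1$, so the maximum equals $k\beta$, is attained at $z=\beta$, and gives \eqref{up_1}. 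For $k > \beta|\Omega|/(2\pi)$ the maximum sits at the breakpoint $z=(2n^\ast+1)\beta$ with $n^\ast = \lceil 2\pi k/(\beta|\Omega|)\rceil - 1$; setting $X := 2\pi k/(\beta|\Omega|)$, a direct algebraic simplification identifies this value as $\frac{2\pi k^2}{|\Omega|} + k\,R(X)$ with $R$ as in \eqref{up_2_rem}, and \eqref{up_2_rem_est} is immediate from $0\leq (X-[X])([X]-X+1)\leq \tfrac14$. The one delicate point of the whole argument is the symmetric-gauge covariance of $\nabla^A$ under the magnetic translation, which makes the $q$-integrated energy density spatially constant and prevents any boundary term on $\partial\Omega$ from spoiling the Fubini step.
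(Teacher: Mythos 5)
Your argument is correct and reaches the paper's key intermediate inequality $R_1(z)\geq\frac{\beta|\Omega|}{2\pi}(Nz-\beta N^2)$ (inequality \eqref{R1_1} in the text, with $N=L+1$) by a genuinely different route. The paper feeds the AVP with the fixed-center Laguerre eigenfunctions $v_{n,l}$, sums $|v_{n,l}|^2$ over the degeneracy index $n$ within each Landau level via the explicit Laguerre identity of Lemma \ref{sum_laguerre} (obtaining a constant plus an exponentially decaying polynomial remainder), and then kills the remainder by translating the center to infinity; you instead take one representative $\phi_n$ per Landau level and integrate its magnetic translates over the translation parameter $q$, so that the $q$-averaged densities $\int|f_{n,q}(x)|^2dq$ and $\int|\nabla^Af_{n,q}(x)|^2dq$ are \emph{exactly} constant in $x$ by translation invariance --- your gauge-covariance computation for the symmetric gauge \eqref{magnetic_potential} is right, and this cleanly removes both the Laguerre summation and the limiting argument. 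The trade-off is that your frame bound rests on the assertion that $\frac{\beta}{2\pi}\int|f_{n,q}\rangle\langle f_{n,q}|\,dq$ equals the Landau-level projector $P_n$: this coherent-state resolution of identity is standard, but it is the one nontrivial input of your approach and needs justification (e.g., the operator commutes with all magnetic translations, which act irreducibly on each Landau level by Stone--von Neumann, so it is a scalar multiple of $P_n$; the scalar is $1$ because the diagonal of its kernel is $\frac{\beta}{2\pi}\int|\phi_n(x-q)|^2dq=\frac{\beta}{2\pi}$, matching the diagonal of $P_n$), whereas the paper only needs the elementary $L^2(\mathbb R^2)$-orthonormality of the family $\{v_{n,l}\}$. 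From \eqref{R1_1} onward --- optimization over the integer $N$ to get \eqref{main_R1}, and the Legendre-duality computation yielding \eqref{up_2} and \eqref{up_1} --- your argument coincides with the paper's.
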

Before proving the theorem, we state a few remarks and consequences. We observe that Theorem \ref{main_averages} implies bounds on single eigenvalues, as in \cite{Kro}.

\begin{cor}\label{single_bounds}
For any $k\in\mathbb N$ we have
\begin{equation}\label{single_0}
\lambda_{k+1}\leq\frac{4\pi}{|\Omega|}\left(k+\sqrt{k^2+\frac{\beta^2|\Omega|^2}{4\pi^2}\psi^2\left(\frac{\lambda_{k+1}}{2\beta}+\frac{1}{2}\right)}\right)
\end{equation}
In particular, since $0\leq\psi^2(a)\leq\frac{1}{4}$ for all $a\in\mathbb R$,
\begin{equation}\label{single_1}
\lambda_{k+1}\leq\frac{8\pi k}{|\Omega|}+\beta.
\end{equation}
\end{cor}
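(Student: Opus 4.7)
\medskip

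My plan is to derive both inequalities by evaluating the lower bound \eqref{main_R1} on the Riesz mean at the specific value $z=\lambda_{k+1}$, turning the inequality into a quadratic constraint on $\lambda_{k+1}$ that can be solved explicitly. This is the standard route from a Berezin/Li--Yau-type bound on $R_1$ to a bound on individual eigenvalues, adapted here to the magnetic setting where an extra oscillating term appears.

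\textbf{Step 1: Upper bound on $R_1(\lambda_{k+1})$.} Since the eigenvalues are ordered and non-negative, only the first $k$ terms of the series defining $R_1(\lambda_{k+1})$ contribute (terms with $\lambda_j = \lambda_{k+1}$ give zero). Hence
\[
R_1(\lambda_{k+1}) \;=\; \sum_{j=1}^{k}(\lambda_{k+1}-\lambda_j) \;\leq\; k\,\lambda_{k+1}.
\]

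\textbf{Step 2: Combine with the lower bound \eqref{main_R1}.} Applied at $z=\lambda_{k+1}$, Theorem \ref{main_averages} gives
\[
\frac{|\Omega|}{8\pi}\lambda_{k+1}^{2} \;-\; \frac{\beta^{2}|\Omega|}{2\pi}\,\psi^{2}\!\left(\frac{\lambda_{k+1}}{2\beta}+\frac{1}{2}\right) \;\leq\; R_1(\lambda_{k+1}) \;\leq\; k\,\lambda_{k+1}.
\]
Viewing the resulting inequality as a quadratic in $\lambda_{k+1}$ (with positive leading coefficient $|\Omega|/(8\pi)$), and taking the larger root, yields
\[
\lambda_{k+1} \;\leq\; \frac{4\pi}{|\Omega|}\left(k+\sqrt{\,k^{2}+\frac{\beta^{2}|\Omega|^{2}}{4\pi^{2}}\,\psi^{2}\!\left(\frac{\lambda_{k+1}}{2\beta}+\frac{1}{2}\right)}\,\right),
\]
which is precisely \eqref{single_0}. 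Note that although $\lambda_{k+1}$ appears on the right, this is not circular: we are merely stating an inequality the eigenvalue must satisfy.

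\textbf{Step 3: Derivation of \eqref{single_1}.} Since $\psi(a)\in[-\tfrac12,\tfrac12]$ for every $a\in\mathbb{R}$, one has $\psi^{2}\leq \tfrac14$. Substituting and using the elementary inequality $\sqrt{a^{2}+b^{2}}\leq a+b$ valid for $a,b\geq 0$ with $a=k$ and $b=\beta|\Omega|/(4\pi)$, I obtain
\[
\lambda_{k+1} \;\leq\; \frac{4\pi}{|\Omega|}\!\left(k+\sqrt{k^{2}+\frac{\beta^{2}|\Omega|^{2}}{16\pi^{2}}}\right) \;\leq\; \frac{4\pi}{|\Omega|}\!\left(2k+\frac{\beta|\Omega|}{4\pi}\right) \;=\; \frac{8\pi k}{|\Omega|}+\beta.
\]

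There is no real obstacle here: the content lies entirely in Theorem \ref{main_averages}, and the corollary is an algebraic consequence. The only point to be careful about is the implicit nature of \eqref{single_0} (via the $\psi^{2}$-term depending on $\lambda_{k+1}$), but this is harmless because we only use the uniform bound $\psi^{2}\leq 1/4$ to obtain the clean closed-form estimate \eqref{single_1}.
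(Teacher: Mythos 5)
Your proposal is correct and follows essentially the same route as the paper: evaluate the lower bound \eqref{main_R1} at $z=\lambda_{k+1}$, bound $R_1(\lambda_{k+1})=\sum_{j=1}^k(\lambda_{k+1}-\lambda_j)\leq k\lambda_{k+1}$, solve the resulting quadratic inequality for \eqref{single_0}, and then use $\psi^2\leq 1/4$ together with $\sqrt{a^2+b^2}\leq a+b$ for \eqref{single_1}. The only cosmetic difference is that the paper treats $k=0$ separately by invoking the earlier bound $\lambda_1<\beta$, whereas your argument handles all $k$ uniformly, which is also fine.
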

\begin{proof}
For $k=0$ we have already proved that $\lambda_1<\beta$ in Theorem \ref{up_B_diam}. We consider \eqref{main_R1} with $z=\lambda_{k+1}$. Then for the left-hand side of \eqref{main_R1} we have
$$
R_1(\lambda_{k+1})=\sum_{j=1}^k(\lambda_{k+1}-\lambda_j)=k\lambda_{k+1}-\sum_{j=1}^k\lambda_j\leq k\lambda_{k+1}.
$$
Hence we obtain the inequality
$$
k\lambda_{k+1}\geq \frac{\lambda_{k+1}^2|\Omega|}{8\pi}-\frac{\beta^2|\Omega|}{2\pi}\psi^2\left(\frac{\lambda_{k+1}}{2\beta}+\frac{1}{2}\right)
$$
which yields \eqref{single_0}. Since $0\leq\psi^2(a)\leq\frac{1}{4}$ for all $a\in\mathbb R$, and using $\sqrt{a^2+b^2}\leq a+b$ for all $a,b\geq 0$, we obtain \eqref{single_1}.
\end{proof}

%\begin{rem}
%Note that, if $|\Omega|=\frac{2\pi k}{n\beta}$, $n\in\mathbb N$, $n\geq 1$, then for all $k\geq 1$ we have
%$$
%\frac{1}{k}\sum_{j=1}^k\lambda_j\leq n\beta
%$$
%Moreover, for all $k\in\mathbb N$, $k\geq 1$ we  have
%$$
%\frac{1}{k}\sum_{j=1}^k\lambda_j\leq\frac{2\pi k}{|\Omega|}+\beta.
%$$
%\end{rem}

\begin{rem}
Let us compare the bounds given by Theorem \ref{main_averages} with the corresponding bounds for the Neumann Laplacian in two dimensions proved in \cite{Kro}:
\begin{equation}\label{bounds_kro}
\frac{1}{k}\sum_{j=1}^k\lambda_j^N\leq\frac{2\pi k}{|\Omega|}.
\end{equation}
Here by $\lambda_j^N$ we denote the Neumann eigenvalues of the Laplacian on $\Omega$.
Clearly, in our situation, bounds of the form \eqref{bounds_kro} cannot hold for any $k$ and any value of $|\Omega|$: the inequality is clearly violated when $k=1$ and $|\Omega|\to+\infty$. Hence it is natural to distinguish the regime $|\Omega|\geq\frac{2\pi k}{\beta}$ and $|\Omega|\leq\frac{2\pi k}{\beta}$. Also, the appearing of oscillations in the remainders of the estimates of Theorem \ref{main_averages} seems to be natural for this operator (see Appendices \ref{sec:thin} and \ref{sec:disk}). The semiclassical estimates of Theorem \ref{main_averages} should be compared with those for the magnetic Dirichlet Laplacian proved in \cite{erdos}. In particular, for magnetic Dirichlet eigenvalues a lower bound on eigenvalues averages is given by the Weyl term, as for the Laplacian.
\end{rem}

Another Corollary of Theorem \ref{main_averages} is the following lower bound on the trace of the magnetic heat kernel, which is asymptotically sharp as $t\to 0^+$.
\begin{cor}\label{heat_kernel}
For all $t>0$ we have
$$
\sum_{j=1}^{\infty}e^{-\lambda_jt}\geq\frac{\beta|\Omega|}{4\pi \sinh(\beta t)}
$$
\end{cor}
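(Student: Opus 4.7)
The plan is to express the heat trace as a Laplace transform of the Riesz mean $R_1$ and then insert the lower bound proved in Theorem \ref{main_averages}. The starting identity is
$$
\sum_{j=1}^{\infty} e^{-\lambda_j t} \;=\; t^2 \int_0^\infty R_1(z)\, e^{-zt}\, dz,
$$
which one obtains by writing $e^{-\lambda_j t}/t^2 = \int_{\lambda_j}^\infty (z-\lambda_j) e^{-zt}\, dz$ and summing over $j$ (equivalently: integrate by parts twice against the counting function $N(z)=R_1'(z)$; the boundary terms vanish since $R_1(0)=0$ and $R_1(z)=O(z^2)$).

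The next step exploits the piecewise structure of the sawtooth appearing in \eqref{main_R1}. For $z \in [(2n-1)\beta,\,(2n+1)\beta)$ with integer $n\ge 0$, one has $\psi\!\left(\tfrac{z}{2\beta}+\tfrac12\right)=\tfrac{z}{2\beta}-n$, whence
$$
R_1(z) \;\ge\; \frac{|\Omega|}{8\pi}\bigl[z^2-(z-2n\beta)^2\bigr] \;=\; \frac{n\beta|\Omega|}{2\pi}(z-n\beta).
$$
On the first interval $[0,\beta)$ (corresponding to $n=0$) this is just the trivial bound $R_1(z)\ge 0$, so only the intervals with $n\ge 1$ will contribute to the final integral.

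It then remains to compute, for each $n\ge 1$, the integral
$$
\frac{n\beta|\Omega|}{2\pi}\, t^2 \int_{(2n-1)\beta}^{(2n+1)\beta}(z-n\beta)\, e^{-zt}\, dz
$$
by elementary integration by parts, and sum over $n$. My expectation is that after reindexing the terms that carry a factor of $\beta t$ (the coefficients $n(n-1)$ from the left endpoint and $n(n+1)$ from the right endpoint align after a shift $n\mapsto n-1$), those higher-order pieces telescope to zero, while the remaining lower-order pieces collapse through a similar shift to
$$
\frac{\beta|\Omega|}{2\pi}\sum_{m=0}^\infty e^{-(2m+1)\beta t} \;=\; \frac{\beta|\Omega|}{2\pi}\cdot\frac{1}{2\sinh(\beta t)} \;=\; \frac{\beta|\Omega|}{4\pi\sinh(\beta t)},
$$
which is exactly the asserted bound.

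The only delicate step is this last bookkeeping—checking that the $\beta t$-pieces cancel \emph{exactly}, so that what survives is the clean Landau-level series. A reassuring conceptual check is that $\frac{\beta}{4\pi\sinh(\beta t)}$ is precisely the on-diagonal value of the heat kernel of the Landau Hamiltonian on all of $\mathbb{R}^2$ with constant field $\beta$ (whose spectrum consists of the Landau levels $(2n+1)\beta$ with density $\beta/(2\pi)$ per unit area); so the corollary asserts that the magnetic Neumann heat trace on $\Omega$ dominates the ``bulk'' heat trace obtained by integrating the free heat kernel over $\Omega$, in exact analogy with the classical Kac-type inequality for the Neumann Laplacian.
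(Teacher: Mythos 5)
Your proof is correct and is exactly the paper's (one-line) argument---Laplace transforming the Riesz-mean lower bound \eqref{main_R1}---carried out in full: the identity $\sum_j e^{-\lambda_j t}=t^2\int_0^\infty R_1(z)e^{-zt}\,dz$, the piecewise evaluation of the sawtooth, and the telescoping all check out. The bookkeeping you flag as delicate can be bypassed entirely by observing that your piecewise lower bound is precisely $\frac{\beta|\Omega|}{2\pi}\sum_{l\ge1}\left(z-\beta(2l-1)\right)_+$ (inequality \eqref{R1_2} in the paper) and applying $t^2\int_0^\infty(z-a)_+e^{-zt}\,dz=e^{-at}$ termwise, which yields $\frac{\beta|\Omega|}{2\pi}\sum_{l\ge1}e^{-\beta(2l-1)t}=\frac{\beta|\Omega|}{4\pi\sinh(\beta t)}$ directly.
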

\begin{proof}
The inequality follows by Laplace transforming inequality \eqref{main_R1}.
\end{proof}

%\begin{figure}[H]
%\includegraphics[width=0.6\textwidth]{rem.pdf}
%\caption{Plot of $\frac{1}{X}(X-[X])([X]-X+1)$ for $X>1$ (blue), of $\frac{1}{4X}$ (orange), and of $1-X$ (green).}
%\end{figure}

The proof of Theorem \ref{main_averages} relies on the so-called {\it averaged variational principle} of Harrell-Stubbe \cite{HaSt14}, which is an efficient way of recovering Kr\"oger's result \cite{Kro}, and can be easily applied in various situations. We recall it here for the reader's convenience.

\begin{thm}\label{thm:AVP}
Let $H$ be a self-adjoint operator in a Hilbert space $(\mathcal{H}, \langle\cdot,\cdot,\rangle_\mathcal{H})$ with discrete spectrum, made of eigenvalues denoted by
\[
\omega_1 \leq \omega_2 \leq \cdots \leq \omega_j \leq \cdots
\]
 with corresponding orthonormalized eigenvectors $\{g_j\}_{j \in \mathbb{N}\setminus\{0\}}$.
The closed quadratic form corresponding to $H$ is denoted $Q(\varphi,\varphi)$ for any $\varphi$ in
the quadratic form domain $\mathcal{Q}(H) \subset \mathcal{H}$. Let $f_p \in \mathcal{Q}(H)$ be a family
of vectors indexed by a variable $p$ ranging over a measure space $(\mathfrak{M}, \Sigma, \sigma)$. Suppose  that $\mathfrak{M}_0$ is a measurable subset of $\mathfrak{M}$.
Then for any $z \in \mathbb{R}$,
\begin{equation}\label{AVPrm}
\sum_{j \in \mathbb{N}}(z-\omega_j)_+ \int_{\mathfrak{M}}\left| \langle g_j, f_p \rangle_{\mathcal{H}}
\right|^2 \, d\sigma \geq \int_{\mathfrak{M}_0} \left( z\|f_p \|_{\mathcal{H}}^2-Q(f_p,f_p) \right)\,d\sigma,
\end{equation}
provided that the integrals converge. Here $a_+$ denotes the positive part of a real number $a$.
\end{thm}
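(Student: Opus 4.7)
The plan is to reduce the inequality to a termwise comparison after expanding $f_p$ in the orthonormal basis of eigenvectors $\{g_j\}$. For each fixed $p$, set $c_j(p) := \langle g_j, f_p\rangle_{\mathcal{H}}$. Since $f_p \in \mathcal{Q}(H)$, Parseval's identity and the spectral theorem give
\begin{equation*}
\|f_p\|_{\mathcal{H}}^2 = \sum_{j} |c_j(p)|^2,
\qquad
Q(f_p, f_p) = \sum_{j} \omega_j\,|c_j(p)|^2,
\end{equation*}
both series converging absolutely (the second in the usual sense after splitting the finitely many negative $\omega_j$). Subtracting, I get the pointwise identity
\begin{equation*}
z\,\|f_p\|_{\mathcal{H}}^2 - Q(f_p, f_p) \;=\; \sum_{j} (z - \omega_j)\,|c_j(p)|^2.
\end{equation*}

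Next I integrate this identity over $\mathfrak{M}_0$. Under the convergence hypothesis stated in the theorem, Fubini-Tonelli (applied after grouping the finitely many indices with $\omega_j \le z$ and the remaining tail where $z - \omega_j < 0$) allows me to swap sum and integral, giving
\begin{equation*}
\int_{\mathfrak{M}_0}\!\bigl(z\,\|f_p\|_{\mathcal{H}}^2 - Q(f_p,f_p)\bigr)\,d\sigma(p)
\;=\; \sum_{j}(z - \omega_j)\int_{\mathfrak{M}_0}|c_j(p)|^2\,d\sigma(p).
\end{equation*}
The heart of the argument is now a termwise comparison between the $j$-th summand on the right and the corresponding summand $(z-\omega_j)_+\int_{\mathfrak{M}}|c_j(p)|^2\,d\sigma(p)$ that appears on the left-hand side of \eqref{AVPrm}. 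Since $\mathfrak{M}_0 \subset \mathfrak{M}$ and $|c_j(p)|^2 \geq 0$, I distinguish two cases: if $z - \omega_j \geq 0$, then enlarging the domain of integration only increases the value, so $(z-\omega_j)\int_{\mathfrak{M}_0}|c_j|^2\,d\sigma \leq (z-\omega_j)_+\int_{\mathfrak{M}}|c_j|^2\,d\sigma$; if $z - \omega_j < 0$, then the left-hand side is nonpositive while the right-hand side is zero, and the inequality again holds. Summing in $j$ yields precisely \eqref{AVPrm}.

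The main technical obstacle I anticipate is the justification of the sum–integral interchange, since the terms $(z-\omega_j)|c_j(p)|^2$ change sign in $j$. This is handled by first isolating the finite set $J_z := \{ j : \omega_j \le z\}$ (finite because the spectrum is discrete and bounded below by $\omega_1$), for which the swap is trivial, and then treating the tail $j \notin J_z$, where all terms $(z-\omega_j)|c_j(p)|^2$ have the same sign and Tonelli's theorem applies unconditionally. The hypothesis "provided that the integrals converge" guarantees that the resulting series of integrals on both sides are well defined, completing the proof.
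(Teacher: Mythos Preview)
Your proof is correct and follows the standard argument for the averaged variational principle. Note, however, that the paper does not actually prove this theorem: it is stated as a recalled result from Harrell--Stubbe \cite{HaSt14} and used as a tool, so there is no ``paper's own proof'' to compare against. Your expansion in the eigenbasis, the pointwise identity $z\|f_p\|^2 - Q(f_p,f_p) = \sum_j (z-\omega_j)|c_j(p)|^2$, and the termwise comparison distinguishing $\omega_j \le z$ from $\omega_j > z$ constitute exactly the argument given in the original reference.
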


We state the following Lemma which contains some known facts on eigenfunctions of $\Delta_A$ on $\mathbb R^2$.

\begin{lem}\label{laguerrelem}
Let $u_{n,l}\in C^{\infty}(\mathbb R^2)$ be defined in polar coordinates $(r,t)$ by
$$
u_{n,l}(r,t):=e^{-\frac{\beta r^2}{4}}r^n L_l^n\left(\frac{r^2 \beta}{2}\right)e^{int},
$$
where by $L_l^n(y)$ we denote the associated Laguerre polynomial, namely
\begin{equation}\label{laguerre}
L_l^n(y)=\sum_{i=0}^l(-1)^i\binom{l+n}{l-i}\frac{y^i}{i!}=y^{-n}\frac{e^y}{l!}\frac{d^l}{dy^l}(y^{l+n}e^{-y})
\end{equation}
with $l,n\in\mathbb N$ (in particular, $L^n_0(y)=1$). Then 

\begin{enumerate}[i)]
\item $\Delta_A u_{n,l}=\beta (1+2l) u_{n,l}$ on $\mathbb R^2$: the functions $u_{n,l}$ are eigenfunctions of $\Delta_A$ on $\mathbb R^2$ with eigenvalue $\beta(1+2l)$. Each eigenspace has infinite dimension. 
\item $|\nabla^Au_{n,l}|^2=\beta(1+2l)|u_{n,l}|^2-\frac{1}{2}\Delta|u_{n,l}|^2$.
\item $\int_{\mathbb R^2}u_{n,l}\overline {u_{m,k}}=0$ if $m\ne n$ or $l\ne k$.
\item $\int_{\mathbb R^2}|u_{n,l}|^2=\pi\left(\frac{2}{\beta}\right)^{n+1}\frac{(l+n)!}{l!}=:c_{n,l}^2$.
\item Let $v_{n,l}:=\frac{u_{n,l}}{c_{n,l}}$; then $\int_{\mathbb R^2}v_{n,l}\overline{v_{m,k}}=\delta_{mn}\delta_{lk}$, hence $\{v_{n,l}\}_{n,l\in\mathbb N}$ is an orthonormal system in $L^2(\mathbb R^2)$.
\end{enumerate}
\end{lem}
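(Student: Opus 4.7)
The plan is to verify the five items essentially by direct computation, leveraging the classical theory of associated Laguerre polynomials. The underlying fact is that the magnetic Laplacian with constant field on $\mathbb R^2$ is unitarily equivalent (via a gauge transformation that brings one back to the standard rotationally symmetric potential, which is already the case here) to a two-dimensional harmonic-oscillator type operator, so its eigenfunctions and eigenvalues are the ``Landau levels.'' I would treat i) first, as the remaining items are either formal consequences or classical identities, and i) is the only genuine computation.

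For i), I would apply the polar expression for $\Delta_A$ given in \eqref{laplacian_polar} to $u_{n,l}(r,t) = e^{-\beta r^2/4}\,r^n\,L_l^n(r^2\beta/2)\,e^{int}$. The angular part is immediate: $\partial_t u_{n,l} = in\,u_{n,l}$, producing contributions $-n^2/r^2$ and $-n\beta$. Introducing $y = r^2\beta/2$ in the radial piece, and writing the radial factor as $e^{-y}\,y^{n/2}\,L_l^n(y)$ times constants, the eigenvalue equation $\Delta_A u_{n,l} = \mu\,u_{n,l}$ is equivalent to the associated Laguerre ODE
\begin{equation*}
y\,\frac{d^2 L_l^n}{dy^2}(y) + (n+1-y)\,\frac{dL_l^n}{dy}(y) + l\,L_l^n(y) = 0,
\end{equation*}
which is satisfied by definition of $L_l^n$, with $\mu = \beta(1+2l)$. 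The infinite multiplicity of each Landau level is automatic since for fixed $l$ the functions $\{u_{n,l}\}_{n\in\mathbb N}$ are all eigenfunctions with the same eigenvalue and are linearly independent (distinct angular momenta).

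For ii), I would prove the pointwise identity for any (sufficiently smooth) eigenfunction of $\Delta_A$. Expanding $|\nabla^A u|^2 = |\nabla u|^2 + |A|^2|u|^2 + 2\,\mathrm{Im}(\bar u\,\langle du,A\rangle)$ and combining with $\Delta_A u = \Delta u + |A|^2 u + 2i\langle du, A\rangle$ (using $\mathrm{div}\,A = 0$) together with the elementary identity $\Delta |u|^2 = 2\,\mathrm{Re}(\bar u \Delta u) - 2|\nabla u|^2$ (valid for the positive Laplacian), one obtains the universal identity
\begin{equation*}
|\nabla^A u|^2 = \mathrm{Re}(\bar u\,\Delta_A u) - \tfrac{1}{2}\Delta |u|^2.
\end{equation*}
Plugging in $\Delta_A u_{n,l} = \beta(1+2l)\,u_{n,l}$ from i) yields the claim.

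For iii), iv) and v), orthogonality in $(m,n)$ is immediate from the factor $e^{i(n-m)t}$ integrated over $[0,2\pi]$. For $n=m$, the substitution $y = r^2\beta/2$, $r\,dr = dy/\beta$, reduces the radial integral to the classical orthogonality/normalization relation for the associated Laguerre polynomials,
\begin{equation*}
\int_0^{\infty} e^{-y}\,y^n\,L_l^n(y)\,L_k^n(y)\,dy = \frac{(l+n)!}{l!}\,\delta_{lk},
\end{equation*}
which after the substitution produces exactly $c_{n,l}^2 = \pi(2/\beta)^{n+1}(l+n)!/l!$, giving iii) and iv); then v) is merely the definition $v_{n,l} = u_{n,l}/c_{n,l}$. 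The only mildly subtle step is i), and even there the difficulty is purely bookkeeping: matching coefficients between the radial part of $\Delta_A$ in polar coordinates and the associated Laguerre ODE in the variable $y = r^2\beta/2$.
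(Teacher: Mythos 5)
Your proof is correct and follows exactly the route the paper intends: the paper's own proof is a one-line deferral to ``standard calculus'' using the polar form \eqref{laplacian_polar} of $\Delta_A$ together with classical Laguerre identities, which is precisely what you carry out (reduction of i) to the associated Laguerre ODE forcing $\mu=\beta(2l+1)$, the universal identity $|\nabla^Au|^2=\mathrm{Re}(\bar u\,\Delta_Au)-\tfrac12\Delta|u|^2$ for ii), and the weighted orthogonality $\int_0^\infty e^{-y}y^nL_l^nL_k^n\,dy=\frac{(l+n)!}{l!}\delta_{lk}$ after the substitution $y=\beta r^2/2$ for iii)--v), which indeed reproduces $c_{n,l}^2$). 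One cosmetic remark: with the stated conventions the cross term in your expansion of $|\nabla^Au|^2$ should be $-2\,\mathrm{Im}\bigl(\bar u\,\langle du,A\rangle\bigr)$ rather than $+2\,\mathrm{Im}\bigl(\bar u\,\langle du,A\rangle\bigr)$, but this cancels consistently against $\mathrm{Re}\bigl(\bar u\cdot 2i\langle du,A\rangle\bigr)$ and the final identity you use is the correct one.
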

The proof follows from standard calculus, using the expression of $\Delta^A$ in polar coordinates \eqref{laplacian_polar} (see also \cite{bauman}).

The next lemma establishes a basic inequality for $R_1(z)$ which is the cornerstone of the proof of Theorem \ref{main_averages}.

\begin{lem} For all $z\geq 0$ we have
\begin{equation}\label{R1_2}
\sum_{j=1}^{\infty}(z-\lambda_j)_+
\geq\frac{\beta|\Omega|}{2\pi}\sum_{l=1}^{\infty}(z-\beta(2l-1))_+.
\end{equation}
\end{lem}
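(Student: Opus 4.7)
The plan is to apply the averaged variational principle (Theorem \ref{thm:AVP}) with test functions obtained by magnetic translation of the Landau eigenfunctions $v_{0,l}$ introduced in Lemma \ref{laguerrelem}. For $p=(p_1,p_2)\in\mathbb{R}^2$, let
\[
T_p u(x) := e^{i\frac{\beta}{2}(p_1 x_2 - p_2 x_1)}\,u(x-p)
\]
denote the magnetic translation, a unitary operator on $L^2(\mathbb{R}^2)$ which commutes with $\Delta_A$. In particular $T_p v_{0,l}$ is an eigenfunction of $\Delta_A$ on $\mathbb{R}^2$ with eigenvalue $\beta(1+2l)$, and $|T_p v_{0,l}(x)|^2 = |v_{0,l}(x-p)|^2$. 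I will use the resolution of identity
\[
\sum_{l=0}^{\infty}\frac{\beta}{2\pi}\int_{\mathbb{R}^2}\bigl|\langle \phi,\,T_p v_{0,l}\rangle_{L^2(\mathbb{R}^2)}\bigr|^2\,dp \;=\; \|\phi\|^2_{L^2(\mathbb{R}^2)}, \qquad \phi \in L^2(\mathbb{R}^2),
\]
whose $l$-th summand equals $\|P_l\phi\|^2$, with $P_l$ the spectral projector of $\Delta_A$ on $\mathbb{R}^2$ onto the $l$-th Landau level.

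For each fixed $l\in\mathbb{N}$, I apply Theorem \ref{thm:AVP} to the magnetic Neumann Laplacian on $\Omega$, with $\mathfrak{M}=\mathfrak{M}_0=\mathbb{R}^2$, $d\sigma = \frac{\beta}{2\pi}\,dp$, and test functions $f_p := T_p v_{0,l}\big|_\Omega \in H^1(\Omega)$. The identity of Lemma \ref{laguerrelem}(ii) extends to $T_p v_{0,l}$ because it is a general consequence of the relation $|\nabla^A u|^2 = \mathrm{Re}(\bar u\,\Delta_A u) - \tfrac12\Delta|u|^2$ valid for any smooth $u$. It gives
\[
z\|f_p\|^2_{L^2(\Omega)} - Q(f_p,f_p) = \bigl(z-\beta(1+2l)\bigr)\!\int_\Omega\! |v_{0,l}(x-p)|^2\,dx \;+\; \tfrac{1}{2}\!\int_\Omega\! \Delta_x |v_{0,l}(x-p)|^2\,dx.
\]
Integrating in $p$ against $\frac{\beta}{2\pi}\,dp$ and applying Fubini, the first term contributes $\frac{\beta|\Omega|}{2\pi}(z-\beta(1+2l))$ since $\int_{\mathbb{R}^2}|v_{0,l}(x-p)|^2\,dp = \|v_{0,l}\|^2 = 1$, while the second term vanishes: the inner integral $\int_{\mathbb{R}^2}|v_{0,l}(x-p)|^2\,dp \equiv 1$ is constant in $x$, so its Laplacian is zero.

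Turning to the left-hand side of \eqref{AVPrm}, extend each normalized Neumann eigenfunction $g_j$ by zero to $\tilde g_j\in L^2(\mathbb{R}^2)$; then the coefficient of $(z-\lambda_j)_+$ for the $l$-th application equals $\|P_l \tilde g_j\|^2$. Summing the AVP inequalities over all $l \geq 0$ (they are trivial when $\beta(1+2l)>z$) and swapping sums,
\[
\sum_j (z-\lambda_j)_+ \sum_{l=0}^{\infty}\|P_l \tilde g_j\|^2 \;\geq\; \frac{\beta|\Omega|}{2\pi}\sum_{l=0}^{\infty}\bigl(z-\beta(1+2l)\bigr)_+ .
\]
The resolution of identity gives $\sum_l \|P_l\tilde g_j\|^2 = \|\tilde g_j\|^2 = 1$, so the left-hand side equals $R_1(z)$; reindexing $l\mapsto l-1$ on the right yields exactly \eqref{R1_2}. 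The main technical ingredient is the resolution of identity for magnetic coherent states: it is classical (Perelomov-type construction from the unitary ray representation $p\mapsto T_p$), and can be verified within each Landau level using the generating function of the associated Laguerre polynomials appearing in Lemma \ref{laguerrelem}.
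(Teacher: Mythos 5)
Your argument is correct, and it implements the paper's overall strategy (averaged variational principle tested against Landau-level eigenfunctions of density $\beta/2\pi$ per level) by a genuinely different technical route. The paper takes the discrete orthonormal family $v_{n,l}$ over all angular momenta $n$, controls the left-hand side of \eqref{AVPrm} by Bessel's inequality, and then must (a) compute $\sum_n |v_{n,l}|^2=\frac{\beta}{2\pi}\bigl(1+e^{-\beta|x|^2/2}P_l(\beta|x|^2/2)\bigr)$ via the Laguerre summation of Lemma \ref{sum_laguerre}, and (b) recenter the family at $x_0$ and let $|x_0|\to\infty$ to kill both the correction $P_l$ and the term $\int_\Omega\Delta(\cdots)$. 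You instead take, for each level $l$, the continuous orbit $\{T_p v_{0,l}\}_{p\in\mathbb R^2}$ under magnetic translations; the role of Bessel's inequality is played by the tight-frame identity $\frac{\beta}{2\pi}\int_{\mathbb R^2}|\langle\phi,T_pv_{0,l}\rangle|^2\,dp=\|P_l\phi\|^2$, and the density $\frac{\beta}{2\pi}\int_{\mathbb R^2}|v_{0,l}(x-p)|^2\,dp\equiv\frac{\beta}{2\pi}$ is \emph{exactly} constant, so the Laplacian term vanishes identically and no limiting argument is needed. What your route buys is the elimination of Lemma \ref{sum_laguerre} and of the translation-to-infinity step; what it costs is the coherent-state resolution of identity, which you assert rather than prove. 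That fact is indeed classical, but to make the proof self-contained you would need either the irreducibility of the magnetic translation representation on each Landau level (then Schur's lemma gives $S_l=c_lP_l$, and $c_l=1$ follows from $\frac{\beta}{2\pi}\int_{\mathbb R^2}e^{-\beta|p|^2/2}L_l\bigl(\tfrac{\beta|p|^2}{2}\bigr)^2dp=\int_0^\infty e^{-t}L_l(t)^2\,dt=1$), or a direct computation; the net amount of special-function input is comparable to the paper's Lemma \ref{sum_laguerre}.

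Two presentational points. First, ``summing the AVP inequalities over all $l\ge0$'' cannot be taken literally, since the right-hand sides without positive parts sum to $-\infty$; you must sum only over $l$ with $\beta(2l+1)\le z$ and then enlarge the left-hand side using $\sum_{l\le L}\|P_l\tilde g_j\|^2\le\|\tilde g_j\|^2=1$ -- your parenthetical shows you know this, but it should be said explicitly. Second, you should record why $\int_{\mathbb R^2}\int_\Omega\Delta_x|v_{0,l}(x-p)|^2\,dx\,dp$ may be computed by Fubini and differentiation under the integral sign (Gaussian decay of $|v_{0,l}|^2$ and all its derivatives). Neither is a gap in substance.
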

\begin{proof}
We apply the {\it averaged variational principle} \eqref{AVPrm} with  $\mathcal H=L^2(\Omega)$,  $H=\Delta_A$, $Q(f,f)=\int_{\Omega}|\nabla^Af|^2$, $\mathcal Q(H)=H^1(\Omega)$, $\omega_j=\lambda_j$, $g_j=u_j$, where $u_j$ are the the $L^2(\Omega)$-normalized eigenfunctions associated with the eigenvalues $\lambda_j$ of $\Delta_A$ on $\Omega$,
 $\mathfrak M=\mathbb N\times\mathbb N$ with the counting measure $\sigma$, and $\mathfrak M_0=\mathbb N\times \{0,...,L\}$ for some $L\in\mathbb N$, and $f_p=v_{n,l}$. Then \eqref{AVPrm} reads
\begin{equation}
\sum_{j=1}^{\infty}\left[(z-\lambda_j)_+\sum_{l=0}^{\infty}\sum_{n=0}^{\infty}\left|\int_{\Omega} v_{n,l}\overline {u_j}\right|^2\right]\geq\sum_{l=0}^L\sum_{n=0}^{\infty}\int_{\Omega}\left(z|v_{n,l}|^2 - |\nabla^Av_{n,l}|^2\right).
\end{equation}
Now, since $\{v_{n,l}\}_{n,l\in\mathbb N}$ is an orthonormal family in $L^2(\mathbb R^2)$, we have that 
$$
\sum_{l=0}^{\infty}\sum_{n=0}^{\infty}\left|\int_{\Omega} v_{n,l}\overline {u_j}\right|^2\leq\int_{\Omega}|u_j|^2=1
$$
hence, by Lemma \ref{laguerrelem}, ii)
\begin{multline}\label{main_average}
\sum_{j=1}^{\infty}(z-\lambda_j)_+\geq\int_{\Omega}\left(z\sum_{l=0}^L\sum_{n=0}^{\infty}|v_{n,l}|^2 - \sum_{l=0}^L\sum_{n=0}^{\infty}|\nabla^Av_{n,l}|^2\right)\\
=\int_{\Omega}\left(\sum_{l=0}^L\sum_{n=0}^{\infty}(z-\beta(2l+1))|v_{l,n}|^2 + \frac{1}{2}\sum_{l=0}^L\sum_{n=0}^{\infty}\Delta|v_{n,l}|^2\right)\\
=
\sum_{l=0}^L(z-\beta(2l+1))\sum_{n=0}^{\infty}\int_{\Omega}|v_{l,n}|^2 + \frac{1}{2}\sum_{l=0}^L\sum_{n=0}^{\infty}\int_{\Omega}\Delta|v_{n,l}|^2.
\end{multline}
We will prove in Lemma \ref{sum_laguerre} here below that
$$
\sum_{n=0}^{\infty}|v_{n,l}|^2=\frac{\beta}{2\pi}\left(1+e^{-\frac{\beta |x|^2}{2}}P_l\left(\frac{\beta |x|^2}{2}\right)\right),
$$
where $P_l(y)$ is a polynomial of degree $2l-1$ in the variable $y$. The convergence is uniform on any compact set. In particular, $\sum_{n=0}^{\infty}\Delta|v_{n,l}|^2=\frac{\beta}{2\pi}\Delta\left(e^{-\frac{\beta |x|^2}{2}}P_l(\beta |x|^2/2)\right)$. 

Then, from \eqref{main_average} we get that, for all $L\in\mathbb N$ and $z\geq 0$,
\begin{multline}\label{R1_0}
\sum_{j=1}^{\infty}(z-\lambda_j)_+
\geq\int_{\Omega}\frac{\beta}{2\pi}\sum_{l=0}^L\left(1+e^{-\frac{\beta |x|^2}{2}}P_l\left(\frac{\beta |x|^2}{2}\right)\right)(z-\beta(2l+1))\\+\frac{\beta}{4\pi}\sum_{l=0}^L\int_{\Omega}\Delta\left(e^{-\frac{\beta|x|^2}{2}}P_l\left(\frac{\beta |x|^2}{2}\right)\right).
\end{multline}
Inequality \eqref{R1_0} holds for any fixed $L$, and it is clearly valid if we replace $|x|$ by $|x+x_0|$, $x_0\in\mathbb R^2$ (this amounts to choosing $v_{n,l}(r,t)$ where $(r,t)$ are polar coordinates centered at $x_0$). Then, for any $L\in\mathbb N$, taking $|x_0|\to+\infty$, we deduce that the last term of \eqref{R1_0} goes to $0$, and hence
\begin{equation}\label{R1_1}
\sum_{j=1}^{\infty}(z-\lambda_j)_+
\geq\frac{\beta|\Omega|}{2\pi}\sum_{l=0}^L(z-\beta(2l+1))=\frac{\beta|\Omega|}{2\pi}\sum_{l=1}^{L+1}(z-\beta(2l-1)).
\end{equation}
This implies immediately \eqref{R1_2}.
\end{proof}

We are now ready to prove Theorem \ref{main_averages}.

\begin{proof}[Proof of Theorem \ref{main_averages}]

Inequality \eqref{R1_2} is the cornerstone of this proof. The statements of Theorem \ref{main_averages} are consequences of this inequality.
 We prove first \eqref{main_R1}. We consider the right-hand side of \eqref{R1_2} and re-write it as
$$
\frac{\beta^2|\Omega|}{2\pi}\sum_{l=1}^{\infty}(\frac{z}{\beta}-(2l-1))_+=\frac{\beta^2|\Omega|}{2\pi}\sum_{l=1}^{\infty}((2w-1)-(2l-1))_+
$$
where $w=\frac{z}{2\beta}+\frac{1}{2}$. Then, some algebraic manipulations yield
\begin{multline*}
\frac{\beta^2|\Omega|}{2\pi}\sum_{l=1}^{\infty}((2w-1)-(2l-1))_+=\frac{\beta^2|\Omega|}{2\pi}\sum_{l=1}^{[w]}((2w-1)-(2l-1))=\frac{\beta^2|\Omega|}{2\pi}[w](2w-[w]-1)\\
=\frac{\beta^2|\Omega|}{2\pi}\left(w-\frac{1}{2}-\psi(w)\right)\left(w-\frac{1}{2}+\psi(w)\right)=\frac{\beta^2|\Omega|}{2\pi}\left(w-\frac{1}{2}\right)^2-\frac{\beta^2|\Omega|}{2\pi}\psi^2(w)\\
=\frac{\beta^2|\Omega|}{2\pi}\cdot\frac{z^2}{4\beta^2}-\frac{\beta^2|\Omega|}{2\pi}\psi^2\left(\frac{z}{2\beta}+\frac{1}{2}\right).
\end{multline*}
This proves \eqref{main_R1}.

\medskip
Before proving \eqref{up_2}, we note that, choosing $z=\lambda_1$, inequality \eqref{R1_2} reads
$$
0\leq \frac{\beta|\Omega|}{2\pi}\sum_{l=1}^{\infty}(\lambda_1-\beta(2l-1))_+\leq 0
$$
which implies $\lambda_1\leq\beta(2l-1)$ for all $l\geq 1$, and in particular, $\lambda_1\leq\beta$. This is an alternative way of recovering \eqref{TA} (however the inequality is not strict). 

\medskip

Now we prove \eqref{up_2}. Consider now the two functions
$$
f(z)=\sum_{j=1}^{\infty}(z-\lambda_j)_+
$$
and
$$
g(z)=\frac{\beta|\Omega|}{2\pi}\sum_{l=1}^{\infty}(z-\beta(2l-1))_+
$$
These two functions are convex. Let us define, for any $w\geq 0$, the functions
$$
\mathcal L[f](w):=\sup_{z\geq 0}(zw-f(z))\,,\ \ \ \mathcal L[g](w):=\sup_{z\geq 0}(zw-g(z)).
$$

These two functions are the Legendre transforms of $f$ and $g$. Since $f,g$ are convex, we have, for all $w\geq 0$, that $f(z)\geq g(z)\iff$ $\mathcal L[f](w)\leq\mathcal L[g](w)$.

A standard computation shows that
$$
\mathcal L[f](w)=\lambda_{[w]+1}(w-[w])+\sum_{j=1}^{[w]}\lambda_j
$$
and
\begin{multline*}
\mathcal L[g](w)=\frac{\beta^2|\Omega|}{2\pi}\left(2\left[\frac{2\pi w}{\beta|\Omega|}\right]+1\right)\left(\frac{2\pi w}{\beta|\Omega|}-\left[\frac{2\pi w}{\beta|\Omega|}\right]\right)+\sum_{j=1}^{\left[\frac{2\pi w}{\beta|\Omega|}\right]}\frac{\beta^2|\Omega|(2j-1)}{2\pi}\\
=\frac{\beta^2|\Omega|}{2\pi}\left(2\left[\frac{2\pi w}{\beta|\Omega|}\right]+1\right)\left(\frac{2\pi w}{\beta|\Omega|}-\left[\frac{2\pi w}{\beta|\Omega|}\right]\right)+\frac{\beta^2|\Omega|}{2\pi}\left[\frac{2\pi w}{\beta|\Omega|}\right]^2
\end{multline*}
We choose now $w=k$, so that the inequality $\mathcal L[f](k)\leq L[g](k)$ reads
\begin{equation}\label{sums0}
\sum_{j=1}^{k}\lambda_j\leq \frac{\beta^2|\Omega|}{2\pi}\left(2\left[\frac{2\pi k}{\beta|\Omega|}\right]+1\right)\left(\frac{2\pi k}{\beta|\Omega|}-\left[\frac{2\pi k}{\beta|\Omega|}\right]\right)+\frac{\beta^2|\Omega|}{2\pi}\left[\frac{2\pi k}{\beta|\Omega|}\right]^2
\end{equation}
Setting $X=\frac{2\pi k}{\beta|\Omega|}$ in \eqref{sums0}, we see that
\begin{multline*}
\sum_{j=1}^k\lambda_j\leq\frac{\beta^2|\Omega|}{2\pi}(2[X]+1)(X-[X])+\frac{\beta^2|\Omega|}{2\pi}[X]^2\\
=\frac{\beta^2|\Omega|}{2\pi}(2[X]+1)(X-[X])+\frac{\beta^2|\Omega|}{2\pi}(X^2+[X]^2-X^2)\\
=\frac{\beta^2|\Omega|}{2\pi}X^2+\frac{\beta^2|\Omega|}{2\pi}(2[X]+1)(X-[X])+\frac{\beta^2|\Omega|}{2\pi}([X]^2-X^2)\\
=\frac{2\pi k^2}{|\Omega|}+\frac{\beta^2|\Omega|}{2\pi}(2[X]+1)(X-[X])+\frac{\beta^2|\Omega|}{2\pi}([X]^2-X^2),
\end{multline*}
where we have used the fact that $\frac{\beta^2|\Omega|}{2\pi}X^2=\frac{2\pi k^2}{|\Omega|}$. Dividing both sides by $k$ we find
\begin{equation}\label{sums2}
\frac{1}{k}\sum_{j=1}^k\lambda_j\leq \frac{2\pi k}{|\Omega|}+\frac{\beta}{X}(X-[X])([X]-X+1)
\end{equation}
where we recall that $X=\frac{2\pi k}{\beta|\Omega|}$. This proves \eqref{up_2}.

\medskip

We give an estimate for the reminder function
$$
R(X)=\frac{\beta}{X}(X-[X])([X]-X+1).
$$
First of all, note that the function
$$
G(X)=(X-[X])([X]-X+1)
$$
is such that $G(n)=0$ for all $n\in\mathbb N$, $G(x)\geq 0$, and on each interval $(n,n+1)$ has a unique maximum which is $\frac{1}{4}$ and is reached when $X=n+\frac{1}{2}$. Hence $0\leq G(X)\leq\frac{1}{4}$. Therefore
$$
0\leq R(X)\leq\frac{\beta}{4X}.
$$
which is \eqref{up_2_rem_est}. %Setting $X=\frac{2\pi k}{\beta|\Omega|}$ we have the following bound for any $|\Omega|\leq\frac{2\pi k}{\beta}$:
%$$
%\frac{1}{k}\sum_{j=1}^k\lambda_j\leq\frac{2\pi k}{|\Omega|}+\frac{\beta^2|\Omega|}{8\pi k}.
%$$
If $0\leq X\leq 1$ one immediately checks that $R(X)=\beta(1-X)$. If $k\leq\frac{\beta|\Omega|}{2\pi}$ then $0\leq X\leq 1$ and from \eqref{sums2} we immediately get 
$$
\frac{1}{k}\sum_{j=1}^k\lambda_j\leq  \beta.
$$
which is \eqref{up_1}. The proof of Theorem \ref{main_averages} is concluded.

\end{proof}

We prove the following lemma on sums of eigenfunctions of $\Delta_A$ on $\mathbb R^2$.

\begin{lem}\label{sum_laguerre}
We have
$$
\sum_{n=0}^{\infty}|v_{n,l}|^2=\frac{\beta}{2\pi}\left(1+e^{-\frac{\beta |x|^2}{2}}P_l\left(\frac{\beta |x|^2}{2}\right)\right)
$$
where $P_l(y)$ is a polynomial of degree $2l-1$ in the variable $y$. If $l=0$, then $P_0(y)=0$.
\end{lem}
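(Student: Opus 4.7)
The plan is to compute $\sum_{n=0}^\infty |v_{n,l}|^2$ explicitly and read off $P_l$. Setting $y = \beta|x|^2/2$ and substituting the definitions of $u_{n,l}$ and $c_{n,l}$ from Lemma \ref{laguerrelem}, a direct calculation gives
$$\sum_{n=0}^\infty |v_{n,l}(x)|^2 \;=\; \frac{\beta\, l!}{2\pi}\, e^{-y} \sum_{n=0}^\infty \frac{y^n [L_l^n(y)]^2}{(l+n)!},$$
so the claim reduces to showing that $l!\sum_{n=0}^\infty \frac{y^n [L_l^n(y)]^2}{(l+n)!} - e^y$ is a polynomial of degree $2l-1$ (and identically zero when $l=0$). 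The case $l=0$ is immediate since $L_0^n \equiv 1$ forces the series to telescope to $e^y$.

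For general $l$, my approach is to interpret the sum as the value on the diagonal of the integral kernel of an orthogonal projection in $L^2(\mathbb R^2)$. By Lemma \ref{laguerrelem}, each $v_{n,l}$ is an $L^2(\mathbb R^2)$-eigenfunction of $\Delta_A$ (on the whole plane) with eigenvalue $\beta(2l+1)$, so $\{v_{n,l}\}_{n\ge 0}$ is an orthonormal family inside the full $l$-th Landau level eigenspace $\mathcal H_l$. The orthogonal projection $\Pi_l$ onto $\mathcal H_l$ has constant diagonal equal to $\beta/(2\pi)$: this is a standard consequence of the magnetic translation invariance of the Landau Hamiltonian, and can alternatively be verified directly in the Landau gauge, where the Landau level eigenfunctions factor as plane waves times Hermite functions and the sum over the plane-wave label gives a constant density.

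It remains to identify the orthogonal complement of $\overline{\operatorname{span}\{v_{n,l}\}_{n\ge 0}}$ inside $\mathcal H_l$. Solving the radial ODE in each negative angular-momentum sector, i.e.\ plugging $e^{-ikt} f_k(r)$ into \eqref{laplacian_polar} and trying the ansatz $f_k(r) = e^{-\beta r^2/4} r^k L_{l-k}^k(\beta r^2/2)$, one finds exactly $l$ additional $L^2$-eigenfunctions lying in $\mathcal H_l$, namely
$$w_{k,l}(r,t) \;=\; e^{-\beta r^2/4}\, r^k\, L_{l-k}^k\!\left(\tfrac{\beta r^2}{2}\right) e^{-ikt}, \qquad k = 1,\dots,l,$$
which are orthogonal to every $v_{n,l}$ thanks to their angular factor. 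Normalizing them via the standard Laguerre identity $\int_0^\infty e^{-y} y^k [L_{l-k}^k(y)]^2\, dy = l!/(l-k)!$ and subtracting $\sum_{k=1}^l |w_{k,l}/\|w_{k,l}\||^2$ from the constant $\beta/(2\pi)$ yields
$$\sum_{n=0}^\infty |v_{n,l}|^2 \;=\; \frac{\beta}{2\pi}\bigl(1 + e^{-y}P_l(y)\bigr), \qquad P_l(y) = -\sum_{k=1}^l \frac{(l-k)!\, y^k\, [L_{l-k}^k(y)]^2}{l!}.$$
Since $\deg L_{l-k}^k = l-k$, the $k$-th summand has degree $k + 2(l-k) = 2l-k$, which is maximized at $k=1$ and produces $\deg P_l = 2l-1$; for $l=0$ the sum is empty and $P_0 \equiv 0$. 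The main obstacle is the spectral step of showing that the $w_{k,l}$ (together with the $v_{n,l}$) exhaust $\mathcal H_l$, i.e.\ that no further $L^2$-eigenfunction of $\Delta_A$ exists at energy $\beta(2l+1)$ beyond these; a purely computational alternative using Laguerre generating-function identities is possible but is markedly more cumbersome.
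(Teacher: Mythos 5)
Your route is genuinely different from the paper's and, as far as the computations go, it checks out: the reduction to $\frac{\beta l!}{2\pi}e^{-y}\sum_n \frac{y^n[L_l^n(y)]^2}{(l+n)!}$, the identification of the negative angular-momentum eigenfunctions $w_{k,l}$ at energy $\beta(2l+1)$, their norms, and the resulting formula $P_l(y)=-\sum_{k=1}^l\frac{(l-k)!}{l!}\,y^k[L_{l-k}^k(y)]^2$ with $\deg P_l=2l-1$ are all correct (and in fact give a more explicit $P_l$ than the paper does). The paper instead proves the lemma by a self-contained, purely algebraic manipulation: it expands one factor $L_l^n(y)$ via the series in \eqref{laguerre}, interchanges the order of summation, and applies the Rodrigues formula to resum $\sum_n\frac{y^{n+i}}{(n+i)!}$ into $e^y$ minus a finite tail, isolating $e^y$ plus an $e^{-y}\times$(polynomial) remainder. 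What your approach buys is conceptual clarity (the sum is the diagonal of the $l$-th Landau projection minus the contributions of the finitely many negative-momentum states) and an explicit, manifestly sign-definite $P_l$; what the paper's approach buys is complete self-containedness.

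The caveat is that your argument rests on two spectral inputs that you cite or assert rather than prove: (i) that the diagonal of the Landau-level projection kernel is identically $\beta/(2\pi)$, and (ii) that the functions $v_{n,l}$ ($n\ge0$) together with the $w_{k,l}$ ($1\le k\le l$) exhaust an orthogonal basis of $\mathcal H_l$ --- you yourself flag (ii) as ``the main obstacle.'' Both are classical facts about the Landau Hamiltonian (for (ii) one decomposes $L^2(\mathbb R^2)$ into angular-momentum sectors and uses that the radial operator in the sector $m$ has simple discrete spectrum $\{\beta(2j+|m|-m+1)\}_{j\ge0}$, so exactly one state per sector $m\ge -l$ lands at energy $\beta(2l+1)$ and none for $m<-l$), and the reproducing-kernel identity $\Pi_l(x,x)=\sum_j|e_j(x)|^2$ makes the subtraction step rigorous. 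So the proof is sound modulo a reference for these standard facts, but to be self-contained it would need them written out, at which point the paper's two-line Rodrigues computation is the shorter path.
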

\begin{proof}
We note that
$$
|v_{n,l}|^2=\frac{\beta}{2\pi}e^{-y}\frac{l!}{(l+n)!}y^nL_l^n(y)^2
$$
with $y=\frac{\beta |x|^2}{2}$, so that
\begin{equation}\label{lag0}
\sum_{n=0}^{\infty}|v_{n,l}|^2=\frac{\beta}{2\pi}e^{-y}\sum_{n=0}^{\infty}\frac{l!}{(l+n)!}y^nL_l^n(y)^2
\end{equation}
Therefore, we need to study
\begin{equation}\label{prod0}
\sum_{n=0}^{\infty}\frac{l!}{(l+n)!}y^nL_l^n(y)^2.
\end{equation}
We use \eqref{laguerre} to expand one factor $L_l^n(y)$ in \eqref{prod0}
$$
\sum_{n=0}^{\infty}\frac{l!}{(l+n)!}y^nL_l^n(y)^2=\sum_{n=0}^{\infty}\frac{l!}{(l+n)!}y^nL_l^n(y)\sum_{i=0}^l(-1)^i\binom{l+n}{l-i}\frac{y^i}{i!}
$$
and change the order of summation:
$$
\sum_{n=0}^{\infty}\frac{l!}{(l+n)!}y^nL_l^n(y)^2=\sum_{i=0}^l \frac{(-1)^iy^i}{(l-i)!i!}\sum_{n=0}^{\infty}\frac{y^nL_l^n(y)(l+n)!l!}{(n+i)!(l+n)!}
$$
Using the second identity (Rodrigues formula) in \eqref{laguerre} we get
\begin{multline}\label{lag1}
\sum_{n=0}^{\infty}\frac{l!}{(l+n)!}y^nL_l^n(y)^2=\sum_{i=0}^l \frac{(-1)^iy^i}{(l-i)!i!}\sum_{n=0}^{\infty}\frac{e^y}{(n+i)!}\frac{d^l}{dy^l}(y^{l+n}e^{-y})\\
=e^y\sum_{i=0}^l\frac{(-1)^iy^i}{(l-i)!i!}\frac{d^l}{dy^l}\left(e^{-y}y^{l-i}\sum_{n=0}^{\infty}\frac{y^{n+i}}{(n+i)!}\right)\\=e^y\sum_{i=0}^l\frac{(-1)^iy^i}{(l-i)!i!}\frac{d^l}{dy^l}\left(e^{-y}y^{l-i}\left(e^y-\sum_{j=0}^{i-1}\frac{y^j}{j!}\right)\right)\\
=e^{y}\sum_{i=0}^l\binom{l}{i}\frac{(-1)^iy^i}{l!}\frac{d^l}{dy^l}\left(y^{l-i}-y^{l-i}e^{-y}\sum_{j=0}^{i-1}\frac{y^j}{j!}\right)\\
=e^y+e^{y}\sum_{i=1}^l\binom{l}{i}\frac{(-1)^iy^i}{l!}\frac{d^l}{dy^l}\left(y^{l-i}-y^{l-i}e^{-y}\sum_{j=0}^{i-1}\frac{y^j}{j!}\right).
\end{multline}
The proof follows now inserting \eqref{lag1} in \eqref{lag0}, and observing that the second summand in the last line of \eqref{lag1} is just a polynomial of degree $2l-1$.
\end{proof}

\appendix

\section{Eigenvalues of embedded curves and  thin tubular neighborhoods}\label{sec:thin}

In this section we consider the magnetic Laplacian on embedded curves. Throughout this section, by $\Gamma$ we denote a simple, closed curve, which is the boundary of some simply connected domain $\Omega$ in $\mathbb R^2$ (namely, $\Gamma=\partial\Omega$). 

 As potential one-form, we consider the restriction $\hat A$ of the standard magnetic potential $A=\frac{\beta}{2}(-x_2dx_1+x_1dx_2)$ to $\Gamma$ and study the resulting one-dimensional magnetic operator. 

That is, if $f:\Gamma\to\mathbb R^2$ is the embedding, we take the pull-back $\hat A=f^{\star}A$. It should be noted that $\hat A$ is closed for dimensional reasons (i.e., $d\hat A=0$), hence it generates a vanishing magnetic field on $\Gamma$. We denote by $\lambda_j(\Gamma,\hat A)$ the corresponding eigenvalues, which can be explicitly computed.

\begin{thm}\label{thm1d}
Let $\Gamma$ be an embedded curve, which is the boundary of a simply connected domain $\Omega\subset\mathbb R^2$, and consider the magnetic Laplacian associated with the potential $\hat A$ as above. Its spectrum is then
given by the collection
$$
\frac{4\pi^2}{|\Gamma|^2}\left(n-\frac{\beta|\Omega|}{2\pi}\right)^2, \quad n\in\mathbb Z.
$$
In particular
\begin{equation}\label{embeddedcurves}
\lambda_1(\Gamma,\hat A)=\frac{4\pi^2}{|\Gamma|^2}\min_{n\in\mathbb Z}\Big(n-\frac{\beta|\Omega|}{2\pi}\Big)^2
\end{equation}
hence $\lambda_1(\Gamma,\hat A)=0$ if and only if $|\Omega|=\frac{2n\pi}{\beta}$ for some $n\in\mathbb N$.
\end{thm}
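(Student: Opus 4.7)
The plan is to reduce the eigenvalue problem on $\Gamma$ to an ordinary periodic eigenvalue problem on an interval via a gauge transformation, the ``twist'' in the boundary condition being controlled by the total magnetic flux $\int_\Omega dA = \beta|\Omega|$. Since $\Gamma$ is one-dimensional, $d\hat A=0$ automatically, so locally $\hat A$ is exact and can be killed by a phase factor; the only obstruction is global.

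\textbf{Step 1 (setup).} Parametrize $\Gamma$ by arc-length $f:[0,L]\to\mathbb R^2$ with $L=|\Gamma|$, and write the pulled-back potential as $\hat A=a(s)\,ds$. In this parametrization the magnetic Laplacian acts on smooth complex functions by
\begin{equation*}
\Delta_{\hat A}u=-(\partial_s-ia)^2u,
\end{equation*}
and admissible functions are those which are single-valued on $\Gamma$, i.e.\ $u(L)=u(0)$ and $u'(L)=u'(0)$.

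\textbf{Step 2 (flux computation).} Since $\Omega$ is simply connected with $\partial\Omega=\Gamma$, Stokes' theorem and $dA=\beta dv$ give
\begin{equation*}
\int_0^L a(s)\,ds=\int_\Gamma \hat A=\int_\Gamma A=\int_\Omega dA=\beta|\Omega|.
\end{equation*}

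\textbf{Step 3 (gauge transform).} Define $\Phi(s):=\int_0^s a(\sigma)\,d\sigma$ and substitute $u(s)=e^{i\Phi(s)}w(s)$. A direct computation gives $(\partial_s-ia)u=e^{i\Phi}w'$ and hence $(\partial_s-ia)^2u=e^{i\Phi}w''$, so $\Delta_{\hat A}u=\lambda u$ becomes
\begin{equation*}
-w''(s)=\lambda w(s),\qquad s\in[0,L].
\end{equation*}
The catch, and the only real obstacle, is that the multiplier $e^{i\Phi(s)}$ is \emph{not} $L$-periodic: by Step 2, $e^{i\Phi(L)}=e^{i\beta|\Omega|}$. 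Therefore $u(L)=u(0)$ and $u'(L)=u'(0)$ translate into the quasi-periodic Floquet-type boundary condition
\begin{equation*}
w(L)=e^{-i\beta|\Omega|}w(0),\qquad w'(L)=e^{-i\beta|\Omega|}w'(0).
\end{equation*}

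\textbf{Step 4 (spectrum).} Looking for plane-wave solutions $w(s)=e^{i\mu s}$, the ODE gives $\lambda=\mu^2$ and the boundary condition gives $e^{i\mu L}=e^{-i\beta|\Omega|}$, i.e.
\begin{equation*}
\mu L=2\pi n-\beta|\Omega|,\qquad n\in\mathbb Z.
\end{equation*}
The family $\{e^{i\mu_n s}\}_{n\in\mathbb Z}$, pulled back through the gauge $e^{i\Phi}$, is complete and orthogonal in $L^2(\Gamma)$, so these exhaust the spectrum, yielding
\begin{equation*}
\lambda_n(\Gamma,\hat A)=\frac{4\pi^2}{|\Gamma|^2}\Bigl(n-\frac{\beta|\Omega|}{2\pi}\Bigr)^2,\qquad n\in\mathbb Z.
\end{equation*}
Minimizing over $n\in\mathbb Z$ yields \eqref{embeddedcurves}, and $\lambda_1(\Gamma,\hat A)=0$ exactly when $\beta|\Omega|/(2\pi)\in\mathbb N$. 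The delicate point is tracking the boundary condition through the gauge change in Step 3; everything else is a short calculation.
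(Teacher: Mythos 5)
Your proof is correct and follows essentially the same route as the paper: the paper computes the flux $\int_\Gamma \hat A=\beta|\Omega|$ by Stokes exactly as in your Step 2, and then simply cites \cite[Proposition 7]{CS1} for the fact that the spectrum of the magnetic Laplacian on a closed curve of length $L$ with flux $\Phi^A$ is $\frac{4\pi^2}{L^2}(n-\Phi^A)^2$. Your Steps 1, 3 and 4 are just the standard self-contained proof of that cited proposition (gauge away the potential at the cost of a quasi-periodic boundary condition twisted by $e^{-i\beta|\Omega|}$), so nothing is missing and nothing is genuinely different.
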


\begin{proof}
Note that, being a compact one-dimensional Riemannian manifold, $\Gamma$ is isometric to the circle with the same length; from \cite[Proposition 7]{CS1}, we know that the spectrum is given by:
$$
\lambda_n(\Gamma,\hat A)=\dfrac{4\pi^2}{\abs{\Gamma}^2}(n-\Phi^{A})^2, \quad n\in\mathbb Z
$$
where $\Phi^{A}=\dfrac{1}{2\pi}\int_{\Gamma}\hat A$ is the flux of $\hat A$ around $\Gamma$ oriented counter-clockwise (however the spectrum does not depend on the orientation). We compute the flux knowing that $\Gamma=\partial\Omega$ and get, by the Stokes formula:
$$
\Phi^{A}=\dfrac{1}{2\pi}\int_{\Omega}dA=\dfrac{\beta\abs{\Omega}}{2\pi}.
$$
The conclusion follows. 
\end{proof}

As a corollary, the classical isoperimetric inequality implies the following fact which, by abuse of language, can be interpreted as a ``reverse Faber-Krahn inequality'' for the first magnetic eigenvalue of the boundary of simply connected domains.

\begin{thm}\label{reverse_curves}
Let $\Omega\subset\mathbb R^2$ be a smooth simply connected domain with boundary $\Gamma$, and let $\Omega^*$ be a disk with $|\Omega|=|\Omega^*|$ and boundary $\Gamma^*$. Then
$$
\lambda_1(\Gamma,\hat A)\leq\lambda_1(\Gamma^*,\hat A).
$$
If $|\Omega|\ne\frac{2n\pi}{\beta}$ for all $n\in\mathbb N$, then equality holds if and only if $\Omega=\Omega^*$. In particular, we have
\begin{equation}\label{B4}
\lambda_1(\Gamma,\hat A)\leq\frac{\beta}{4}
\end{equation}
with equality if and only if $\Omega$ is a disk of radius $R=\frac{1}{\sqrt\beta}$.
\end{thm}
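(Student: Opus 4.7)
My plan is to derive both assertions of Theorem \ref{reverse_curves} directly from the explicit formula \eqref{embeddedcurves} combined with the classical isoperimetric inequality. Since \eqref{embeddedcurves} expresses $\lambda_1(\Gamma,\hat A)$ as $\dfrac{4\pi^2}{|\Gamma|^2}$ times a quantity that depends only on $|\Omega|$ (and $\beta$), the curve $\Gamma$ enters the formula through $|\Gamma|$ alone. Since $|\Omega|=|\Omega^*|$ under our hypotheses, the quantity $\min_{n\in\mathbb Z}(n-\beta|\Omega|/(2\pi))^2$ is the same for $\Omega$ and $\Omega^*$, and the inequality $\lambda_1(\Gamma,\hat A)\le\lambda_1(\Gamma^*,\hat A)$ reduces to $|\Gamma^*|^2\le|\Gamma|^2$, which is exactly the isoperimetric inequality for $\Omega$ (here $\Omega^*$ is a disk of the same area). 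This gives the main inequality in one line.

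For the equality case, the isoperimetric inequality $|\Gamma|^2\ge 4\pi|\Omega|$ is saturated exactly when $\Omega$ is a disk. If $|\Omega|\notin\frac{2\pi}{\beta}\mathbb N$, then the factor $\min_{n\in\mathbb Z}(n-\beta|\Omega|/(2\pi))^2$ is strictly positive, so equality $\lambda_1(\Gamma,\hat A)=\lambda_1(\Gamma^*,\hat A)$ forces $|\Gamma|=|\Gamma^*|$, hence $\Omega=\Omega^*$ up to translation. (If $|\Omega|\in\frac{2\pi}{\beta}\mathbb N$ both eigenvalues vanish regardless of the shape, so the equality case cannot be characterized.)

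For the universal bound \eqref{B4}, the work reduces to a one-variable calculus problem. Setting $x:=\beta|\Omega|/(2\pi)>0$, one has $|\Omega|=2\pi x/\beta$ and therefore
\begin{equation*}
\lambda_1(\Gamma^*,\hat A)=\frac{\pi}{|\Omega|}\min_{n\in\mathbb Z}(n-x)^2=\frac{\beta}{2x}\min_{n\in\mathbb Z}(n-x)^2.
\end{equation*}
I plan to show $\lambda_1(\Gamma^*,\hat A)\le\beta/4$, i.e.\ $\min_n(n-x)^2\le x/2$, by splitting into two cases. If $x\le 1/2$, then the closest integer to $x$ is $0$, so $\min_n(n-x)^2=x^2\le x/2$. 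If $x\ge 1/2$, then $\min_n(n-x)^2\le 1/4\le x/2$ using that the distance to the nearest integer is at most $1/2$. Combined with the first part, $\lambda_1(\Gamma,\hat A)\le\lambda_1(\Gamma^*,\hat A)\le\beta/4$.

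Finally, to characterize equality in \eqref{B4}, I would trace both inequalities. The inequality $\lambda_1(\Gamma,\hat A)\le\lambda_1(\Gamma^*,\hat A)$ is sharp iff $\Omega$ is a disk (the case $\lambda_1=0$ gives $\beta/4>0$, so it is excluded). For the bound $\lambda_1(\Gamma^*,\hat A)\le\beta/4$, the case analysis above shows equality forces $x=1/2$: when $x<1/2$, $x^2<x/2$ strictly; when $x>1/2$, $\min_n(n-x)^2\le 1/4<x/2$ strictly. Thus $x=1/2$, i.e.\ $|\Omega|=\pi/\beta$, which for a disk means radius $R=1/\sqrt\beta$. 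I expect the only slightly delicate point is keeping the equality discussion clean in the presence of the ``degenerate'' values $|\Omega|\in\frac{2\pi}{\beta}\mathbb N$; once one notes that at these values both sides of the first inequality vanish and $\beta/4$ is strict, the argument closes neatly.
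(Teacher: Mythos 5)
Your proposal is correct and follows essentially the same route as the paper: the explicit formula \eqref{embeddedcurves} plus the isoperimetric inequality give the comparison with the disk, and the bound $\beta/4$ comes from the same two-case analysis on whether $\beta|\Omega|/(2\pi)$ is at most or exceeds $1/2$ (the paper merely interleaves the isoperimetric inequality into each case rather than reducing to the disk first). Your treatment of the equality cases, including the degenerate values $|\Omega|\in\frac{2\pi}{\beta}\mathbb N$, is also consistent with the paper's argument.
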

 \begin{proof} The first assertion is an immediate consequence of \eqref{embeddedcurves} and the isoperimetric inequality. We prove the second assertion.
Assume first that $\frac{\beta\abs{\Omega}}{2\pi}\leq \frac 12$ so that we have $\min_{n\in\mathbb Z}\left(n-\frac{\beta|\Omega|}{2\pi}\right)^2=\left(\frac{\beta\abs{\Omega}}{2\pi}\right)^2\leq
\frac 12\frac{\beta\abs{\Omega}}{2\pi}$ hence
$$
\lambda_1(\Gamma,\hat A)\leq \dfrac{4\pi^2}{\abs{\Gamma}^2}\cdot\frac 12\frac{\beta\abs{\Omega}}{2\pi}\leq\dfrac{\beta}{4}
$$
by the isoperimetric inequality $\abs{\Gamma}^2\geq 4\pi\abs{\Omega}$. Note that the equality holds if and only if $\Omega$ is a disk of area $\frac{\pi}{\beta}$. Then, we assume $\frac{\beta\abs{\Omega}}{2\pi}> \frac 12$, and observe that $\min_{n\in\mathbb Z}\left(n-\frac{\beta|\Omega|}{2\pi}\right)^2\leq \frac 14$. It follows that
$$
\lambda_1(\Gamma,\hat A)\leq \dfrac{4\pi^2}{\abs{\Gamma}^2}\cdot \frac 14\leq\dfrac{\pi}{4\abs{\Omega}}<\frac{\beta}{4}.
$$
Finally, one checks easily that for a disk of radius $R=\frac{1}{\sqrt{\beta}}$ we have equality in \eqref{B4}.
The proof is complete.
\end{proof}

\begin{rem}
Note that, in the case of a circle $\Gamma_R$ of radius $R$, for a fixed $\beta$, we always have a sequence of radii $R_n$ such that $\lambda_1(\Gamma_{R_n},\hat A)=0$. This amounts to $R_n=\sqrt{\frac{2n}{\beta}}$, $n\in\mathbb N$.

Note also, that for these values of $R_n$, we have that $\beta$ is an eigenvalue of the magnetic Laplacian on $B_{R_n}$ (see Appendix \ref{sec:disk}).

Moreover, $\lambda_1(\Gamma_R,\hat A)\to 0$ as $R\rightarrow+\infty$, but the convergence is not monotonic. There is a subsequence, as we have said, where it is equal to zero. See Figure \ref{fig1d}. We note again the oscillating behavior of the first eigenvalue as a function of the radius. We observed an analogous behavior in the remainder of the lower bound for $R_1(z)$ in Theorem \ref{main_averages}. Also, an oscillating behavior is evident numerically for the magnetic eigenvalues of disks (see Figure \ref{F1}).

\end{rem}

\begin{figure}
\includegraphics[width=0.7\textwidth]{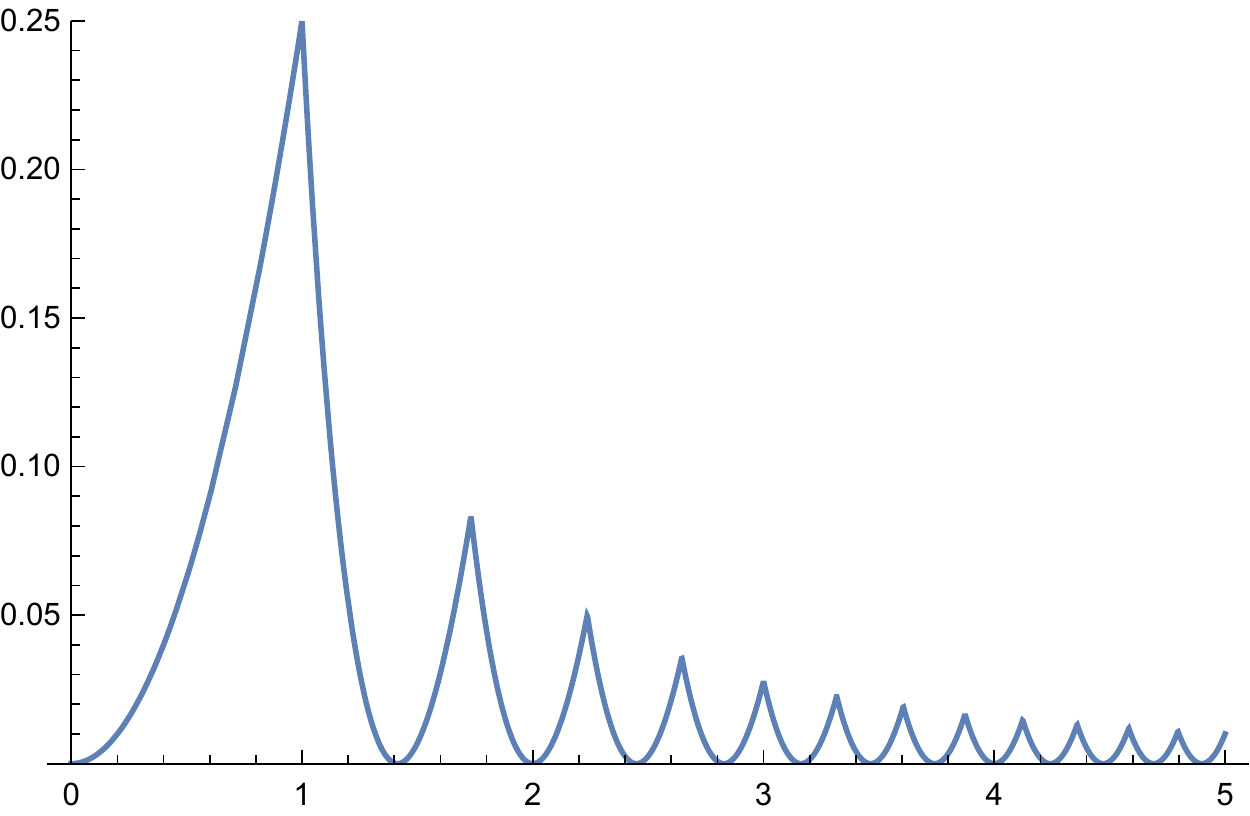}
\caption{First eigenvalue on a circle of radius $R$ as a function of $R$, for $\beta=1$.}
\label{fig1d}
\end{figure}

\section{Eigenvalues of the disk}\label{sec:disk}

In this section we consider the eigenvalues of the disk $B_R:=B(0,R)$. We shall denote them by $\lambda_j(B_R,\beta)$. Writing $\Delta_Au=\lambda u$ in $B_R$ in polar coordinates $(r,t)$ (see \eqref{laplacian_polar}), under the ansatz $u=v(r)e^{in t}$, $n\in\mathbb Z$, we see that $v$ solves
\begin{equation}\label{SLdisk}
\begin{cases}
-v''(r)-\frac{v'(r)}{r}+\left(\frac{\beta r}{2}-\frac{n}{r}\right)^2v(r)=\lambda v(r)\\
v'(R)=0.
\end{cases}
\end{equation}
A bounded solution to the differential equation in \eqref{SLdisk} is given by
\begin{equation}\label{ef_disk}
v_n(r)=e^{-\frac{\beta r^2}{4}}r^n L_{\frac{1}{2}\left(\frac{\lambda}{\beta}-1\right)}^n\left(\frac{r^2\beta}{2}\right),
\end{equation}
where $L_{\gamma}^a(x)$ denotes the generalized Laguerre function (see e.g., \cite[\S 22]{abram} for precise definitions and properties). The eigenvalues are  determined by imposing $v_n'(R)=0$.
For each $n\in\mathbb Z$ we have a sequence
$$
0<\lambda_1(n,\beta,R)<\lambda_2(n,\beta,R)\leq\cdots\nearrow+\infty.
$$
Clearly,
$$
\lambda_1(B_R,\beta)=\min_{n\in\mathbb Z}\lambda_1(n,\beta,R).
$$
This minimum depends on $\beta$ and $R$, but it is rather complicated to identify which $n\in\mathbb Z$ realizes it, for given $\beta, R$. It is easy to see that the minimum, in any case, is achieved for some $n\in\mathbb N$. The analytic branches (i.e., $\lambda_1(n,\beta,R)$) show a very intriguing behavior. A first numerical study can be found e.g., in \cite{SJ}. Due to the rescaling properties of $\lambda_1(B_R,\beta)$, it is not restrictive to fix $\beta=1$ and study the behavior of the first eigenvalue as function of $R$.

Note that the analogous study for the magnetic Laplacian with Dirichlet conditions is simpler (even if the computations are not trivial), and a complete picture is available (see \cite {son}). In particular, the first eigenfunction is always radial, i.e., $n=0$. 

The peculiar behavior of the eigenvalue $\lambda_1(B_R,\beta)$ as a function of $R$ is well illustrated in Figure \ref{F1}. Recall that the eigenvalues of $B_R$ are implicitly characterized by $v_n'(R)=0$, where $v_n$ is defined in \eqref{ef_disk}. In Figure \ref{F1} we have plotted the zero level sets of the function
$$
F(R,\lambda)=v_n'(R)=\frac{d}{dr}\left(e^{-\frac{\beta r^2}{4}}r^n L_{\frac{1}{2}\left(\frac{\lambda}{\beta}-1\right)}^n\left(\frac{r^2\beta}{2}\right)\right)_{|_{r=R}},
$$
for the choice $\beta=1$, in the region $(R,\lambda)\in[0,6]\times[0,1]$ for $n=0,...,10$. The first eigenvalue is the minimum of all the analytic branches of eigenvalues $\lambda_1(n,1,R)$ for $n=0,...,10$.

\begin{figure}
\includegraphics[width=0.8\textwidth]{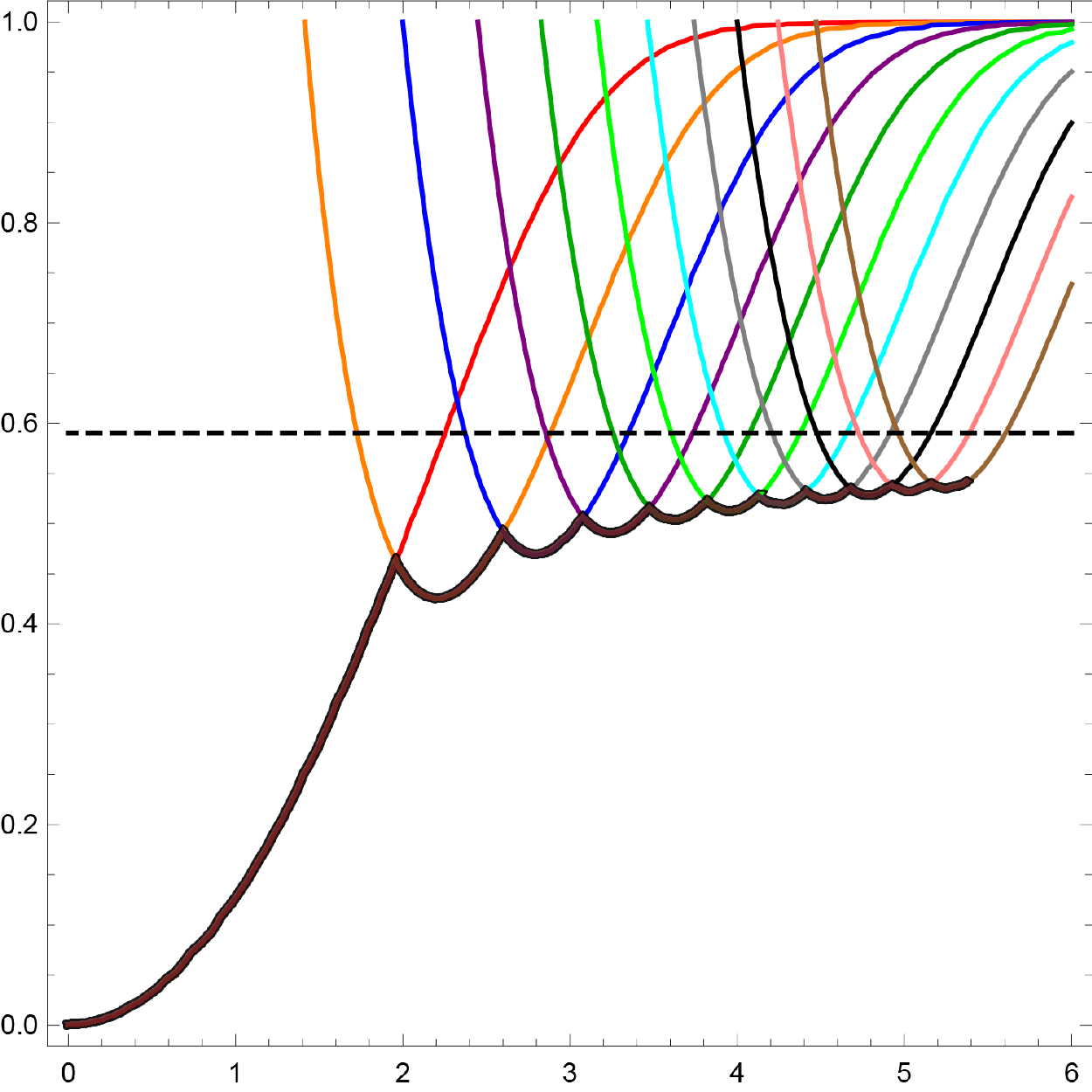}
\caption{Analytic branches $\lambda_1(n,1,R)$, $n=0,...,10$. In particular $n=0$ (red), $n=1$ (orange), $n=2$ (blue), $n=3$ (purple), $n=4$ (dark green), $n=5$ (light green), $n=6$ (cyan), $n=7$ (gray), $n=8$ (black), $n=9$ (pink), $n=10$ (brown). Here the variable on the abscissae axis is the radius $R$. The first eigenvalue is given by the minimum of all analytic branches, and is highlighted in dark brown.}
\label{F1}
\end{figure}

We note that the first eigenvalue has an oscillating behavior as a function of $R$. The black dashed line in Figure \ref{F1} corresponds to $\Theta_0\approx 0.590106$. This suggests that {\bf open problem 2} should have a positive answer. In Figure \ref{F1} we recognize a precise order. In fact, the first eigenvalue is given by $\lambda_1(0,1,R)$ for $R\in(0,R_1)$, then by $\lambda_1(1,1,R)$ for $R\in(R_1,R_2)$, and so on. This was highlighted in the numerical study of \cite{SJ}.

Even though we cannot give precise information on the eigenvalues of disks, explicit computations allow to have more insight on  Theorem \ref{main_averages}. From Theorem \ref{main_averages}  we have that if $|\Omega|\geq\frac{2\pi n}{\beta}$, then
\begin{equation}\label{BBB}
\frac{1}{n}\sum_{j=1}^n\lambda_j(\Omega,\beta)\leq\beta.
\end{equation}
For the disk $B_R$ the condition reads $R\geq\sqrt{\frac{2n}{\beta}}$. We show now that when  $R=\sqrt{\frac{2n}{\beta}}$, the first $n$ eigenvalues are strictly smaller than $\beta$, which clearly implies \eqref{BBB}. Moreover, in this case $\beta$ is an eigenvalue, and, in particular, it is at least the $n+1$-th eigenvalue. 

\begin{prop}
Let $R=\sqrt{\frac{2n}{\beta}}$ for some $n\in\mathbb N$. Then $\lambda_1(B_R,\beta)\leq\cdots\leq\lambda_n(B_R,\beta)<\beta$ and $\beta=\lambda_k(B_R,\beta)$ for some $k\geq n+1$.
\end{prop}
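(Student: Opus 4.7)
The strategy is to exhibit an explicit $n$-dimensional test space on which the Rayleigh quotient is strictly less than $\beta$, and to exhibit a genuine eigenfunction with eigenvalue $\beta$. Both will be drawn from the family
$$
u_m(r,t) = r^m\,e^{imt}\,e^{-\beta r^2/4}, \qquad m \in \mathbb{N},
$$
already used in the proof of Theorem \ref{up_B_diam}. Recall that $\Delta_A u_m = \beta u_m$ on all of $\mathbb{R}^2$, and that the magnetic normal derivative on $\partial B_R$ reduces to $\partial_r$, because the dual vector field $A^\sharp$ is tangent to circles centered at the origin.

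A direct computation gives
$$
\partial_r u_m\big|_{r=R} = R^{m-1}\Bigl(m - \tfrac{\beta R^2}{2}\Bigr)\,e^{imt}e^{-\beta R^2/4}.
$$
When $R = \sqrt{2n/\beta}$, one has $\beta R^2/2 = n$, so $\partial_r u_n|_{r=R} = 0$. Hence $u_n$ satisfies the magnetic Neumann boundary condition and is an eigenfunction on $B_R$ with eigenvalue $\beta$, establishing that $\beta \in \{\lambda_k(B_R,\beta)\}_{k\ge 1}$.

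Next, I would prove that $\lambda_n(B_R,\beta) < \beta$ by using $U := \operatorname{span}\{u_0,u_1,\dots,u_{n-1}\}$ as a test subspace in the min-max principle \eqref{minmax}. Applying the magnetic Green formula
$$
\int_{B_R}\langle \nabla^A u_m,\overline{\nabla^A u_k}\rangle \;=\; \int_{B_R}(\Delta_A u_m)\overline{u_k} + \int_{\partial B_R}(\partial_r u_m)\overline{u_k},
$$
and using $\Delta_A u_m = \beta u_m$, both the interior and the boundary integrals contain the angular factor $e^{i(m-k)t}$. Hence for $m\neq k$ they vanish, and $U$ is orthogonal in $L^2(B_R)$ with orthogonal magnetic gradients. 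For $m = k$, with $\beta R^2 = 2n$,
$$
\int_{\partial B_R} \bar u_m\, \partial_r u_m \;=\; 2\pi R^{2m}(m-n)\,e^{-n},
$$
which is strictly negative for $0 \le m \le n-1$. Therefore
$$
\int_{B_R}|\nabla^A u_m|^2 \;<\; \beta\int_{B_R}|u_m|^2,\qquad m=0,1,\dots,n-1,
$$
and by orthogonality the same strict inequality holds for every nonzero $u \in U$. The min-max principle then yields $\lambda_j(B_R,\beta) < \beta$ for $j=1,\dots,n$, which combined with the fact that $\beta$ is an eigenvalue forces $\beta = \lambda_k(B_R,\beta)$ for some $k \ge n+1$.

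There is no single serious obstacle here: the argument rests on the angular orthogonality of the $u_m$, the fact that they are global eigenfunctions of $\Delta_A$ with the same eigenvalue $\beta$, and the clean boundary-term computation at the critical radius $R=\sqrt{2n/\beta}$. The only point that requires care is the sign bookkeeping in the magnetic Green identity (our $\Delta$ is positive) and the observation that $A^\sharp$ is tangent to $\partial B_R$, which eliminates the imaginary contribution to the magnetic normal derivative.
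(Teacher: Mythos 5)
Your proof is correct and follows the same skeleton as the paper's: the same family $u_m=r^me^{imt}e^{-\beta r^2/4}$, the function $u_n$ as the eigenfunction realizing $\beta$ at the critical radius, and the span of $u_0,\dots,u_{n-1}$ as an $n$-dimensional test space in the min-max principle. The one genuine difference is how the strict inequality $\int_{B_R}|\nabla^Au_m|^2<\beta\int_{B_R}|u_m|^2$ for $m<n$ is obtained. The paper computes the Rayleigh quotient of $u_m$ explicitly in terms of incomplete Gamma functions and then verifies the identity $-m^2\Gamma(m,n)+(2m+1)\Gamma(m+1,n)-\Gamma(m+2,n)=n^me^{-n}(m-n)$; you instead apply the magnetic Green identity and observe that, since $\Delta_Au_m=\beta u_m$ globally and $A^{\sharp}$ is tangent to $\partial B_R$, everything reduces to the sign of the boundary term $\int_{\partial B_R}\bar u_m\,\partial_ru_m=2\pi R^{2m}(m-n)e^{-n}$. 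Your route is cleaner (the paper's Gamma-function identity is exactly this boundary term in disguise) and it has the added benefit of making explicit that the sesquilinear form $\int_{B_R}\langle\nabla^Au_m,\overline{\nabla^Au_k}\rangle$ vanishes for $m\ne k$, so that the Rayleigh quotient of an arbitrary nonzero element of the span is a weighted average of the individual quotients; the paper invokes only the $L^2$-orthogonality of the $u_m$ and leaves this point implicit, although it is needed for the min-max argument to close.
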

\begin{proof}
It is easy to show that the function (in polar coordinates $(r,t)$)
$$
f_n(r,t)=e^{-\frac{\beta r^2}{4}}r^ne^{int}
$$
is an eigenfunction corresponding to the eigenvalue $\beta$ (see \eqref{laplacian_polar}).  Consider now the functions $f_m(r,t)=e^{-\frac{\beta r^2}{4}}r^me^{imt}$ for $m\in\mathbb N$. %It is not difficult to show that they form an orthogonal basis of the space $$
%\{v\in H^1(B(0,R)):\int_{B(0,R)}\langle\nabla^Av,\overline{\nabla^A\phi}\rangle-\beta v\overline\phi=0\,,\ \forall\phi\in H^1_0(B(0,R))\}.$$
We prove that
$$
\frac{\int_{B(0,R)}|\nabla^Af_m|^2}{\int_{B(0,R)}|f_m|^2}<\beta
$$
if and only if $m<n$. Since $\{f_m\}_{m=0}^{n-1}$ is an orthogonal family in $L^2(B_R)$, this will imply from the min-max principle \eqref{minmax} that there are at least $n$ eigenvalues strictly below $\beta$, and, as a consequence, that $\beta$ is at least the $n+1$-th eigenvalue. %Actually, from Figure \ref{fig2} here below it seems that $\beta$ is exactly the $n+1$-th eigenvalue. 
A standard computation shows that
$$
\frac{\int_{B(0,R)}|\nabla^Af_m|^2}{\int_{B(0,R)}|f_m|^2}=\beta\frac{m!-m^2\Gamma(m,n)+2m\Gamma(m+1,n)-\Gamma(m+2,n)}{m!-\Gamma(m+1,n)}
$$
where $\Gamma$ denotes here the incomplete Gamma function. Now, for $m=n$ this gives exactly $\beta$. Now, let
$$
G(m,n):=\frac{m!-m^2\Gamma(m,n)+2m\Gamma(m+1,n)-\Gamma(m+2,n)}{m!-\Gamma(m+1,n)}.
$$
We have $G(n,n)=1$. Moreover,
$$
\lim_{n\to+\infty}G(m,n)=1.
$$
Now $G(m,n)<1$ if and only if
$$
-m^2\Gamma(m,n)+(2m+1)\Gamma(m+1,n)-\Gamma(m+2,n)<0
$$
and using the properties of the Gamma function we see that
$$
-m^2\Gamma(m,n)+(2m+1)\Gamma(m+1,n)-\Gamma(m+2,n)=n^me^{-n}(m-n)
$$
which means that $G(m,n)<1$  for all $m<n$. The proof is concluded.
\end{proof}
%In  Figure \ref{fig2} below we have the eigenvalues $\lambda_1(n,\beta,R)$ for $n=0,...,10$ (and $R\in[0,6]$). The vertical dashed lines correspond to $R=\sqrt{\frac{2n}{\beta}}$, $n=0,...,10$ (recall $\beta=1$). For example, the vertical orange dashed line corresponds to $R=\sqrt{2}$. We have that for this value, $\lambda_1$ and $\lambda_2$ are less than $\beta$ (actually, $\lambda_2=\beta$). And so on.
%In the same way one could prove that for $R=\sqrt{\frac{2n}{\beta}}$, then $\lambda_{n+2}(\beta,B(0,R))=3\beta$.

%\begin{figure}[H]
%\includegraphics[width=0.7\textwidth]{averages.pdf}
%\caption{}
%\label{fig2}
%\end{figure}

From Figure \ref{F1} it seems evident that when $R=\sqrt{\frac{2n}{\beta}}$, $\beta$ is exactly the $n+1$-th eigenvalue (recall that $\beta=1$ in Figure \ref{F1}): the analytic branch corresponding to $n\in\mathbb N$ intersects the horizontal line $\lambda=\beta=1$ at $R=\sqrt{2n}$. We are left with the following

\noindent{\bf Open problem 3} Prove that for $R\geq\sqrt{\frac{2n}{\beta}}$ the first $n+1$ eigenvalues of $B_R$ lie below $\beta$;
prove that for any domain with $|\Omega|\geq\frac{2\pi n}{\beta}$  the first $n+1$ eigenvalues lie below $\beta$; improve (if true) inequality \eqref{up_1}  of Theorem \ref{main_averages} as follows: for all $|\Omega|\geq\frac{2\pi n}{\beta}$
$$
\frac{1}{n+1}\sum_{j=1}^{n+1}\lambda_j\leq\beta.
$$

\section{Further examples}\label{sub:examples}

In this Appendix we provide further examples which highlight the difficulties of finding lower bounds for $\lambda_1(\Omega,\beta)$.

We first show that domains with small {\it width} have small first eigenvalue. Recall that the width $\epsilon$ of a domain $\Omega$ is defined as the infimum of the numbers $h>0$ such that, up to isometries, $\Omega$ is contained in a strip $]-\infty,\infty[\times]-h/2,h/2[$.

\begin{thm}\label{small_width}
Let $\Omega$ be a bounded  domain of width $\epsilon$. Then
$$
\lambda_1(\Omega,\beta) \le \frac{\epsilon^2\beta^2}{4}.
$$
\end{thm}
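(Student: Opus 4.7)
The plan is to combine the gauge invariance of the magnetic spectrum with a well-chosen constant test function in a Landau-type gauge adapted to the strip. The symmetric potential $A=\frac{\beta}{2}(-x_2dx_1+x_1dx_2)$ has magnitude $\frac{\beta}{2}|x|$, which cannot be controlled by the width $\epsilon$ alone; switching to a gauge whose norm depends only on the transverse coordinate makes the bound trivial.

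First, by the isometry invariance of $\lambda_1(\Omega,\beta)$ recalled in Subsection~\ref{properties}, I may assume $\Omega\subset\mathbb R\times(-\epsilon/2,\epsilon/2)$. Next, I replace $A$ by the Landau-type potential $A'=-\beta x_2\,dx_1$. A direct calculation gives $A-A'=d\bigl(\tfrac{\beta}{2}x_1x_2\bigr)$, which is globally exact on $\Omega$ (so the gauge transformation is valid regardless of whether $\Omega$ is simply connected), and $dA'=\beta\,dv$, confirming that $A'$ generates the same constant magnetic field. By gauge invariance, $\lambda_1(\Omega,A)=\lambda_1(\Omega,A')$.

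I then use $u\equiv 1$ as test function in the min-max principle \eqref{minmax}. Since $\nabla u=0$, the magnetic gradient reduces to $\nabla^{A'}u=-iu(A')^{\sharp}$, so
$$
|\nabla^{A'}u|^2=|A'|^2=\beta^2 x_2^2\leq\frac{\beta^2\epsilon^2}{4}
$$
pointwise on $\Omega$. Dividing by $\int_\Omega|u|^2=|\Omega|$ yields
$$
\lambda_1(\Omega,\beta)\leq\frac{\int_\Omega|\nabla^{A'}u|^2}{\int_\Omega|u|^2}\leq\frac{\beta^2\epsilon^2}{4},
$$
which is the desired inequality.

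There is no substantial obstacle, and the argument is entirely elementary once the right gauge is chosen. The only conceptual point is the observation that the natural symmetric gauge is inadequate because its magnitude grows with the longitudinal coordinate $x_1$, whereas the Landau gauge perpendicular to the strip confines all the cost into the bounded transverse direction.
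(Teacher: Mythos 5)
Your proof is correct and is essentially the same as the paper's: the paper simply uses the test function $u=e^{i\frac{\beta}{2}x_1x_2}$ directly in the symmetric gauge, which is exactly the gauge transform by $e^{i\frac{\beta}{2}x_1x_2}$ of your constant function in the Landau gauge $A'=-\beta x_2\,dx_1$, and yields the identical pointwise bound $|\nabla^A u|^2=\beta^2x_2^2\le\beta^2\epsilon^2/4$.
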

\begin{proof}
Suppose that $\Omega\subset]-\infty,\infty[\times]-\epsilon/2,\epsilon/2[$. We consider the test function $u(x_1,x_2)=e^{i\frac{\beta}{2}x_1x_2}$ in \eqref{minmax}. A standard computation shows that
$$
|\nabla^Au|^2=\beta^2x_2^2.
$$
Since $|u|=1$, we deduce
$$
\lambda_1(\Omega,\beta)\leq\frac{\int_{\Omega}|\nabla^Au|^2}{\int_{\Omega}|u|^2}=\frac{\beta^2\epsilon^2}{4}.
$$
\end{proof}
Of course, this only relevant if the width is small enough.

\medskip

Thanks to Theorem \ref{small_width} we show that there exist convex domains of any measure with small first eigenvalue.

\begin{ex}\label{conv_small}
Let $\Omega_{\epsilon}= ]-L,L[\times ]-\epsilon/2,\epsilon/2[$, $L,\epsilon>0$. For any $L>0$,
\begin{equation}\label{up_ex1}
\lambda_1(\Omega_{\epsilon},\beta)\le\frac{\beta^2\epsilon^2}{4}.
\end{equation}
In particular, $\lambda_1(\Omega_{\epsilon},\beta)\to 0$ as $\epsilon\to 0^+$.
\end{ex}

Note that the upper bound \eqref{up_ex1} does not depend on $L$, so that $\vert \Omega_{\epsilon}\vert$ could be as large (or small) as one wishes. The Rayleigh quotient goes to $0$ proportionally to $\beta^2\epsilon^2$. We recall the Payne-Weinberger inequality \cite{PW} for the first positive eigenvalue of the Neumann Laplacian $\lambda_2^N$:
\begin{equation}\label{PaWe}
\lambda_2^N\geq\frac{\pi^2}{d_{\Omega}^2},
\end{equation}
valid for convex domains. Here $d_{\Omega}$ is the diameter of $\Omega$. Example \ref{conv_small} shows that \eqref{PaWe} does not extend to the first magnetic eigenvalue.

\medskip

One may be tempted to conclude that thin domains, or domains with small area, have small first eigenvalue. To this regard, we remark that topology plays a role: if $\Omega$ is simply connected with small area, it is true that the first eigenvalue is small, see Remark \ref{rem_not_sc}. A bit surprisingly, this is not true if the domain is not simply connected. We show examples of thin domains, of arbitrarily small area and first eigenvalue uniformly bounded away from zero.

\begin{ex}\label{annuli0}
Let $\Gamma$ be a closed simple curve which is the boundary of a smooth bounded domain $\Omega$. Let $\omega_h:=\{x\in\Omega:{\rm dist}(x,\Gamma)<h\}$ be a small tubular neighborhood of $\Gamma$. Then
\begin{enumerate}[1)]
\item if $\frac{\beta|\Omega|}{2\pi}\not\in\mathbb N$,
$$
\lambda_1(\omega_h,\beta)\geq\frac{4\pi^2}{|\Gamma|^2}\min_{n\in\mathbb Z}\left(n-\frac{\beta|\Omega|}{2\pi}\right)^2-\sqrt{\beta}h,
$$
for all $h\in(0,\epsilon_0)$, for some $\epsilon_0>0$ depending on $\Omega$;
\item if  $\frac{\beta|\Omega|}{2\pi}\in\mathbb N$, then
$$
\lambda_1(\omega_h,\beta)\to 0
$$
as $h\to 0^+$
\end{enumerate}
In particular, $\lambda_1(\omega_h,\beta)\to 0$ as $h\to 0^+$ if and only if $\frac{\beta|\Omega|}{2\pi}\in\mathbb N$.
\end{ex}
\begin{proof}
Point 1) follows from Theorem \ref{lower_curve}, while point 2) follows from Proposition \ref{prop0}. We postpone the corresponding proofs at the end of this section.

Alternatively, it is proved in \cite[\S 9]{ru_sch} that
$$
\lambda_j(\omega_h,\beta)\to\lambda_j(\Gamma,\hat A)
$$
as $h\to 0^+$, where $\lambda_j(\Gamma,\hat A)$ denote the magnetic eigenvalues on $\Gamma$ endowed with the restriction of the standard potential $A$. In particular, $\lambda_1(\omega_h,\beta)\to\lambda_1(\Gamma,\hat A)$ as $h\to 0^+$, and  we have seen in Theorem \ref{thm1d} that $\lambda_1(\Gamma,\hat A)=0$ if and only if $\frac{\beta|\Omega|}{2\pi}\in\mathbb N$. 
\end{proof}

 \begin{rem} 1) If the curve $\Gamma=\partial\Omega$ is {\it generic}, in the sense that $\frac{\beta\abs{\Omega}}{2\pi}\notin\mathbb N$, then for small $h$ the domain $\omega_h$ has arbitrarily small area and first eigenvalue bounded away from zero. 

\smallskip

2) If $\Gamma$ is an arbitrary curve then we know from \ref{reverse_curves} that 
$
\lambda_1(\Gamma,\hat A)\leq \frac {\beta}4;
$
 as a consequence, when $h$ is sufficiently small, one has
$
\lambda_1(\omega_h,\beta)<\Theta_0\beta,
$
and we have another family of domains for which $\lambda_1(\Omega,\beta)<\Theta_0\beta$.
\end{rem}

\smallskip

In conclusion we have plenty of  domains with arbitrarily small volume and thickness, and first eigenvalue either arbitrarily close to zero, or bounded away from zero, and this depends on the area enclosed by $\Gamma$. Note that the domains of Example \ref{annuli0} are thin, but not simply connected. If we consider tubes around open curves the first eigenvalue always vanishes as the tube shrinks to the curve.
 
 \begin{ex}\label{open_curve}
 Let $\Gamma$ be an open simple curve and let $
\omega_h:=\{x\in\Omega:{\rm dist}(x,\Gamma)<h\}$.
Then
$$
\lambda_1(\omega_h,\beta)\to 0
$$
as $h\to 0^+$.
\end{ex}
\begin{proof}
Let $\lambda_j(\Gamma,\hat A)$ denote the magnetic eigenvalues on $\Gamma$ endowed with the restriction of $A$, and magnetic Neumann boundary conditions at the endpoints. Since the restriction of $A$ to $\Gamma$ is exact, we conclude that $\lambda_j(\Gamma,\hat A)$ are just the Neumann eigenvalues on $\Gamma$, and in particular, $\lambda_1(\Gamma,\hat A)=0$. Let $\omega_h$ be a tube of size $h$ around $\Gamma$. Then, by \cite[\S 9]{ru_sch} we have that $\lambda_1(\omega_h,\beta)\to \lambda_1(\Gamma,\hat A)=0$ as $h\to 0^+$. Note that the limit $\lambda_1(\omega_h,\beta)\to 0$ follows also by the fact that $\omega_h$ is simply connected and its area goes to zero, see Corollary \ref{variable_1}, point 4).
\end{proof}

We show now a final example of domains with large area, small thickness (i.e., small rolling radius at each point of the boundary), and first eigenvalue close to zero.

\begin{ex}\label{annuli}
Let $A_n:=\{x\in\mathbb R^2:\sqrt{Cn}\leq r\leq\sqrt{Cn+D n^{\alpha}}\}$, where $(r,t)$ are the polar coordinates in $\mathbb R^2$, $C,D>0$ and $0\leq\alpha< \frac{1}{2}$. We have that $|A_n|\to+\infty$ and $\lambda_1(A_n,\beta)\to 0$ as $n\to +\infty$.
\end{ex}
\begin{proof}
 We have $|A_n|=\pi D n^{\alpha}$, while the thickness of the annulus (the difference between the two radii) behaves like $n^{\alpha-1/2}$ as $n\to+\infty$. Taking $C=\frac{2}{\beta}$ and $u(r,t)=e^{int}$ as test functions in \eqref{minmax}, a standard computation shows that $\lambda_1(A_n,\beta)\to 0$ as $n\to+\infty$. With a bit of more work it is possible to deduce the same result for any $C>0$.
 \end{proof} %On the other hand, for the same reason, if $C\ne\frac{2\pi}{\beta}$ we expect that $\lambda_1(\beta,A_n)\geq c>0$ uniformly as $n\to+\infty$.

These examples show that finding good lower bounds for $\lambda_1(\Omega,\beta)$ is a difficult task. It seems that the condition on the rolling radius of Theorem \ref{thm_lower} is quite natural in many situations.

\medskip

We conclude this section with the proofs of Theorem \ref{lower_curve} and Proposition \ref{prop0} which we have used to discuss Example \ref{annuli0}.

\begin{thm}\label{lower_curve} Let $\Gamma$ be a closed simple curve which is the boundary of a smooth bounded domain $\Omega$ such that $\frac{\beta|\Omega|}{2\pi}\not\in\mathbb N$. Let $\omega_h:=\{x\in\Omega:{\rm dist}(x,\Gamma)<h\}$ be a small tubular neighborhood of $\Gamma$. There exists $\epsilon_0>0$ depending  on $\Omega$ such that, for all $h\in(0,\epsilon_0)$ one has:
$$
\lambda_1(\omega_h,\beta)\geq\lambda_1(\Gamma,\hat A)-\sqrt{\beta} h,
$$
\end{thm}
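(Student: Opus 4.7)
My plan is to slice $\omega_h$ into parallel curves $\Gamma_\rho=\{x\in\Omega : \mathrm{dist}(x,\Gamma)=\rho\}$ and reduce the two-dimensional lower bound to a controlled loss on the one-dimensional eigenvalue problem of Theorem~\ref{thm1d}. First I would introduce Fermi coordinates $(s,\rho)$ via the inward normal exponential map to $\Gamma$, with $s\in[0,|\Gamma|)$ the arclength on $\Gamma$ and $\rho\in(0,\epsilon_0)$ the distance to $\Gamma$. Here $\epsilon_0>0$ is chosen smaller than the injectivity radius of this exponential map, hence depends only on $\Omega$. The Euclidean metric takes the form $(1-\kappa(s)\rho)^2\,ds^2+d\rho^2$ with volume element $(1-\kappa\rho)\,ds\,d\rho$, and by gauge invariance I would replace $A$ with a primitive of $\beta\,dv$ whose $d\rho$-component vanishes on $\omega_{\epsilon_0}$; this is achieved by solving $\partial_\rho f=A_\rho$ with $f(s,0)=0$ and subtracting $df$. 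Thus $A=a(s,\rho)\,ds$.

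With this gauge, the magnetic Dirichlet energy decomposes as
\begin{equation*}
|\nabla^A u|^2=|\partial_\rho u|^2+\frac{|\partial_s u-iau|^2}{(1-\kappa\rho)^2}.
\end{equation*}
Dropping the non-negative normal term and introducing the intrinsic arclength $d\tilde s=(1-\kappa\rho)\,ds$ on $\Gamma_\rho$, the slice $s\mapsto u(s,\rho)$ is an admissible test function in the Rayleigh quotient of $\lambda_1(\Gamma_\rho,\hat A)$ (the eigenvalue from Theorem~\ref{thm1d} applied to $\Gamma_\rho$). This yields the slicewise estimate
\begin{equation*}
\int_0^{|\Gamma|}\frac{|\partial_s u-iau|^2}{1-\kappa\rho}\,ds\ \ge\ \lambda_1(\Gamma_\rho,\hat A)\int_0^{|\Gamma|}|u|^2(1-\kappa\rho)\,ds,
\end{equation*}
and integrating in $\rho\in(0,h)$ and applying the variational characterization to a first eigenfunction of $\omega_h$ gives
\begin{equation*}
\lambda_1(\omega_h,\beta)\ \ge\ \inf_{0\le\rho\le h}\lambda_1(\Gamma_\rho,\hat A).
\end{equation*}

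The proof then reduces to a continuity estimate on $\rho\mapsto\lambda_1(\Gamma_\rho,\hat A)$. By Theorem~\ref{thm1d} applied to the domain $\Omega_\rho$ enclosed by $\Gamma_\rho$,
\begin{equation*}
\lambda_1(\Gamma_\rho,\hat A)=\frac{4\pi^2}{|\Gamma_\rho|^2}\min_{n\in\mathbb{Z}}(n-\Phi_\rho)^2,\qquad \Phi_\rho=\frac{\beta|\Omega_\rho|}{2\pi}.
\end{equation*}
The tube formulas $|\Gamma_\rho|=|\Gamma|-2\pi\rho$ (from Gauss--Bonnet) and $|\Omega_\rho|=|\Omega|-|\Gamma|\rho+\pi\rho^2$ give $\Phi_\rho=\Phi_0+O(\beta|\Gamma|\rho)$. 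Since $\Phi_0\notin\mathbb{Z}$ by assumption, the integer $n_0$ realizing the minimum at $\rho=0$ continues to realize it for $\rho$ small enough (in terms of $\Omega$ and $\beta$), and a first-order Taylor expansion using $|n_0-\Phi_0|\le\tfrac12$ and $|\Gamma_\rho|\le|\Gamma|$ yields $\lambda_1(\Gamma_\rho,\hat A)\ge\lambda_1(\Gamma,\hat A)-C\rho$ for some explicit $C$.

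The main obstacle is calibrating the constant $C$ to the form $\sqrt\beta$ claimed in the statement. The natural linear coefficient produced by the Taylor expansion is $\tfrac{2\pi\beta|n_0-\Phi_0|}{|\Gamma|}$, which carries a geometric dependence on $\Omega$ and an arithmetic dependence on the distance from $\Phi_0$ to $\mathbb{Z}$. The bound $C\le\sqrt\beta$ is obtained by restricting $h<\epsilon_0$ with $\epsilon_0$ chosen small enough (in terms of $\Omega$) that both the higher-order terms in $\rho$ and the geometric prefactor are absorbed---this is precisely the $\Omega$-dependence built into $\epsilon_0$ in the statement. All the remaining ingredients (the Fermi-coordinate computation, the gauge adjustment, and the slicewise Rayleigh inequality) are routine once this framework is in place.
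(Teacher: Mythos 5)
Your architecture coincides with the paper's: slice $\omega_h$ by the equidistants $\Gamma_\rho$, discard the normal part of the magnetic gradient to reduce to the one--dimensional Rayleigh quotient on each slice, deduce $\lambda_1(\omega_h,\beta)\ge\inf_{0\le\rho\le h}\lambda_1(\Gamma_\rho,\hat A)$, and finish with a continuity estimate for $\rho\mapsto\lambda_1(\Gamma_\rho,\hat A)$. (The paper obtains the slicewise inequality more directly, without Fermi coordinates or a gauge adjustment: it uses the pointwise inequality $|\nabla^Au|^2\ge|\langle\nabla^Au,T\rangle|^2=|\nabla^{\hat A}u|^2$ and the coarea formula; your normal--gauge computation is correct but not needed, since the normal term is nonnegative in any gauge.)

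The genuine gap is in the step you yourself flag as the main obstacle. You propose to force the linear coefficient of the expansion of $\lambda_1(\Gamma_\rho,\hat A)$ below $\sqrt{\beta}$ by shrinking $\epsilon_0$. That cannot work: if $\lambda_1(\Gamma_\rho,\hat A)=\lambda_1(\Gamma,\hat A)-C\rho+O(\rho^2)$ with $C>\sqrt{\beta}$, the desired inequality fails for \emph{every} sufficiently small $\rho>0$. Shrinking $\epsilon_0$ controls the quadratic remainder and freezes the minimizing integer $n_0$, but it has no effect on a first--order coefficient, which a priori involves $\beta$, $|\Gamma|$ and the distance from the flux to $\mathbb Z$.

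The paper's Lemma \ref{lem_curve} handles this differently. Writing $A(\rho)=|\Omega_\rho|$, $L(\rho)=|\Gamma_\rho|$ and $f(\rho)=\sqrt{\lambda_1(\Gamma_\rho,\hat A)}=2\pi\,|n_0-\beta A(\rho)/2\pi|/L(\rho)$, it splits into the two cases of \eqref{casestwo}. When the flux sits just below $n_0$, the monotonicities $A(\rho)\le A(0)$ and $L(\rho)\le L(0)$ give $f(\rho)\ge f(0)$, i.e.\ no loss at all --- a favorable case your uniform Taylor estimate does not exploit. In the other case, $A(\rho)\ge A(0)-L(0)\rho$ gives $f(\rho)\ge f(0)-\beta\rho$; squaring, the linear coefficient becomes $2\beta f(0)$, and the decisive extra input is the reverse Faber--Krahn inequality of Theorem \ref{reverse_curves}, $f(0)=\sqrt{\lambda_1(\Gamma,\hat A)}\le\sqrt{\beta}/2$. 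It is this isoperimetric bound on $\lambda_1(\Gamma,\hat A)$ itself, not the smallness of $\epsilon_0$, that eliminates the dependence on $|\Gamma|$ and on $|n_0-\Phi_0|$ from the constant; this is the ingredient missing from your argument. (As a side remark, careful bookkeeping of this route gives $2\beta f(0)\le\beta^{3/2}$ rather than $\sqrt{\beta}$ --- indeed $\sqrt{\beta}\,h$ does not have the dimensions of an eigenvalue --- so the constant in the statement, and the paper's passage from $f(\rho)\ge f(0)-\beta\rho$ to $f(\rho)\ge f(0)-\rho$, look like slips; but the mechanism you need is the one just described.)
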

\begin{proof}
Let $\delta$ denote the injectivity radius of the normal exponential map, which is positive being $\Omega$ smooth. For $r\leq\delta$, let $\Gamma_r=\{x\in\Omega: {\rm dist}(x,\Gamma)=r$) be the equidistant at distance $r$ to the boundary. In Lemma \ref{lem_curve} here below we prove that
\begin{equation}\label{ineq_parallel}
\lambda_1(\Gamma_r,\hat A)\geq\lambda_1(\Gamma,\hat A)-\sqrt{\beta} r,
\end{equation}
for all $r\in[0,\epsilon_0)$, where $\lambda_1(\Gamma_r,\hat A)$ is the lowest eigenvalue of the curve $\Gamma_r$ with potential $\hat A$ (the restriction of the standard potential to the curve). The constant $\epsilon_0$ will be defined in the proof of Lemma \ref{lem_curve}. Inequality \eqref{ineq_parallel} is the main ingredient of the proof of Theorem \ref{lower_curve}.

Take a first eigenfunction $u$ of $\omega_h$ with $\|u\|_{L^2(\omega_h)}=1$. By the coarea formula
$$
\int_{\omega_h}|\nabla^Au|^2=\int_0^h\int_{\Gamma_r}|\nabla^Au|^2dsdr.
$$
Fix a point $p\in\Gamma_r$ and consider an orthonormal frame $(T,N)$ at $p$, where $T$ is tangent to $\Gamma_r$ and $N$ is normal to it. At $p$ we have:
$$
|\nabla^Au|^2=|\langle\nabla^Au,T\rangle|^2+|\langle\nabla^A u,N\rangle|^2\geq |\langle\nabla^Au,T\rangle|^2=|\nabla^{\hat A}u|^2
$$

We can then use the restriction of $u$ as a test-function for the magnetic Laplacian associated to the pair $(\Gamma_r,\hat A)$. This gives, using \eqref{ineq_parallel}
$$
\int_{\Gamma_r}|\nabla^{\hat A}u|^2 ds \geq \lambda_1(\Gamma_r,\hat A)\int_{\Gamma_r}|u|^2\,ds\geq (\lambda_1(\Gamma,\hat A)-\sqrt{\beta}r)\int_{\Gamma_r}|u|^2ds\geq (\lambda_1(\Gamma,\hat A)-\sqrt{\beta}h)\int_{\Gamma_r}|u|^2ds
$$
for all $r\in [0,h]$. Integrating on $[0,h]$ we obtain
$$
\int_{\omega_{h}}|\nabla^Au|^2\geq (\lambda_1(\Gamma,\hat A)-\sqrt{\beta}h)\int_{\omega_h}|u|^2=\lambda_1(\Gamma,\hat A)-\sqrt{\beta}h.
$$
The proof is complete by observing that $\int_{\omega_{h}}|\nabla^Au|^2=\lambda_1(\omega_h,\beta)$.

\smallskip

\end{proof}

\begin{lem}\label{lem_curve} 
Let $\Gamma$ be a closed simple curve which is the boundary of a smooth bounded domain $\Omega$ for which $\frac{\beta|\Omega|}{2\pi}\not\in\mathbb N$. Let $\Gamma_r:=\{x\in\Omega:{\rm dist}(x,\Gamma)=r\}$ be the equidistant at distance $r$ to the boundary. There exists $\epsilon_0>0$ depending  on $\Omega$ such that, for all $r\in(0,\epsilon_0)$ one has:
$$
\lambda_1(\Gamma_r,\hat A)\geq\lambda_1(\Gamma,\hat A)-\sqrt{\beta} r.
$$
\end{lem}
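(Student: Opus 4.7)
The plan is to use the explicit formula from Theorem \ref{thm1d} applied to both $\Gamma$ and the parallel curves $\Gamma_r$, and to Taylor-expand the resulting difference in $r$.

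I will first choose $\epsilon_0$ smaller than the injectivity radius of the normal exponential map at $\partial\Omega$, so that for $r\in(0,\epsilon_0)$ the equidistant $\Gamma_r$ is a smooth simple closed curve bounding a simply connected subdomain $\Omega_r=\Omega\setminus\overline{\omega_r}$. Theorem \ref{thm1d} then yields
\[
\lambda_1(\Gamma_r,\hat A)=\frac{4\pi^2}{|\Gamma_r|^2}\,f(\alpha_r),\qquad \lambda_1(\Gamma,\hat A)=\frac{4\pi^2}{|\Gamma|^2}\,f(\alpha),
\]
where $\alpha_r:=\beta|\Omega_r|/(2\pi)$, $\alpha:=\alpha_0$, and $f(t):=\min_{n\in\mathbb Z}(n-t)^2$. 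Both quantities now depend on $r$ only through the two scalar functions $r\mapsto|\Gamma_r|$ and $r\mapsto|\Omega_r|$.

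Next I will write down the tube asymptotics. From the coarea formula $|\omega_r|=\int_0^r|\Gamma_s|\,ds$ together with the Gauss--Bonnet theorem (total geodesic curvature of $\Gamma$ equals $2\pi$), I obtain
\[
|\Gamma_r|=|\Gamma|-2\pi r+O(r^2),\qquad|\Omega_r|=|\Omega|-r|\Gamma|+O(r^2),
\]
so that $\alpha_r=\alpha-\beta r|\Gamma|/(2\pi)+O(r^2)$. I then use the hypothesis $\alpha\notin\mathbb Z$ to replace $f$ in a neighborhood of $\alpha$ by the smooth function $(n_0-t)^2$, where $n_0$ is the unique nearest integer to $\alpha$. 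Shrinking $\epsilon_0$ if necessary so that $\alpha_r$ stays in $(n_0-\tfrac12,n_0+\tfrac12)$ for every $r\in(0,\epsilon_0)$, the map $r\mapsto\lambda_1(\Gamma_r,\hat A)$ becomes $C^\infty$, and a first-order Taylor expansion at $r=0$ produces
\[
\lambda_1(\Gamma_r,\hat A)-\lambda_1(\Gamma,\hat A)=\left[\frac{16\pi^3 f(\alpha)}{|\Gamma|^3}-\frac{2\pi\beta f'(\alpha)}{|\Gamma|}\right]r+O(r^2).
\]

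The main obstacle is to show that the leading bracket is bounded in absolute value by $\sqrt\beta$, so that the $O(r^2)$ remainder can then be absorbed by a further shrinking of $\epsilon_0$. The key observation is the a priori estimate $f(\alpha)\le|\Gamma|^2\beta/(16\pi^2)$, which is a direct rewriting of $\lambda_1(\Gamma,\hat A)\le\beta/4$ from Theorem \ref{reverse_curves}; together with the identity $|f'(\alpha)|=2\sqrt{f(\alpha)}$ valid on the chosen neighborhood of $\alpha$, this converts both summands in the bracket into quantities controlled by $\sqrt\beta$ and by purely geometric data of $\Omega$. Once this estimate is in place, a final adjustment of $\epsilon_0$ folds the remaining $O(r^2)$ term into the leading one, and the stated inequality $\lambda_1(\Gamma_r,\hat A)\ge\lambda_1(\Gamma,\hat A)-\sqrt\beta\,r$ follows for all $r\in(0,\epsilon_0)$.
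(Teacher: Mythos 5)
Your strategy coincides with the paper's: apply the explicit formula of Theorem \ref{thm1d} to $\Gamma$ and to the parallel curves $\Gamma_r$, use the first variation of $|\Gamma_r|$ and $|\Omega_r|$, lock the minimizing integer for small $r$ via $\frac{\beta|\Omega|}{2\pi}\notin\mathbb N$, and invoke $\lambda_1(\Gamma,\hat A)\le\beta/4$ from Theorem \ref{reverse_curves}. The only structural difference is that you Taylor-expand $\lambda_1(\Gamma_r,\hat A)$, whereas the paper writes exact monotonicity inequalities for $\sqrt{\lambda_1(\Gamma_r,\hat A)}$ (using $|\Gamma_r|=|\Gamma|-2\pi r$ and $|\Omega_r|\ge|\Omega|-|\Gamma|r$ exactly) and then squares, so it has no $O(r^2)$ remainder to absorb. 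Two small points in your version are fixable: the ``unique nearest integer'' $n_0$ does not exist when $\frac{\beta|\Omega|}{2\pi}\in\mathbb Z+\frac12$, which the hypothesis allows --- since $|\Omega_r|$ is strictly decreasing you must take $n_0=[\alpha]$ in that case (the paper does this through the half-open interval $(n,n+\frac12]$) --- and the absorption of the $O(r^2)$ term needs strict inequality in the first-order bound, which does hold because the positive summand $16\pi^3 f(\alpha)/|\Gamma|^3$ you discard is strictly positive.

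The genuine gap is the claim that the bracket is bounded in absolute value by $\sqrt\beta$. The two ingredients you cite give, for the dangerous (negative) summand,
\[
\frac{2\pi\beta|f'(\alpha)|}{|\Gamma|}=\frac{4\pi\beta\sqrt{f(\alpha)}}{|\Gamma|}=2\beta\sqrt{\lambda_1(\Gamma,\hat A)}\le 2\beta\cdot\frac{\sqrt\beta}{2}=\beta^{3/2},
\]
not $\sqrt\beta$; the two agree only for $\beta\le1$. The loss is real, and the inequality with $\sqrt\beta$ cannot be rescued by shrinking $\epsilon_0$: for $\Omega=B_{1/\sqrt\beta}$ one has $\frac{\beta|\Omega|}{2\pi}=\frac12\notin\mathbb N$ and $\lambda_1(\Gamma_r,\hat A)=\frac{\beta^2}{4}\left(\beta^{-1/2}-r\right)^2=\frac{\beta}{4}-\frac{\beta^{3/2}}{2}r+\frac{\beta^2}{4}r^2$, whose slope at $r=0$ is $-\beta^{3/2}/2<-\sqrt\beta$ as soon as $\beta>2$. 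You are in good company: the paper's own proof contains the same slip, since the inequalities $A(r)\ge A(0)-L(0)r$ and $L(r)\le L(0)$ yield, for the paper's $f(r)=\sqrt{\lambda_1(\Gamma_r,\hat A)}$ in the second case, the bound $f(r)\ge f(0)-\beta r$ rather than $f(r)\ge f(0)-r$ as written there. With $\beta^{3/2}r$ in place of $\sqrt\beta\, r$ both your argument and the paper's close correctly, and that weaker conclusion still suffices for the qualitative statements of Theorem \ref{lower_curve} and Example \ref{annuli0}.
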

\begin{proof}
Let $\delta$ be the injectivity radius of the normal exponential map; hence the distance function $\rho(x):={\rm dist}(x,\Gamma)$ is smooth for $\rho\leq\delta$. 

Let $\Omega_r=\{x\in\Omega: \rho(x)>r\}$: for $r\in [0,\delta)$, the curve $\Gamma_r=\partial\Omega_r$ is smooth. We set for brevity:
$$
A(r)=\abs{\Omega_r}, \quad L(r)=\abs{\Gamma_r},
$$
so that $A(0)=\abs{\Omega}, L(0)=\abs{\Gamma}$. Theorem \ref{thm1d} gives, for all $r\in [0,\delta)$:

\begin{equation}\label{lowest}
\lambda_1(\Gamma_r,\hat A)=\dfrac{4\pi^2}{L(r)^2}\min_{n\in\mathbb Z}\Big(n-\dfrac{\beta A(r)}{2\pi}\Big)^2.
\end{equation}

We recall the well-known facts that, on the interval $[0,\delta)$,  $A(r)$ is smooth and decreasing, $A'(r)=-L(r)$ and $L'(r)=-2\pi$, so that $A(r)=A(0)-L(0)r+\pi r^2$ (see e.g., \cite[\S 1.2]{tubes}). We will use the inequalities:
\begin{equation}\label{ine}
A(0)-L(0)r\leq A(r)\leq A(0)\quad\text{and}\quad L(r)\leq L(0).
\end{equation}
Since $\frac{\beta|\Omega|}{2\pi}\not\in\mathbb N$, we have from Theorem \ref{thm1d} that $\lambda_1(\Gamma, \hat A)>0$. In particular, there is a unique $n\in\mathbb N$ such that
\begin{equation}\label{cases}
\dfrac{\beta\abs{\Omega}}{2\pi}\in (n-\frac 12,n) \quad\text{or}\quad\dfrac{\beta\abs{\Omega}}{2\pi}\in (n, n+\frac 12].
\end{equation}
Since $\abs{\Omega_r}=A(r)$ is continuous and decreasing, there exists a positive $\epsilon_0\leq\delta$ for which the inequalities in \eqref{cases} continue to hold for the domain $\Omega_r$ for all $r\in [0,\epsilon_0)$, that is:
\begin{equation}\label{casestwo}
\dfrac{\beta A(r)}{2\pi}\in (n-\frac 12,n) \quad\text{or}\quad\dfrac{\beta A(r)}{2\pi}\in (n, n+\frac 12]
\end{equation}
and \eqref{lowest} gives:
\begin{equation}\label{casesthree}
\sqrt{\lambda_1(\Gamma_r,\hat A)}=
\begin{cases}
\dfrac{2\pi n-\beta A(r)}{L(r)}\, & {\rm if}\quad \dfrac{\beta A(r)}{2\pi}\in (n-\frac 12,n),\\
\dfrac{\beta A(r)-2\pi n}{L(r)}\, & {\rm if}\quad  \dfrac{\beta A(r)}{2\pi}\in (n, n+\frac 12]
\end{cases}
\end{equation}
Set for brevity $f(r)=\sqrt{\lambda_1(\Gamma_r,\hat A)}$. We use \eqref{ine} to see that, in the first case of \eqref{casesthree} we have immediately $f(r)\geq f(0)$, while in the second we obtain $f(r)\geq f(0)-r$. Squaring both sides of this last inequality we get
$$
f(r)^2>f(0)^2-2 f(0)r,
$$

From Theorem \ref{reverse_curves} we see that $f(0)=\sqrt{\lambda_1(\Gamma,\hat A)}\leq \frac {\sqrt{\beta}}2$ so that $f(r)^2>f(0)^2-\sqrt{\beta}r$, which is the assertion. 

\end{proof}

\begin{prop}\label{prop0}
Let $\Gamma$ be a closed simple curve which is the boundary of a smooth bounded domain $\Omega$ for which $\frac{\beta|\Omega|}{2\pi}\in\mathbb N$. Let $\omega_h:=\{x\in\Omega:{\rm dist}(x,\Gamma)<h\}$ be a small tubular neighborhood of $\Gamma$. Then
$$
\lim_{h\to 0^+}\lambda_1(\omega_h,\beta)=0.
$$
\end{prop}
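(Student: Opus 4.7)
The plan is to construct, under the integer-flux hypothesis $\tfrac{\beta|\Omega|}{2\pi}=n\in\mathbb N$, an explicit family of test functions $u_h$ on $\omega_h$ whose Rayleigh quotients vanish as $h\to 0^+$, so that the min-max principle \eqref{minmax} forces $\lambda_1(\omega_h,\beta)\to 0$. The idea is to lift the one-dimensional ground state on $\Gamma$, which has eigenvalue zero by Theorem \ref{thm1d}, to the tubular neighbourhood via a local gauge transformation in Fermi coordinates.

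First I would invoke Theorem \ref{thm1d}: the hypothesis $\tfrac{\beta|\Omega|}{2\pi}\in\mathbb N$ gives $\lambda_1(\Gamma,\hat A)=0$ with a unimodular eigenfunction $\psi_0$ satisfying $(\partial_s-i\hat A(s))\psi_0=0$ in the arclength parametrization $s\in[0,|\Gamma|]$. The integer flux $\int_\Gamma\hat A=\beta|\Omega|\in 2\pi\mathbb Z$ (Stokes) is precisely what guarantees that $\psi_0(s)=\exp\bigl(i\int_0^s\hat A\bigr)$ is single-valued on $\Gamma$.

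Next I would set up Fermi coordinates $(s,r)\in[0,|\Gamma|]\times[0,h]$ on $\omega_h$, valid for $h$ smaller than the inner injectivity radius of $\Gamma$; the Euclidean metric takes the form $dr^2+(1-r\kappa(s))^2 ds^2$ with $\kappa$ the geodesic curvature. Writing $A=A_s(s,r)\,ds+A_r(s,r)\,dr$, I would introduce the smooth, $|\Gamma|$-periodic in $s$ phase
$$\phi(s,r):=\int_0^r A_r(s,r')\,dr'$$
and define the test function
$$u_h(s,r):=\psi_0(s)\,e^{i\phi(s,r)},$$
which is well defined on $\omega_h$ with $|u_h|\equiv 1$. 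By construction $(\partial_r-iA_r)u_h=0$, killing the normal part of $\nabla^A u_h$. Using $dA=\beta(1-r\kappa)\,ds\wedge dr$ to integrate the relation $\partial_s A_r-\partial_r A_s=\beta(1-r\kappa)$, one finds $\partial_s\phi-A_s=-\hat A(s)+\beta r-\tfrac12\beta\kappa(s)r^2$; combined with $(\partial_s-i\hat A)\psi_0=0$ this yields
$$(\partial_s-iA_s)u_h=i\beta r\bigl(1-\tfrac12\kappa(s)r\bigr)u_h,$$
whence the pointwise bound $|\nabla^A u_h|^2\le C\beta^2 r^2$ for $h$ small enough that $\tfrac12\le 1-r\kappa(s)\le 2$ uniformly.

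Finally I would integrate against the volume element $(1-r\kappa)\,ds\,dr$: one obtains $\int_{\omega_h}|u_h|^2\ge\tfrac12|\Gamma|h$ and $\int_{\omega_h}|\nabla^A u_h|^2\le C'\beta^2|\Gamma|h^3$, hence $\lambda_1(\omega_h,\beta)\le C''\beta^2 h^2\to 0$, in fact with an explicit quadratic rate that complements the lower bound of Theorem \ref{lower_curve}. The main obstacle I anticipate is bookkeeping around the non-simply-connectedness of $\omega_h$: a global gauge transformation making $A$ vanish does not exist, and the whole point is that the integer-flux hypothesis is exactly the topological condition that makes the tangential eigenfunction $\psi_0$, and therefore the test function $u_h$, well defined, while the Fermi-coordinate gauge $\phi$ handles the normal direction uniformly in $h$.
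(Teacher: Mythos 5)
Your proof is correct, and while it follows the same underlying strategy as the paper --- build a unimodular test function on $\omega_h$ out of the zero-energy ground state $e^{i\int_0^s\hat A}$ of the boundary operator of Theorem \ref{thm1d}, the integer-flux hypothesis being exactly what makes it single-valued --- the gauge mechanics are genuinely different. The paper first passes to the Coulomb gauge $A'=A+d\phi$ with $\langle A',N\rangle=0$ on $\partial\Omega$ (which requires solving a harmonic Neumann problem on all of $\Omega$), extends $e^{i\int_0^s\hat A'}$ constantly along normals, checks that $\nabla^{A'}u=0$ holds exactly on $\Gamma$, and concludes by continuity that $\|\nabla^{A'}u\|_{L^\infty(\omega_h)}\to0$; this gives $\lambda_1(\omega_h,\beta)=o(1)$ with no explicit rate. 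You instead keep the original potential, fix the gauge only in the radial direction via the Fermi-coordinate phase $\phi(s,r)=\int_0^rA_r$, and compute the tangential defect exactly from $dA=\beta\,dv$; this is more computational but self-contained (no auxiliary PDE) and yields the quantitative bound $\lambda_1(\omega_h,\beta)\le C\beta^2h^2$, which complements the lower bound of Theorem \ref{lower_curve}. Two bookkeeping points are worth flagging but are harmless: the sign in $\partial_sA_r-\partial_rA_s=\pm\beta(1-r\kappa)$ depends on the orientation of the Fermi frame, and only the modulus of the tangential defect enters the Rayleigh quotient; and in the orthonormal coframe the tangential part of $|\nabla^Au_h|^2$ carries the factor $(1-r\kappa)^{-2}$, which your uniform requirement $\tfrac12\le1-r\kappa\le2$ already absorbs.
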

\begin{proof}
Let $\delta$ be the injectivity radius of the normal exponential map. From now on we shall assume $h\in(0,\delta)$.

Let $\phi:\Omega\to\mathbb R$ be the solution of
$$
\begin{cases}
\Delta\phi=0\,, & {\rm in\ }\Omega,\\
\langle\nabla u,N\rangle=-\langle A,N\rangle\,, & {\rm on\ }\partial\Omega.
\end{cases}
$$
Define $A'=A+d\phi$, where $A$ is the standard potential. We note that $dA'=dA=\beta$, ${\rm div} A'=0$, and $\langle A',N\rangle=0$ on $\partial\Omega$. The potential $A'$ is often called the {\it Coulomb gauge}. On $\Omega$, and on any $\omega_h$, $A$ and $A'$ differ by an exact one-form, hence $\lambda_1(\omega_h,\beta)=\lambda_1(\omega_h,A')$ for all $h\in(0,\delta)$. %On $\omega_h$ we consider a system of curvilinear coordinates $(s,t)\in\Gamma\times(0,h)$, and the change of variables $\Phi:\Gamma\times(0,h)\to\omega_h$ defined by $\Phi(s,t)=s-t N(s)$, where $N(s)$ is the outer unit normal to $\Omega$ at $s\in\Gamma$, $s$ is the arc-length parameter of $\Gamma$. In particular, $t$ is the distance of $\Phi(s,t)$ from $\Phi(s,0)$. 
Consider now the function $v:\Gamma\to\mathbb C$
$$
v(s):=e^{i\int_0^s\hat{A'}},
$$
where $s$ is the arc-length variable on $\Gamma$ and $\hat{A'}$ is the restriction of $A'$ to $\Gamma$. We define a test function $u$ on $\omega_h$ extending $v$ constantly in the normal direction to $\Gamma$. Namely, for $x\in\omega_h$, we set $u(x):=v(s(x))$, where $s(x)$ is the arc-length coordinate of the (unique) nearest point to $x$ on $\Gamma$. By construction $u$ is smooth on $\omega_h$, since $v(|\Gamma|)=e^{i\int_0^s\hat{A'}}=e^{i\beta|\Omega|}=e^{i2\pi n}$ for some $n\in\mathbb N$. Moreover, it does not depend on $h$. Let $p\in\Gamma$, and let $(T,N)$ be an orthonormal frame, where $T$ is tangent to $\Gamma$ at $p$, and $N$ is a unit normal to $\Gamma$ at $p$. Then, at any $p\in\Gamma$ we have $du(T)=iu\hat A'(T)$, so that $d^{A'}u(T)=0$. Moreover $d^{A'}u(N)=0$ since $A'(N)=0$. We conclude that $d^{A'}u=0$ on $\Gamma$, or, equivalently, $\nabla^{A'}u=0$ on $\Gamma$.

This implies that $\|\nabla^{A'}u\|_{L^{\infty}(\omega_h)}\to 0$ as $h\to 0^+$.
%We have that $u(|\Gamma|)=e^{i\int_0^s\hat{A'}}=e^{i\beta|\Omega|}=e^{i2\pi n}$ for some $n\in\mathbb N$, and therefore $u(0)=u(|\Gamma|)$. Hence $u(s)$ is a smooth test function. Let $(s,t)\in\Gamma\times(0,h)$, and let $(T(s,t),N(s,t))$ be a orthonormal frame at $\Phi(s,t)$. In particular, $T(s,t)$ is tangent to $\Gamma_t$, the parallel to $\Gamma$ at distance $t$, while $N(s,t)$ is parallel to $N(s)$, the outer unit normal. We have
%$$
%|\nabla^{A'}u|^2(s,t)=|\langle A'(s,t),N(s,t)\rangle|^2+|\hat{A'}(s,t)-\langle A',T(s,t)\rangle|^2
%$$
%This function is uniformly continuous on the closed set $\Gamma\times[0,h]$ for all $h\in(0,\delta)$, and moreover, by construction, $|\nabla^{A'}u|^2(s,0)=0$. Hence $\|\nabla^{A'}u\|^2_{L^{\infty}(\omega_h)}\to 0$ as $h\to 0^+$.  
From the min-max principle \eqref{minmax} we have
$$
\lambda_1(\omega_h,\beta)=\lambda_1(\omega_h,A')\leq\frac{\int_{\omega_h}|\nabla^{A'}u|^2}{\int_{\omega_h}|u|^2}\leq\frac{\|\nabla^{A'}u\|^2_{L^{\infty}(\omega_h)}|\omega_h|}{|\omega_h|}
$$
since $\int_{\omega_h}|u|^2=\int_{\omega_h}1=|\omega_h|$. The proof is concluded.
\end{proof}

%\end{comment}

\bibliography{biblioCPSmagnetic1}{}
\bibliographystyle{abbrv}

% ----------------------------------------------------------------

\end{document}